\documentclass[11pt]{article}
\usepackage[left=1in, right=1in, top=1in, bottom=1in]{geometry}
\usepackage{graphicx}
\usepackage{amsmath,amssymb,amsfonts,amsthm}
\usepackage{mathrsfs}
\usepackage[title]{appendix}
\usepackage{xcolor}
\usepackage{booktabs}
\usepackage{float}
\usepackage[labelfont=bf]{caption}
\usepackage{authblk}
\usepackage{hyperref}
\hypersetup{colorlinks=true,
	linkcolor=blue,
	citecolor=blue,
	urlcolor=blue
}

\title{\textbf{On the cumulative residual interval entropy of doubly truncated random variables}}

\author{Stathis Chadjiconstantinidis}
\author{Apostolos Bozikas\thanks{Corresponding author. Email: \texttt{bozikas@unipi.gr}}}

\affil{Department of Statistics and Insurance Science, University of Piraeus, 80 Karaoli and Dimitriou str., Piraeus, 18534, Greece}

\date{}

\newtheorem{theorem}{Theorem}
\newtheorem{proposition}{Proposition}
\newtheorem{lemma}{Lemma}
\newtheorem{corollary}{Corollary}
\newtheorem{example}{Example}
\newtheorem{remark}{Remark}
\newtheorem{definition}{Definition}

\begin{document}

\maketitle
	
\begin{abstract}
This paper introduces and studies a new uncertainty measure, the cumulative residual interval entropy (CRIE). Defined as the cumulative residual entropy of a doubly truncated (interval) continuous random variable, this measure has several applications when data fall between two points. The CRIE generalizes the cumulative residual entropy proposed by Rao et al. \cite{Rao2004} 
and the dynamic cumulative residual entropy proposed by Asadi and Zohrevand \cite{Asadi2007}.
~We establish some properties of the generalized hazard rate and the doubly truncated mean residual lifetime, which are useful for obtaining results for the CRIE. Furthermore, we provide several representations of the CRIE based on reliability measures, covariance, the relevation transform, and increasing transformations. Finally, upper and lower bounds, as well as monotonicity results for the CRIE, are provided.
\end{abstract}
	
\vspace{1em}
\noindent \textbf{Keywords:} Cumulative and dynamic cumulative residual entropy, interval entropy, doubly truncated random variable, generalized failure rate, doubly truncated mean residual lifetime.
	
\vspace{1em}
\noindent \textbf{MSC Classification:} 60E15, 62B10, 62N05, 94A17.

\section{Introduction} \label{sec1}

Entropy is a key concept in many areas, such as probability, statistics, actuarial science, communication theory, physics, and economics. In information theory, it quantifies the uncertainty of a random variable. For instance, if we are interested in studying the uncertainty of a system and/or a unit that starts its activity at time zero and continues to function regularly until it fails, then the famous Shannon’s entropy is an acceptable measure of uncertainty. 

Let $X$ be a non-negative, absolutely continuous random variable with distribution function $F_X (x)=\Pr(X\leq x)$, survival function $\overline{F}_X (x)=\Pr(X>x)=1-F_X(x)$ and probability density function $f_X (x)=F_X'(x)$. Shannon's entropy (also called as “differential entropy” in the continuous case) was introduced by \cite{Shannon1948} and defined as  
\begin{equation} \textstyle
	S(X)=-E[\ln f_X (X)]=-\int_0^\infty f_X (x)\ln f_X (x)\,dx,
\end{equation}
where “ln” denotes the natural logarithm with the convention $0\ln 0=0$. The Shannon entropy has certain disadvantages. For example, in order to estimate the Shannon entropy for a continuous density, one must obtain the density estimation, which is not a trivial task, and also, the Shannon entropy may take negative values. Rao et al. \cite{Rao2004} extended the Shannon entropy by introducing a novel measure of information known as the \textit{cumulative residual entropy} (CRE), defined by
\begin{equation} \textstyle
	\mathcal{E}(X)=-\int_0^\infty \overline{F}_X (x)\ln \overline{F}_X (x)\,dx,
\end{equation}
which is always non-negative and can easily be computed from sample data. For additional mathematical properties of $\mathcal{E}(X)$, see also Rao \cite{Rao2005}. A generalization of $\mathcal{E}(X)$ was obtained by Psarrakos and Navarro \cite{Psarrakos2013} who introduced and studied the generalized cumulative residual entropy. 

Whereas sometimes in the study of a system's lifetime, the initial age is non-zero and an individual’s interest is to study the uncertainty of the system, which is known to survive up to a certain age, or failed at a fixed time, or when it is known to fail between two time points, then $S(X)$ and $\mathcal{E}(X)$ are not suitable for uncertainty measures.  
In a situation where it is known that the system has survived up to a specific age, uncertainty is related to the residual distribution of the system as well as to a left truncation of system's lifetime. Let the left-truncated random variable $X_t=X\,|\,X>t$ having survival function  
$$ \textstyle \overline{F}_{X_t}(x)=\frac{\overline{F}_X(x)}{\overline{F}_X(t)}, \quad x>t. $$

Based on the entropy $\mathcal{E}(X)$ given by (2), Asadi and Zohrevand \cite{Asadi2007} introduced the \textit{dynamic cumulative residual entropy} at time $t$ of $X$, denoted by $\mathcal{E}(X;t)$ and defined as $\mathcal{E}(X;t)=\mathcal{E}(X_t)$, that is,  
\begin{equation} \textstyle
	\mathcal{E}(X;t)=-\int_t^\infty \frac{\overline{F}_X (x)}{\overline{F}_X (t)} \ln\left(\frac{\overline{F}_X (x)}{\overline{F}_X (t)}\right) dx.
\end{equation}
It is clear that $\mathcal{E}(X;0)=\mathcal{E}(X)$. Asadi and Zohrevand \cite{Asadi2007} obtained several properties for $\mathcal{E}(X;t)$ as well as some characterizations of lifetime distributions based on this measure. In addition, further results for $\mathcal{E}(X)$ and $\mathcal{E}(X;t)$ were obtained by \cite{Navarro2010}.  

Let $X_{(t)}=X-t\,|\,X>t$, $t\geq 0$, be the residual lifetime of $X$, having the survival function given by  
$$ \textstyle \overline{F}_{X_{(t)}} (x)=\frac{\overline{F}_X (x+t)}{\overline{F}_X (t)}, \quad x>0,
$$
provided that $\overline{F}_X(t)>0$. This random variable has important applications in several scientific fields. For example, in life insurance, actuaries utilize residual lifetime when determining life insurance premiums, mathematical reserves, and benefits. In this case, $X$ denotes the lifetime of a newborn and $X_{(t)}$ is the future lifetime of a person of age $t>0$. In non-life insurance, if $X$ represents the individual loss of an insurance portfolio and $t$ is an ordinary deductible, then $X_{(t)}$ denotes the payment-per-payment of the insurance company to the policyholder (see Klugman  et al. \cite{Klugman2019}). Ebrahimi \cite{Ebrahimi1996a} considered the differential entropy of $X_{(t)}$ which according to (1) is given by  
$$ \textstyle \mathcal{E}(X_{(t)})=-\int_0^\infty \overline{F}_{X_{(t)}}(x)\ln \overline{F}_{X_{(t)}}(x)\,dx.
$$
Since
\begin{align*}
	\mathcal{E}(X_{(t)}) &= \textstyle -\int_0^\infty \frac{\overline{F}_X(x+t)}{\overline{F}_X(t)} \ln \left(\frac{\overline{F}_X(x+t)}{\overline{F}_X(t)}\right) dx \\
	&= \textstyle -\int_t^\infty \frac{\overline{F}_X(x)}{\overline{F}_X(t)} \ln \left(\frac{\overline{F}_X(x)}{\overline{F}_X(t)}\right) dx,
\end{align*}
it follows that $\mathcal{E}(X_{(t)})=\mathcal{E}(X_t)$ (each equal to $\mathcal{E}(X;t)$), that is, the dynamic cumulative residual entropy at time $t$ of $X$ is equal to the differential entropy of the residual lifetime $X_{(t)}$ of $X$.  

In recent years, there has been growing interest in studying doubly truncated random variables in several scientific fields, such as survival studies and life testing, reliability, economy, astronomy, actuarial science, and various other fields. The subject of doubly truncation of a lifetime random variable in the reliability literature has been started by Navarro and Ruiz \cite{Navarro1996}. More recently, Kharazmi and Balakrishnan \cite{Kharazmi2021} introduced and studied the cumulative residual and relative cumulative residual Fisher information, establishing important properties and connections to measures of uncertainty under truncation, thereby further motivating the study of information measures for truncated lifetimes. If the random variable $X$ denotes the lifetime of a system, then the random variable 
$$ \textstyle X_{\tau_1,\tau_2} = X \mid \tau_1 \leq X \leq \tau_2, $$
is referred to as the \textit{doubly truncated random variable} with $(\tau_1,\tau_2)\in D_X$, where
$$ \textstyle D_X = \{(x,y) : F_X(x) < F_X(y)\}. $$
Hence, the random variable $X_{\tau_1,\tau_2}$ represents the lifetime of a system and/or a unit that was in working condition at age $\tau_1$, but out of order when observing at time $\tau_2$, that is, the lifetime of a system that fails in the interval $[\tau_1, \tau_2]$. For further results on this topic, we refer to Sankaran and Sunoj \cite{Sankaran2004} who defined and studied the expected value of doubly truncated random variables. Moreover, Sunoj et al. \cite{Sunoj2009} studied characterizations of lifetime distributions using conditional expectations of doubly truncated random variables, and Khorashadizadeh et al. \cite{Khorashadizadeh2012} investigated the doubly truncated mean residual and past lifetime.

In survival studies and life testing, significant attention has been paid to truncated lifetimes. Among the truncated study, the most commonly observed truncation is of doubly truncated situations. In many practical cases, data are available only within a limited range between two points. Therefore, it is important to analyze statistical measures, especially in fields such as information theory, reliability, actuarial science, and economics, when random variables are doubly truncated.
Hence, event times of units that lie within a specific time interval are only observed, and we cannot have access to information about the subjects outside of this interval. For example, in survival studies, one may often observe truncation situations while analyzing data with respect to time. Some instances of truncated situations include analysis of COVID-19 data on the time from infection to death, the time it takes for treatment to show a response, and the duration until the disease reoccurs.

As an example in reliability testing, a product may only be observed until a certain time point (upper truncation) after it has been in use for a given period (lower truncation), leading to doubly truncated data. A detailed understanding and proper analysis of these data are crucial to make an accurate assessment of the reliability of the system and making informed decisions about maintenance, product design, or risk management. In addition, final products are often subject to selection checks before being sent to the customer. The usual practice is that if the performance of a product falls within certain tolerance limits, it is referred as compatible and sent to the customer. If it fails, a product is rejected, and thus revoked. In this case, the actual distribution to the customer is called doubly (interval) truncated.

When studying income distribution, inequality is not measured only for incomes above or below a fixed value. It is also important to examine incomes within a given range, as well as incomes higher or lower than a specified level. For instance, it is interesting to examine the inequality of a population that excludes the richest and poorest members; hence doubly truncated populations are taken into consideration in many practical situations.

Doubly truncated random variables are widely used in actuarial science and especially are used to model insurance companies' compensation to policyholders. A “franchise deductible” (see Klugman et al. \cite{Klugman2019}) is a provision under which the insurance pays nothing when the loss is below the deductible amount, but pays the whole loss if it exceeds the deductible. Hence, if the random variable $X$ represents the individual (and / or the collective) loss of an insurance portfolio and $d > 0$ is the amount of the franchise deductible, then the random variable $Y_d^P$ representing the payment-per-payment is given by
{\small $$ Y_d^P = \begin{cases}
		\text{undefined}, & X \leq d \\
		X, & X > d
	\end{cases}, \quad \text{i.e., } Y_d^P = X \mid X > d.
	$$}

Suppose that losses are not recorded for or above a certain policy (liability) limit $u > 0$. For example, a home insurance policy covers losses up to a limit $u$, while the policyholder covers major losses that exceed the amount $u$. In this case, the random variable representing the payment of the insurance company is
{\small $$
	Z_u = \begin{cases}
		X, & X \leq u \\
		\text{undefined}, & X > u
	\end{cases}, \quad \text{i.e., } Z_u = X \mid X \leq u.
	$$}Therefore, in this case, the random variable $Z_u$ is truncated from above. It should be noted that this case is different from the loss censoring above. For the latter case, the insurance company's payment is the random variable $\min\{X, u\}$, and therefore when $X > u$, the company pays an amount equal to $u$. Now, suppose that the losses $X$ of an insurance portfolio are covered by a franchise deductible $d > 0$ and are not recorded for or above a policy limit $u > d$. Then, the payment of the insurance company to the policyholder is obviously a doubly truncated random variable $X_{d,u} = X \mid d < X \leq u$. Another example in insurance is that the claim time of a policyholder is doubly truncated between the policy’s start and maturity dates.

It can be easily shown that for all $(\tau_1, \tau_2) \in D_X$, the survival function $ \textstyle \overline{F}_{X_{\tau_1,\tau_2}}(x) = \Pr(X_{\tau_1,\tau_2} > x)$ and the probability density function $ \textstyle f_{X_{\tau_1,\tau_2}}(x) = -\frac{d}{dx} \overline{F}_{X_{\tau_1,\tau_2}}(x)$ of $X_{\tau_1,\tau_2}$ are given respectively by
$$ \textstyle 
\overline{F}_{X_{\tau_1,\tau_2}}(x) = \frac{\overline{F}_X(x) - \overline{F}_X(\tau_2)}{\overline{F}_X(\tau_1) - \overline{F}_X(\tau_2)}, \quad \tau_1 \leq x \leq \tau_2,
$$
$$ \textstyle
f_{X_{\tau_1,\tau_2}}(x) = \frac{f_X(x)}{F_X(\tau_2) - F_X(\tau_1)}, \quad \tau_1 \leq x \leq \tau_2.
$$

A measure of uncertainty for the random variable $X_{\tau_1,\tau_2}$, also known as the \textit{interval entropy} of $X$ in the interval $(\tau_1, \tau_2)$ is introduced by \cite{Sunoj2009} and is defined as 
\begin{align}
	S(X; \tau_1, \tau_2) &= \textstyle -\int_{\tau_1}^{\tau_2} f_{X_{\tau_1,\tau_2}}(x) \ln f_{X_{\tau_1,\tau_2}}(x) \, dx \notag \\
	&= \textstyle -\int_{\tau_1}^{\tau_2} \frac{f_X(x)}{F_X(\tau_2) - F_X(\tau_1)} \ln \left( \frac{f_X(x)}{F_X(\tau_2) - F_X(\tau_1)} \right) dx.
\end{align}
which is the Shannon entropy for $X_{\tau_1,\tau_2}$, i.e., $S(X;\tau_1,\tau_2) = S(X_{\tau_1,\tau_2})$. Clearly, it holds $S(X;t,\infty) = S(X;t)$ and $S(X;0,t) = \overline{S}(X;t)$. Some further properties of interval entropy were obtained by \cite{Misagh2012} and \cite{Moharana2020}. In addition, the weighted interval entropy was studied by \cite{Misagh2011}.

Based on the measures $\mathcal{E}(X)$ and $\mathcal{E}(X;t)$, in this paper we introduce the \textit{cumulative residual interval entropy} of the doubly truncated random variable $X$ in the interval $(\tau_1, \tau_2)$, denoted by $H(X; \tau_1, \tau_2)$ and defined as
$$ \textstyle H(X; \tau_1, \tau_2) = \mathcal{E}(X_{\tau_1,\tau_2}) = -\int_0^\infty \overline{F}_{X_{\tau_1,\tau_2}}(x) \ln \overline{F}_{X_{\tau_1,\tau_2}}(x) \, dx. $$
Hence, based on the definition of $\overline{F}_{X_{\tau_1,\tau_2}}(x)$ we give the following definition.

\begin{definition}
	Let $X$ be a non-negative absolutely continuous random variable. For all $(\tau_1,\tau_2)\in D_X$, the cumulative residual interval entropy (CRIE) of the doubly truncated random variable $X$ in the interval $(\tau_1,\tau_2)$, is defined as
	\begin{align}
		H(X; \tau_1, \tau_2) & = \textstyle -\int_{\tau_1}^{\tau_2} \overline{F}_{X_{\tau_1,\tau_2}}(x) \ln \overline{F}_{X_{\tau_1,\tau_2}}(x) \, dx \nonumber \\
		& = \textstyle-\int_{\tau_1}^{\tau_2} \frac{\overline{F}_X(x) - \overline{F}_X(\tau_2)}{\overline{F}_X(\tau_1) - \overline{F}_X(\tau_2)} \ln \left( \frac{\overline{F}_X(x) - \overline{F}_X(\tau_2)}{\overline{F}_X(\tau_1) - \overline{F}_X(\tau_2)} \right) dx.
	\end{align}
\end{definition}

From (5) it easily follows that $H(X; \tau_1, \tau_2)$ takes values in $[0, +\infty)$, i.e., the measure $H(X; \tau_1, \tau_2)$ is always non-negative. Also, it holds that $H(X; t, \infty) = \mathcal{E}(X; t)$ as well as $H(X; 0, \infty) = \mathcal{E}(X)$.

Let us consider the random variable $X_{(\tau_1, \tau_2)} = X - \tau_1 \mid \tau_1 \leq X \leq \tau_2$ which is the doubly truncated mean residual lifetime of $X$. Thus, $X_{(\tau_1,\tau_2)}$ represents the future lifetime of a unit and / or a system in the time interval $[\tau_1, \tau_2]$ given that it has survived to age $\tau_1$. The random variable $X_{(\tau_1,\tau_2)} \in (0, \tau_2 - \tau_1]$ and has survival function
$$ \textstyle
\overline{F}_{X_{(\tau_1,\tau_2)}}(x) = \Pr(X_{(\tau_1,\tau_2)} > x) = \frac{\overline{F}_X(x + \tau_1) - \overline{F}_X(\tau_2)}{\overline{F}_X(\tau_1) - \overline{F}_X(\tau_2)}, \quad 0 \leq x \leq \tau_2 - \tau_1. $$
Therefore, the CRE of the random variable $X_{(\tau_1,\tau_2)}$ according to (2) is
\begin{align*}
	\mathcal{E}(X_{(\tau_1,\tau_2)}) & = \textstyle -\int_0^\infty \overline{F}_{X_{(\tau_1,\tau_2)}}(x) \ln \overline{F}_{X_{(\tau_1,\tau_2)}}(x) \, dx \\
	& = \textstyle -\int_0^{\tau_2 - \tau_1} \frac{\overline{F}_X(x + \tau_1) - \overline{F}_X(\tau_2)}{\overline{F}_X(\tau_1) - \overline{F}_X(\tau_2)} \ln \left( \frac{\overline{F}_X(x + \tau_1) - \overline{F}_X(\tau_2)}{\overline{F}_X(\tau_1) - \overline{F}_X(\tau_2)} \right) dx,
\end{align*}
and since the last relationship can be equivalently rewritten as
$$ \textstyle
\mathcal{E}(X_{(\tau_1,\tau_2)}) = -\int_{\tau_1}^{\tau_2} \frac{\overline{F}_X(x) - \overline{F}_X(\tau_2)}{\overline{F}_X(\tau_1) - \overline{F}_X(\tau_2)} \ln \left( \frac{\overline{F}_X(x) - \overline{F}_X(\tau_2)}{\overline{F}_X(\tau_1) - \overline{F}_X(\tau_2)} \right) dx, $$
it follows that
$$ \textstyle \textstyle H(X;\tau_1,\tau_2 ) = \mathcal{E}(X_{(\tau_1,\tau_2)}) = \mathcal{E}(X_{\tau_1,\tau_2}). $$

\begin{remark}
	Many studies (see, e.g., \cite{Khorashadizadeh2013, Sekeh2015, Jalayeri2024}, among others) have referred to
	\( \textstyle
	\frac{\overline{F}_X(x)}{\overline{F}_X(\tau_1)-\overline{F}_X(\tau_2)}
	\)
	as the survival function of the truncated random variable \( \textstyle X_{\tau_1,\tau_2} = X \mid \tau_1 \le X \le \tau_2\), and on this basis defined the cumulative residual interval entropy as
	\[ \textstyle
	H_1(X; \tau_1, \tau_2) = -\int_{\tau_1}^{\tau_2}
	\frac{\overline{F}_X(x)}{\overline{F}_X(\tau_1)-\overline{F}_X(\tau_2)}
	\ln\!\left(
	\frac{\overline{F}_X(x)}{\overline{F}_X(\tau_1)-\overline{F}_X(\tau_2)}
	\right)\, dx.
	\]
	However, this interpretation is incorrect, as the expression
	\( \textstyle
	\frac{\overline{F}_X(x)}{\overline{F}_X(\tau_1)-\overline{F}_X(\tau_2)}
	\)
	is not associated with a valid survival function of a random variable. For this reason, it is more appropriate to consider \( \textstyle H_1(X; \tau_1, \tau_2)\) as a \emph{modified cumulative residual interval entropy}, in line with Definition~1 of \cite{Hashempour2024} for the corresponding extropies.
\end{remark}

\begin{remark}
	As pointed out by Di Crescenzo and Longobardi \cite{DiCrescenzo2009}, in many real-world applications uncertainty is not necessarily related to the future but rather to the past. This observation motivated the introduction of past entropy measures, which, unlike residual entropy measures, are defined in terms of the cumulative distribution function of the underlying random variable.
	Di Crescenzo and Longobardi \cite{DiCrescenzo2009} studied the \textit{cumulative past entropy} defined by 
	\[\textstyle
	\mathscr{CE}(X) =  - \int_{0}^{\infty} F_X(x) \ln F_X(x) \, dx,
	\]
	as well as the \textit{dynamic cumulative past entropy} given by
	\[ \textstyle
	\mathscr{CE}(X;t) =  - \int_{0}^{t} \frac{F_X(x)}{F_X(t)} \ln \frac{F_X(x)}{F_X(t)} \, dx.
	\]
	Also let \( \textstyle F_{X_{\tau_1,\tau_2}}(x) = \Pr(X_{\tau_1,\tau_2} \le x)	= \frac{F_X(x) - F_X(\tau_1)}{F_X(\tau_2) - F_X(\tau_1)} \)
	be the distribution function of $ X_{\tau_1,\tau_2} = X \mid \tau_1 \le X \le \tau_2$. Based on Definition 1, we can analogously define the \textit{cumulative past interval entropy} as
	\begin{align*}
		\overline{H}(X; \tau_1, \tau_2)
		& = \textstyle -\int_{\tau_1}^{\tau_2} F_{X_{\tau_1,\tau_2}}(x) \ln F_{X_{\tau_1,\tau_2}}(x) \, dx \nonumber \\
		& = \textstyle-\int_{\tau_1}^{\tau_2} \frac{F_X(x) - F_X(\tau_2)}{F_X(\tau_1) - F_X(\tau_2)} \ln \left( \frac{F_X(x) - F_X(\tau_2)}{F_X(\tau_1) - F_X(\tau_2)} \right) \, dx,
	\end{align*}
	for all $(\tau_1,\tau_2) \in D_X$.
	Also, we can easily observe that, by setting $Y = X - \tau_1$ and $t = \tau_2 - \tau_1$, we obtain
	\begin{align*} 
		\{ X \mid \tau_1 \le X \le \tau_2 \}
		&= \{ \tau_1 + Y \mid \tau_1 \le \tau_1 + Y \le \tau_2 \} \\
		&= \tau_1 + \{ Y \mid 0 \le Y \le \tau_2 - \tau_1 \} \\
		&= \tau_1 + \{ Y \mid 0 \le Y \le t \}.
	\end{align*}
	This relation shows that the uncertainty associated with $[X \mid \tau_1 \le X \le \tau_2]$ may be expressed in terms of the uncertainty of the shifted past lifetime $[Y \mid 0 \le Y \le t]$. Since the uncertainty of $Y_{(t)} = Y \mid 0 \le Y \le t$ is characterized by its cumulative distribution function, the above representation allows us to express $\overline{H}(X; \tau_1, \tau_2) $ through the cumulative past entropy denoted by $ \mathscr{CE}(Y; t) $. Indeed, since
	\[ \textstyle
	F_{X_{\tau_1,\tau_2}}(x)
	= \Pr(X_{\tau_1,\tau_2} \le x)
	= \Pr(Y \le x - \tau_1 \mid Y \le t)
	= \Pr(Y \le x^{\star} \mid Y \le t),
	\]
	we obtain
	\begin{align*}
		\mathscr{CE}(Y; t)
		&= \textstyle - \int_{0}^{t}
		F_{Y_{(t)}}(x^{\star}) \ln F_{Y_{(t)}}(x^{\star}) \, dx^{\star} \\
		&= \textstyle - \int_{\tau_1}^{\tau_2} F_{X_{\tau_1,\tau_2}}(x) \ln F_{X_{\tau_1,\tau_2}}(x) \, dx \\
		&= \overline{H}(X; \tau_1, \tau_2),
	\end{align*}
	where $x^{\star} = x - \tau_1$.
	
	Therefore, the cumulative past interval entropy $\overline{H}(X;\tau_1,\tau_2)$ coincides with the dynamic cumulative past entropy $\mathscr{CE}(X - \tau_1; \tau_2 - \tau_1)$ of the shifted random variable. Consequently, this quantity does not require a separate study. In contrast, the proposed measure $ H(X; \tau_1, \tau_2) $ cannot be derived from any existing entropy measure and therefore constitutes a novel measure deserving separate attention.
\end{remark}

In the sequel, we recall some concepts useful in classifying life distributions in reliability theory, as well as some concepts about the stochastic orders between random variables.

\begin{definition}
	Let $X$ be an absolutely continuous and non-negative random variable. Then $X$ has:
	\begin{itemize}
		\item[(i)] an increasing (decreasing) failure rate or IFR (DFR), if $h_X (x)$ is increasing (decreasing) in $x \geq 0$;
		\item[(ii)] an increasing (decreasing) mean residual lifetime or IMRL (DMRL), if $m_X (x)$ is increasing (decreasing) in $x \geq 0$;
		\item[(iii)] the new worse (better) than used in expectation or NWUE (NBUE) if $m_X (x) \geq (\leq) m_X (0)=\mathbb{E}(X)$.
	\end{itemize}
\end{definition}

Thus, the relations between the classes given in the above definition are
\[ \text{IFR (DFR)} \Rightarrow \text{DMRL (IMRL)} \Rightarrow \text{NBUE (NWUE)}. \]

\begin{definition}
	Let $X$ and $Y$ be two absolutely continuous non-negative random variables having distribution functions $\overline{F}_X (x)$ and $\overline{F}_Y (x)$, probability density functions $f_X (x)$ and $f_Y (x)$, and hazard rate functions $h_X (x)$ and $h_Y (x)$, respectively. Then, $X$ is smaller than $Y$ in
	\begin{itemize}
		\item[(i)] the usual stochastic order, denoted by $X \leq_{st} Y$, if $ \overline{F}_X (x) \leq \overline{F}_Y (x)$ for all $x \geq 0$;
		\item[(ii)] the hazard rate order, denoted by $X \leq_{hr} Y$, if $h_Y (x) \leq h_X (x)$ or equivalently, if $\frac{\overline{F}_Y (x)}{\overline{F}_X (x)}$ is increasing in $x \geq 0$;
		\item[(iii)] the likelihood ratio order, denoted by $X \leq_{lr} Y$, if $\textstyle \frac{f_Y (x)}{f_X (x)}$ is increasing in $x \geq 0$.
	\end{itemize}
\end{definition}
For the stochastic orders given in Definition 3, it is well known that the following relations are valid.
\[ \textstyle X \leq_{lr} Y \Rightarrow X \leq_{hr} Y \Rightarrow X \leq_{st} Y. \]

This paper aims to study the entropy measure $H(X; \tau_1, \tau_2)$. In this context, Section \ref{sec2} provides several properties satisfied by the generalized failure rate introduced by Navarro et al. \cite{Navarro1996} and the doubly truncated mean residual lifetime introduced by Ruiz and Navarro \cite{Ruiz1996}, which, apart from being useful in themselves, are also needed for our subsequent analysis. In Section \ref{sec3}, we give several alternative representations for $H(X; \tau_1, \tau_2)$, which are useful for obtaining bounds for this information measure. It is shown that CRIE is related to well-known reliability measures, especially with the expectation of doubly truncated mean residual lifetime. Also, a covariance as well as a revelation transform representation of CRIE are given, and the problem of the evaluation of CRIE under increasing transformations is considered. In Section \ref{sec4}, we obtain several upper and lower bounds for CRIE, and we obtain a relationship between the CRIE of two non-negative and absolutely continuous random variables ordered in the usual likelihood ratio order and/or ordered with respect to their generalized failure rates. Finally, in Section \ref{sec5}, we introduce two new classes of distribution, namely the increasing (decreasing) CRIE or ICRIE (DCRIE) and we give several necessary and sufficient conditions under which CRIE is monotone. As a result, some further bounds for CRIE are also obtained. In addition, it is shown that the ICRIE (DCRIE) class is closed under increasing linear transformations, and that the DCRIE class is closed under increasing convex transformations.

\section{Some results on the generalized failure rate and doubly truncated mean residual lifetime} \label{sec2}

Suppose that $X$ is a non-negative and absolutely continuous random variable, which usually represents the lifetime of a unit and / or a system. One measure that is time-dependent and determines an instantaneous rate of failure is the \textit{hazard (or failure)} rate (HR). This is one of the most important concepts in reliability theory, survival analysis, actuarial science and risk analysis, defined by 
\begin{equation*}
\textstyle	h_X(t) = \frac{f_X(t)}{\overline{F}_X(t)},
\end{equation*}
provided that $\overline{F}_X(t) > 0$.  

Another measure of ageing used frequently is the \textit{mean residual lifetime} (MRL) which represents the expected residual lifetime of a unit and/or a system of a specific age. The MRL at age $t \geq 0$ is defined by
\begin{equation*} \textstyle
	m_X(t) = E(X - t \mid X > t) = \frac{1}{\overline{F}_X(t)} \int_t^\infty \overline{F}_X(x) \, dx,
\end{equation*}
for $t$ such that $\overline{F}_X(t) > 0$.  

In analogy to the hazard rate, Navarro and Ruiz \cite{Navarro1996} introduced the concept of the \textit{generalized failure rate} (GFR) function for a doubly truncated random variable $X \mid \tau_1 \leq X \leq \tau_2$ where $(\tau_1, \tau_2) \in D_X$. The GFR function is defined by the following relationship.
\begin{equation} \textstyle
	h_{X,1}(\tau_1, \tau_2) = \lim_{dt \to 0^+} \left\{ \frac{\Pr(\tau_1 \leq X \leq \tau_1 + dt \mid \tau_1 \leq X \leq \tau_2)}{dt} \right\} = \frac{f_X(\tau_1)}{\overline{F}_X(\tau_1) - \overline{F}_X(\tau_2)},
\end{equation}
for all $(\tau_1, \tau_2) \in D_X$. Note that $h_{X,1}(\tau_1, \tau_2)$ is the failure rate of the random variable $X \mid \tau_1 \leq X \leq \tau_2$ at time $\tau_1$. Also, when $\tau_2 \to \infty$, it holds that $h_{X,1}(t, \infty) = h_X(t)$. It should be noted that \cite{Navarro1996} also defined the following GFR.
\begin{equation*} \textstyle
	h_{X,2}(\tau_1, \tau_2) = \lim_{dt \to 0^-} \left\{ \frac{\Pr(\tau_2 + dt \leq X \leq \tau_2 \mid \tau_1 \leq X \leq \tau_2)}{dy} \right\} = \frac{f_X(\tau_2)}{\overline{F}_X(\tau_1) - \overline{F}_X(\tau_2)},
\end{equation*}
for all $(\tau_1, \tau_2) \in D_X$.  

Let $ \textstyle m_{X,1}(\tau_1, \tau_2) = E(X - \tau_1 \mid \tau_1 \leq X \leq \tau_2) = \frac{1}{\overline{F}_X(\tau_1) - \overline{F}_X(\tau_2)} \int_{\tau_1}^{\tau_2} (x - \tau_1) f_X(x) \, dx, $
denote the \textit{doubly truncated mean residual lifetime} of $X$ (see, e.g., \cite{Ruiz1996}). Then, it can be shown that 
\begin{equation} \textstyle
	m_{X,1}(\tau_1, \tau_2) = \frac{1}{\overline{F}_X(\tau_1) - \overline{F}_X(\tau_2)} \left\{ \int_{\tau_1}^{\tau_2} \overline{F}_X(x) \, dx - (\tau_2 - \tau_1) \overline{F}_X(\tau_2) \right\},
\end{equation}
and
\begin{equation} \textstyle
	m_{X,1}(\tau_1, \tau_2) = \int_{\tau_1}^{\tau_2} \frac{\overline{F}_X(x) - \overline{F}_X(\tau_2)}{\overline{F}_X(\tau_1) - \overline{F}_X(\tau_2)} \, dx.
\end{equation}
It can be directly verified that the following relationship between $h_{X,1}(\tau_1, \tau_2)$ and $m_{X,1}(\tau_1, \tau_2)$ holds. 
\begin{equation}
	h_{X,1}(\tau_1, \tau_2) = \frac{1 + \frac{\partial m_{X,1}(\tau_1, \tau_2)}{\partial \tau_1}}{m_{X,1}(\tau_1, \tau_2)}.
\end{equation}

In addition, Ruiz and Navarro \cite{Ruiz1996} introduced the \textit{doubly truncated mean past lifetime} of $X$, which is defined by $m_{X,2}(\tau_1, \tau_2) = E(\tau_2 - X \mid \tau_1 \leq X \leq \tau_2)$. The $m_{X,1}(\tau_1, \tau_2)$ can be interpreted as the expected additional lifetime for a unit that was functioning at an age $\tau_1$ and ceased to function before an age $\tau_2$, and $m_{X,2}(\tau_1, \tau_2)$ as the inactivity expected time for this unit.  

It is well-known that both $h_{X,1}(\tau_1, \tau_2)$ and $h_{X,2}(\tau_1, \tau_2)$ univocally determine the distribution function of $X$. This is also true for the functions $m_{X,1}(\tau_1, \tau_2)$ and $m_{X,2}(\tau_1, \tau_2)$.  

The \textit{doubly truncated mean function} for all $(\tau_1, \tau_2) \in D_X$ is defined by 
\begin{equation*} \textstyle
	\mu_X(\tau_1, \tau_2) = E\{X \mid \tau_1 \leq X \leq \tau_2\} = \frac{1}{F_X(\tau_2) - F_X(\tau_1)} \int_{\tau_1}^{\tau_2} x f_X(x) \, dx,
\end{equation*}
and from the definitions of $m_{X,1}(\tau_1, \tau_2)$ and $m_{X,2}(\tau_1, \tau_2)$ it holds that
\begin{equation}
	\mu_X(\tau_1, \tau_2) = \tau_1 + m_{X,1}(\tau_1, \tau_2) = \tau_2 - m_{X,2}(\tau_1, \tau_2).
\end{equation}

In the sequel, we give several results regarding the generalized hazard rate $h_{X,1}(t, \tau_2)$ and the doubly truncated mean residual lifetime $m_{X,1}(\tau_1, \tau_2)$, which, as stated previously, apart from being useful in themselves, are also needed for our subsequent analysis.

\begin{lemma}
	For any $(\tau_1, \tau_2) \in D_X$ it holds
	
	\noindent (i) \begin{equation} \textstyle \frac{\overline{F}_X(x) - \overline{F}_X(\tau_2)}{\overline{F}_X(\tau_1) - \overline{F}_X(\tau_2)} = \exp\left\{ -\int_{\tau_1}^x h_{X,1}(t, \tau_2) \, dt \right\}  \ \text{and,} \end{equation}
	
	\noindent (ii) \begin{equation} \textstyle m_{X,1}(\tau_1, \tau_2) = \int_{\tau_1}^{\tau_2} \exp\left\{ -\int_{\tau_1}^x h_{X,1}(t, \tau_2) \, dt \right\} \, dx. \end{equation}
\end{lemma}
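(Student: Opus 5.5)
The plan is to treat part (i) as the doubly truncated analogue of the classical identity $\overline{F}_X(x)=\exp\{-\int_0^x h_X(t)\,dt\}$, and then to obtain part (ii) directly from (i) together with representation (8) of $m_{X,1}(\tau_1,\tau_2)$.

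For (i), fix $\tau_2$ and set $G(t)=\overline{F}_X(t)-\overline{F}_X(\tau_2)$ for $t\in[\tau_1,\tau_2)$. Since $(\tau_1,\tau_2)\in D_X$ and $\overline{F}_X$ is nonincreasing, we have $G(t)\ge 0$. On the set where $G(t)>0$ the logarithm is defined, and by absolute continuity
\[ \frac{d}{dt}\ln G(t) = \frac{-f_X(t)}{\overline{F}_X(t)-\overline{F}_X(\tau_2)} = -h_{X,1}(t,\tau_2), \]
where the last equality uses definition (6) with $\tau_1$ replaced by $t$. Integrating from $\tau_1$ to $x$ gives $\ln G(x)-\ln G(\tau_1)=-\int_{\tau_1}^x h_{X,1}(t,\tau_2)\,dt$. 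Exponentiating yields (11).

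The only delicate point is the range of $x$. Formula (6) requires $(t,\tau_2)\in D_X$, that is $F_X(t)<F_X(\tau_2)$. If $G$ reaches $0$ at some $x_0<\tau_2$, then $G\equiv 0$ on $[x_0,\tau_2]$, because $G$ is monotone and $G(\tau_2)=0$. In that region I will argue by continuity. As $x\uparrow x_0$ we have $\ln G(x)\to-\infty$, so the integral of $h_{X,1}$ diverges to $+\infty$ and the exponential tends to $0$. Beyond $x_0$ we adopt the convention $\exp\{-\infty\}=0$, so both sides of (11) equal $0$ there. The same argument applies at $x=\tau_2$ itself.

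Since $f_X$ is only a density, $\ln G$ should be handled as an absolutely continuous function and integrated in that sense, rather than differentiated pointwise everywhere. I will note this in the write-up and otherwise regard it as routine.

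For (ii), substitute (11) into the integrand of (8), which gives (12) immediately. I expect no real obstacle in the proof. The main care needed is the boundary issue in (i) just described: $G$ may vanish before $\tau_2$, and $h_{X,1}(t,\tau_2)$ blows up as $t\to\tau_2$.
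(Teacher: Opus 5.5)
Your proposal is correct and follows the paper's own proof: part (i) integrates the logarithmic derivative of $\overline{F}_X(t)-\overline{F}_X(\tau_2)$ from $\tau_1$ to $x$ and exponentiates, and part (ii) substitutes (11) into (8). Two points of yours are sound refinements that the paper leaves implicit: handling $\ln G$ as an absolutely continuous function on intervals where $G$ is bounded away from zero, and treating the case where $G$ vanishes before $\tau_2$ via continuity and the convention $\exp\{-\infty\}=0$.
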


\begin{proof} (i) We have  
	$$ \textstyle h_{X,1}(t, \tau_2) = -\frac{(\overline{F}_X(t) - \overline{F}_X(\tau_2))'}{\overline{F}_X(t) - \overline{F}_X(\tau_2)}, $$
	and thus  
	$$ \textstyle \int_{\tau_1}^x \frac{(\overline{F}_X(t) - \overline{F}_X(\tau_2))'}{\overline{F}_X(t) - \overline{F}_X(\tau_2)} \, dt = -\int_{\tau_1}^x h_{X,1}(t, \tau_2) \, dt, $$
	which yields  
	$$ \textstyle \ln[\overline{F}_X(x) - \overline{F}_X(\tau_2)] - \ln[\overline{F}_X(\tau_1) - \overline{F}_X(\tau_2)] = -\int_{\tau_1}^x h_{X,1}(t, \tau_2) \, dt. $$
	Therefore, we get  
	$$ \textstyle \ln\left( \frac{\overline{F}_X(x) - \overline{F}_X(\tau_2)}{\overline{F}_X(\tau_1) - \overline{F}_X(\tau_2)} \right) = -\int_{\tau_1}^x h_{X,1}(t, \tau_2) \, dt, $$
	from which we immediately obtain (11).
	
	\noindent (ii) By integrating both sides of (11) from $\tau_1$ to $\tau_2$ and using (8), we obtain (12). 
\end{proof}

\begin{lemma}
	(i) The generalized failure rate $h_{X,1}(x,y)$ is increasing (decreasing) in $x$ for any fixed value of $y$, if and only if the function
	$$ \textstyle \frac{\overline{F}_X (x+t)-\overline{F}_X (y)}{\overline{F}_X (x)-\overline{F}_X (y)}, \quad t>0 $$
	is decreasing (increasing) in $x$. \\
	(ii) If $h_{X,1}(x,y)$ increases (decreases) in $x$ for any fixed value of $y$, then $m_{X,1}(x,y)$ decreases (increases) in $x$.
\end{lemma}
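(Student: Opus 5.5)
Both parts will follow from the exponential representation in Lemma 1(i), equation (11).

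For part (i), fix $y$ and apply (11) twice, once with $\tau_1=x$ and once with $\tau_1=x+t$, and take the ratio (or use (11) directly with lower limit $x$ and upper point $x+t$). This gives
\[
\frac{\overline{F}_X(x+t)-\overline{F}_X(y)}{\overline{F}_X(x)-\overline{F}_X(y)}=\exp\Bigl\{-\int_x^{x+t} h_{X,1}(u,y)\,du\Bigr\}=\exp\Bigl\{-\int_0^{t} h_{X,1}(x+s,y)\,ds\Bigr\},
\]
valid for $x<x+t\le y$ with $(x,y)\in D_X$.

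\emph{Forward direction.} If $h_{X,1}(\cdot,y)$ is increasing, then for each $s$ the integrand $h_{X,1}(x+s,y)$ is increasing in $x$. Hence the integral is increasing in $x$, and the ratio is decreasing in $x$. The decreasing case is symmetric.

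\emph{Converse.} Suppose the ratio is decreasing in $x$ for every $t>0$. Then $-\frac1t\ln(\text{ratio})=\frac1t\int_x^{x+t}h_{X,1}(u,y)\,du$ is increasing in $x$ for every $t$. Letting $t\to0^+$, at continuity points of $f_X$ (where $h_{X,1}$ is continuous in its first argument) this average tends to $h_{X,1}(x,y)$. A pointwise limit of increasing functions is increasing, so $h_{X,1}(\cdot,y)$ is increasing.

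The main care point is this limiting argument. It needs either continuity of $f_X$ or an almost-everywhere interpretation, and I would state that mild regularity assumption explicitly. An alternative is to differentiate the log-ratio in $x$, which gives $h_{X,1}(x,y)-h_{X,1}(x+t,y)\le 0$ directly.

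For part (ii), write (12) with $\tau_1=x$, $\tau_2=y$ and substitute $u=x+s$:
\[
m_{X,1}(x,y)=\int_0^{y-x}\exp\Bigl\{-\int_0^{s}h_{X,1}(x+v,y)\,dv\Bigr\}\,ds .
\]
Assume $h_{X,1}(\cdot,y)$ is increasing and take $x<x'$. Two effects act in the same direction:
\begin{itemize}
\item The integrand at $x'$ is pointwise at most the integrand at $x$. This is exactly part (i), since the integrand equals the ratio with $t=s$.
\item The range of integration $[0,y-x']$ is shorter than $[0,y-x]$, and the integrand is nonnegative.
\end{itemize}
Hence $m_{X,1}(x',y)\le m_{X,1}(x,y)$, so $m_{X,1}(\cdot,y)$ is decreasing.

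For the decreasing-$h$ case the two effects conflict, because the integrand grows while the range shrinks. There I would instead use (9), written as $\partial_x m_{X,1}=h_{X,1}m_{X,1}-1$, and try to show $h_{X,1}(x,y)\,m_{X,1}(x,y)\le1$. Since $h_{X,1}(u,y)\le h_{X,1}(x,y)$ for $u\ge x$, we have $\exp\{-\int_x^{u}h_{X,1}\}\ge e^{-h_{X,1}(x,y)(u-x)}$. This lower bound goes the wrong way, so I would instead bound $h_{X,1}(x,y)\,m_{X,1}(x,y)$ by writing $m_{X,1}$ as an integral and comparing the density-weighted terms.

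This decreasing-$h$ direction of (ii) is the step I expect to be the real obstacle. It may require exploiting that truncation at $y$ forces $\overline{F}_X(u)-\overline{F}_X(y)\to0$, and I would check the claim first on simple examples before committing to a proof.
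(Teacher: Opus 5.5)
Your part (i) is correct. It is actually more careful than the paper's argument, which only passes to the limit $t\to0^+$ and so really justifies just your ``converse'' direction; you correctly get the forward direction from (11). The increasing-$h$ half of (ii) is also correct: you compare $m_{X,1}(x,y)$ and $m_{X,1}(x',y)$ directly, whereas the paper differentiates.

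The genuine gap is the decreasing-$h$ half of (ii). You leave it open, and the plan you sketch aims at the wrong inequality. By (9), $\partial_x m_{X,1}=h_{X,1}m_{X,1}-1$, so to show that $m_{X,1}(\cdot,y)$ \emph{increases} you need $h_{X,1}(x,y)\,m_{X,1}(x,y)\ge 1$, not $\le 1$. Your bound $\exp\{-\int_x^u h_{X,1}(v,y)\,dv\}\ge e^{-h_{X,1}(x,y)(u-x)}$ points the right way for this. However, integrating it over $[x,y]$ only gives $h_{X,1}(x,y)\,m_{X,1}(x,y)\ge 1-e^{-h_{X,1}(x,y)(y-x)}$, because the comparison exponential does not vanish at $y$.

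The missing idea is that the integrand vanishes at the moving endpoint, so shrinking the range costs nothing to first order. The paper writes $m_{X,1}(x,y)=\int_0^{y-x}R(x,t)\,dt$ with $R(x,t)=\frac{\overline{F}_X(x+t)-\overline{F}_X(y)}{\overline{F}_X(x)-\overline{F}_X(y)}$ and differentiates under the integral sign. The boundary term is $-R(x,y-x)=0$, so $\partial_x m_{X,1}(x,y)=\int_0^{y-x}\partial_x R(x,t)\,dt$, whose sign is dictated by (i) in both cases at once.

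In your route via (9), the same fact enters as the exact normalization $\int_x^y h_{X,1}(u,y)\exp\{-\int_x^u h_{X,1}(v,y)\,dv\}\,du=\int_x^y \frac{f_X(u)}{\overline{F}_X(x)-\overline{F}_X(y)}\,du=1$. Bounding $h_{X,1}(u,y)\le h_{X,1}(x,y)$ inside this integral immediately gives $1\le h_{X,1}(x,y)\,m_{X,1}(x,y)$.

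Your suspicion about this case is also well founded. In your own bound, let $u$ increase to $y$ (or to the first point where $\overline{F}_X$ reaches $\overline{F}_X(y)$). The left side tends to $0$ by (11), while the right side stays above $e^{-h_{X,1}(x,y)(y-x)}>0$. Hence for finite $y$ the GFR can never be decreasing in $x$; it blows up at the right end, e.g.\ $1/(y-x)$ for the uniform and $\lambda/(1-e^{-\lambda(y-x)})$ for the exponential. So the decreasing half of (ii), though true, is vacuous, which gives yet another way to close the gap.
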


\begin{proof}
	(i) From the definition of $h_{X,1}(x,y)$ we have that
	\begin{align*}
		h_{X,1}(x,y) & = \textstyle \lim_{t \to 0^+} \left\{ \frac{1}{t} \Pr(x \leq X \leq x+t \mid x \leq X \leq y) \right\} \\
		& = \textstyle \lim_{t \to 0^+} \left\{ \frac{1}{t} \cdot \frac{\Pr(x \leq X \leq x+t)}{\Pr(x \leq X \leq y)} \right\} \\
		& = \textstyle \lim_{t \to 0^+} \left\{ \frac{1}{t} \cdot \frac{\Pr(x \leq X \leq y)-\Pr(x+t \leq X \leq y)}{\Pr(x \leq X \leq y)} \right\} \\
		& = \textstyle \lim_{t \to 0^+} \left\{ \frac{1}{t} \left(1 - \frac{\Pr(x+t \leq X \leq y)}{\Pr(x \leq X \leq y)} \right) \right\},
	\end{align*} 
	and hence $h_{X,1}(x,y)$ is increasing (decreasing) in $x$ if and only if the function 
	$$ \textstyle \frac{\Pr(x+t \leq X \leq y)}{\Pr(x \leq X \leq y)}
	$$
	is decreasing (increasing) in $x$. Thus, the proof is completed.
	
	\noindent (ii) From (8) we get
	$$ \textstyle m_{X,1}(x,y) = \int_x^{\tau_2} \frac{\overline{F}_X(z)-\overline{F}_X(y)}{\overline{F}_X(x)-\overline{F}_X(y)} dz = \int_0^{\tau_2 - x} \frac{\overline{F}_X(x+t)-\overline{F}_X(y)}{\overline{F}_X(x)-\overline{F}_X(y)} dt, $$
	and thus
	\begin{align*}
		\frac{\partial m_{X,1}(x,y)}{\partial x} & = \textstyle -\frac{\overline{F}_X(x+y-x)-\overline{F}_X(y)}{\overline{F}_X(x)-\overline{F}_X(y)} + \int_0^{\tau_2 - x} \frac{\partial}{\partial x} \left( \frac{\overline{F}_X(x+t)-\overline{F}_X(y)}{\overline{F}_X(x)-\overline{F}_X(y)} \right) dt \\
		& = \textstyle \int_0^{\tau_2 - x} \frac{\partial}{\partial x} \left( \frac{\overline{F}_X(x+t)-\overline{F}_X(y)}{\overline{F}_X(x)-\overline{F}_X(y)} \right) dt.
	\end{align*}
	
	Therefore, if $h_{X,1}(x,y)$ is increasing (decreasing) in $x$, from (i) it follows that $ \frac{\partial m_{X,1}(x,y)}{\partial x} \leq (\geq) 0 $ and hence the result follows.
\end{proof}

\begin{lemma}
	Let $X$ and $Y$ be two non-negative absolutely continuous random variables with finite generalized failure rates $h_{X,1}(\tau_1,\tau_2)$ and $h_{Y,1}(\tau_1,\tau_2)$, respectively. If for any $(\tau_1,\tau_2)\in D_X\cap D_X$ it holds $h_{X,1}(\tau_1,\tau_2)\leq (\geq) h_{Y,1}(\tau_1,\tau_2)$, then for any increasing (decreasing) function $\varphi$
	\begin{equation}
		E[\varphi(Y) \mid \tau_1 < Y \leq \tau_2 ] \leq (\geq) E[\varphi(X) \mid \tau_1 \leq X \leq \tau_2].
	\end{equation}
\end{lemma}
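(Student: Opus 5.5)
The idea is to turn the inequality between generalized failure rates into a usual stochastic order between the doubly truncated variables, and then use the standard fact that the usual order is preserved by expectations of monotone functions. Fix $(\tau_1,\tau_2)$ and consider the doubly truncated variables $X_{\tau_1,\tau_2}$ and $Y_{\tau_1,\tau_2}$. By Lemma 1(i), for every $x\in[\tau_1,\tau_2]$,
\[ \textstyle
\overline{F}_{X_{\tau_1,\tau_2}}(x)=\exp\left\{-\int_{\tau_1}^x h_{X,1}(t,\tau_2)\,dt\right\},\qquad
\overline{F}_{Y_{\tau_1,\tau_2}}(x)=\exp\left\{-\int_{\tau_1}^x h_{Y,1}(t,\tau_2)\,dt\right\}.
\]
The hypothesis is applied at every pair $(t,\tau_2)$ with $\tau_1\le t<\tau_2$. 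Suppose $h_{X,1}(t,\tau_2)\le h_{Y,1}(t,\tau_2)$ for all such $t$. Integrating gives $\overline{F}_{Y_{\tau_1,\tau_2}}(x)\le \overline{F}_{X_{\tau_1,\tau_2}}(x)$ on $[\tau_1,\tau_2]$. Both functions equal $1$ below $\tau_1$ and $0$ above $\tau_2$, so $Y_{\tau_1,\tau_2}\le_{st}X_{\tau_1,\tau_2}$. The reversed case is symmetric.

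Next I pass to expectations. For increasing $\varphi$ I would use $E[\varphi(Z)]=\varphi(\tau_1)+\int_{\tau_1}^{\tau_2}\overline{F}_Z(x)\,d\varphi(x)$, valid for $Z$ supported on $[\tau_1,\tau_2]$. This follows by Fubini, or by integration by parts when $\varphi$ is differentiable. Applying it to $Y_{\tau_1,\tau_2}$ and $X_{\tau_1,\tau_2}$ and using $d\varphi\ge0$ yields (13). For decreasing $\varphi$, apply the same identity to $-\varphi$. In the $\ge$ case of the hypothesis the order is reversed, and the two sign flips combine to give the stated conclusion. Alternatively, I would realize both variables on one probability space via quantile coupling, $Y'=F_Y^{-1}(U)\le X'=F_X^{-1}(U)$, and apply $\varphi$ pointwise.

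The main obstacle is bookkeeping rather than analysis. Lemma 1(i) at a point $x$ uses the rates $h_{\cdot,1}(t,\tau_2)$ for all $t\in[\tau_1,x]$. So the hypothesis must hold on $D_X\cap D_Y$ (the statement's ``$D_X\cap D_X$'' is presumably a typo) for every left endpoint between $\tau_1$ and $\tau_2$, not just at the given pair; I will state this reading explicitly. Two technical points also need care. First, the conditioning $\tau_1<Y$ versus $\tau_1\le X$ makes no difference by absolute continuity. Second, the expectations must be finite, or else $\varphi$ should be taken bounded on $[\tau_1,\tau_2]$ as a first step.
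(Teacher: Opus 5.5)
\textbf{Main case.} Your argument for $h_{X,1}\le h_{Y,1}$ with $\varphi$ increasing is correct and is essentially the paper's. The paper integrates by parts to get $E[\varphi(X)\mid \tau_1\le X\le\tau_2]=\varphi(\tau_1)+\int_{\tau_1}^{\tau_2}\varphi'(x)\,\overline{F}_{X_{\tau_1,\tau_2}}(x)\,dx$, then compares the exponentials from Lemma 1(i) using $\varphi'\ge 0$. Your Stieltjes/Fubini (or quantile-coupling) version is the same comparison phrased as $Y_{\tau_1,\tau_2}\le_{st}X_{\tau_1,\tau_2}$. It has the small advantage of not needing $\varphi$ to be differentiable.

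\textbf{The gap.} It is the sentence ``the two sign flips combine to give the stated conclusion.'' Two sign flips cancel. If $h_{X,1}\ge h_{Y,1}$, Lemma 1(i) gives $\overline{F}_{X_{\tau_1,\tau_2}}\le\overline{F}_{Y_{\tau_1,\tau_2}}$, i.e.\ $X_{\tau_1,\tau_2}\le_{st}Y_{\tau_1,\tau_2}$. For decreasing $\varphi$ this yields $E[\varphi(Y)\mid\tau_1<Y\le\tau_2]\le E[\varphi(X)\mid\tau_1\le X\le\tau_2]$. That is the same direction as the unparenthesized case, not the asserted $\ge$.

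In fact the parenthetical claim is false. Take $X$ uniform on $[0,2]$ and $Y$ with density $y/2$ on $[0,2]$. For $0\le\tau_1<\tau_2\le 2$ one has $h_{X,1}(\tau_1,\tau_2)=1/(\tau_2-\tau_1)\ge 2\tau_1/(\tau_2^2-\tau_1^2)=h_{Y,1}(\tau_1,\tau_2)$; for $\tau_2>2$ the same holds with $\tau_2$ replaced by $2$. Yet with $\tau_1=0$, $\tau_2=1$ and $\varphi(x)=-x$, you get $E[\varphi(Y)\mid 0<Y\le 1]=-2/3<-1/2=E[\varphi(X)\mid 0\le X\le 1]$.

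The paper's proof contains the identical slip. Multiplying the reversed inequality $\exp\{-\int h_{X,1}\}\le\exp\{-\int h_{Y,1}\}$ by $\varphi'\le 0$ gives $\ge$, not the $\le$ it writes. So the statement cannot be proved as written.

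What your method does establish is $E[\varphi(Y)\mid\cdot]\le E[\varphi(X)\mid\cdot]$ when $h_{X,1}\le h_{Y,1}$ and $\varphi$ is increasing. The reverse inequality holds when exactly one of the two (the rate comparison or the monotonicity of $\varphi$) is reversed. You should have flagged this rather than asserting the parenthetical case.
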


\begin{proof}
	We have
	{\small \begin{align*}
			& \textstyle E[\varphi(X) \mid \tau_1 \leq X \leq \tau_2] \\
			& = \textstyle \frac{1}{\overline{F}_X(\tau_1) - \overline{F}_X(\tau_2)} \int_{\tau_1}^{\tau_2} \varphi(x) f_X(x) dx \\
			& = \textstyle-\frac{1}{\overline{F}_X(\tau_1) - \overline{F}_X(\tau_2)} \int_{\tau_1}^{\tau_2} \varphi(x) \overline{F}_X'(x) dx \\
			& = \textstyle \frac{1}{\overline{F}_X(\tau_1) - \overline{F}_X(\tau_2)} \left\{ \varphi(\tau_1)\overline{F}_X(\tau_1) - \varphi(\tau_2)\overline{F}_X(\tau_2) + \int_{\tau_1}^{\tau_2} \varphi'(x)\overline{F}_X(x) dx \right\} \\
			& = \textstyle \varphi(\tau_1) + \frac{1}{\overline{F}_X(\tau_1) - \overline{F}_X(\tau_2)} \left\{ \int_{\tau_1}^{\tau_2} \varphi'(x)\overline{F}_X(x) dx - [\varphi(\tau_2) - \varphi(\tau_1)]\overline{F}_X(\tau_2) \right\} \\
			& = \textstyle \varphi(\tau_1) + \int_{\tau_1}^{\tau_2} \varphi'(x) \frac{\overline{F}_X(x) - \overline{F}_X(\tau_2)}{\overline{F}_X(\tau_1) - \overline{F}_X(\tau_2)} dx.
	\end{align*}}
	Using (11), the last relationship implies that
	\begin{equation}
		\textstyle E[\varphi(X) \mid \tau_1 \leq X \leq \tau_2] = \varphi(\tau_1) + \int_{\tau_1}^{\tau_2} \varphi'(x) \exp\left\{-\int_{\tau_1}^x h_{X,1}(t,\tau_2) dt \right\} dx.
	\end{equation}
	If $ \textstyle h_{X,1}(\tau_1,\tau_2)\leq (\geq) h_{Y,1}(\tau_1,\tau_2)$ and since $ \textstyle \varphi'(x)\geq (\leq) 0$ if $\varphi(x)$ is increasing (decreasing) in $x\geq 0$, from (14) it follows that
	\begin{align*}
		E[\varphi(X) \mid \tau_1 \leq X \leq \tau_2] & \textstyle \geq (\leq) \varphi(\tau_1) + \int_{\tau_1}^{\tau_2} \varphi'(x) \exp\left\{-\int_{\tau_1}^x h_{Y,1}(t,\tau_2) dt \right\} dx \\
		& = \textstyle E[\varphi(Y) \mid \tau_1 < Y \leq \tau_2],
	\end{align*}
	where the last equality follows again from (14). Hence, (13) was proved.
\end{proof}

\begin{remark} (i) From Navarro et al. \cite[Proposition 2.1]{Navarro1997}, we have that it holds $h_{X,1}(\tau_1,\tau_2)\leq h_{Y,1}(\tau_1,\tau_2)$ if and only if $Y \leq_{lr} X$.
	
	\noindent (ii) Here we shall give another proof of (13) when the function $\varphi(x)$ is increasing. For this we need to recall the following two stochastic orders.
	\begin{itemize}
		\item[$\bullet$] The random variable $X$ is smaller than $Y$ in the increasing convex order (denoted by $X \leq_{icx} Y$) if (see \cite{Shaked2007}).
		$$ \textstyle E[\varphi(X)] \leq E[\varphi(Y)], \quad \text{for all increasing convex functions } \varphi. $$
		
		\item[$\bullet$] The random variable $X$ is said to be smaller than $Y$ in the mean doubly truncated order (denoted as $X \leq_{mdt} Y$) if $\mu_X(x,y) \leq \mu_Y(x,y)$ for all $(x,y)\in D_X \cap D_Y$ (see \cite{Navarro1997}).
	\end{itemize}
	From Navarro et al. \cite[Proposition 3.3]{Navarro1997}, we get that if $Y \leq_{lr} X$ then $Y \leq_{mdt} X$, which in turn implies that (see \cite[Proposition 3.1]{Navarro1997})
	$$ \textstyle [Y \mid \tau_1 < Y \leq \tau_2] \leq_{icx} [X \mid \tau_1 \leq X \leq \tau_2]. $$
	Hence it follows that if $Y \leq_{lr} X$ (which is equivalent to $h_{X,1}(\tau_1,\tau_2) \leq h_{Y,1}(\tau_1,\tau_2)$), then (13) holds.
\end{remark}

For any non-negative absolutely continuous random variables $X$ and $Y$ consider the function
\begin{equation} \textstyle
	g_{X,Y} (x;\tau_1,\tau_2) = \frac{\left(\frac{\overline{F}_Y (x) - \overline{F}_Y (\tau_2)}{\overline{F}_Y (\tau_1) - \overline{F}_Y (\tau_2)}\right)}{\left(\frac{\overline{F}_X (x) - \overline{F}_X (\tau_2)}{\overline{F}_X (\tau_1) - \overline{F}_X (\tau_2)}\right)} = \frac{\overline{F}_{Y_{\tau_1,\tau_2}} (x)}{\overline{F}_{X_{\tau_1,\tau_2}} (x)}.
\end{equation}

\begin{lemma}
	Let $X$ and $Y$ be two non-negative absolutely continuous random variables with finite generalized failure rates $h_{X,1} (\tau_1,\tau_2)$ and $h_{Y,1} (\tau_1,\tau_2)$, respectively. We have $ h_{Y,1} (\tau_1,\tau_2) \leq (\geq) h_{X,1} (\tau_1,\tau_2) $ if and only if $g_{X,Y} (x;\tau_1,\tau_2)$ is increasing in $x$.
\end{lemma}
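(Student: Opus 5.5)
The approach is to write $g_{X,Y}(x;\tau_1,\tau_2)$ as an exponential of an integral of the difference of the two generalized failure rates, using Lemma 1(i), and then read off monotonicity from the sign of its derivative. We fix $\tau_2$ and work on $x\in[\tau_1,\tau_2)$ with $(\tau_1,\tau_2)\in D_X\cap D_Y$. The hazard inequality is understood pointwise in the first argument, i.e. $h_{Y,1}(t,\tau_2)\le(\ge) h_{X,1}(t,\tau_2)$ for all $t\in[\tau_1,\tau_2)$, which is the form in which the statement is meaningful.

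First, apply (11) separately to $X$ and to $Y$. Both numerator and denominator of (15) then become exponentials, so that
\begin{equation*}
g_{X,Y}(x;\tau_1,\tau_2)=\exp\left\{\int_{\tau_1}^{x}\bigl[h_{X,1}(t,\tau_2)-h_{Y,1}(t,\tau_2)\bigr]\,dt\right\},\qquad \tau_1\le x<\tau_2 .
\end{equation*}
In particular $g_{X,Y}>0$ on this range and $g_{X,Y}(\tau_1;\tau_1,\tau_2)=1$. Differentiating in $x$ (fundamental theorem of calculus, valid wherever the generalized failure rates are continuous, hence almost everywhere for absolutely continuous $F_X,F_Y$) gives
\begin{equation*}
\frac{\partial}{\partial x}\,g_{X,Y}(x;\tau_1,\tau_2)=g_{X,Y}(x;\tau_1,\tau_2)\bigl[h_{X,1}(x,\tau_2)-h_{Y,1}(x,\tau_2)\bigr].
\end{equation*}

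For the direction ``$\Rightarrow$'': if $h_{Y,1}\le h_{X,1}$, the integrand in the exponent is nonnegative. Hence the exponent is nondecreasing in $x$, and so is $g_{X,Y}$. For the converse, suppose $g_{X,Y}$ is increasing. Then $\ln g_{X,Y}(x)=\int_{\tau_1}^x[h_{X,1}-h_{Y,1}]\,dt$ is increasing, so its derivative $h_{X,1}(x,\tau_2)-h_{Y,1}(x,\tau_2)$ is $\ge 0$ for (almost) every $x$. Since $\tau_1$ is arbitrary within $D_X\cap D_Y$, this yields the ordering of the generalized failure rates. The parenthetical case (reversed inequality and decreasing $g_{X,Y}$) follows by the same argument with all signs flipped.

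The only delicate point is regularity. The converse direction yields the hazard inequality only almost everywhere unless the densities are continuous. I would either assume continuity of $f_X$ and $f_Y$ (implicit in the paper's use of derivatives, e.g. in (9)) or note that the statement is to be read a.e.; the rest is routine.
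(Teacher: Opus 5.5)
Your proof is correct and is the paper's argument: rewrite $g_{X,Y}$ via (11) as $\exp\{\int_{\tau_1}^x [h_{X,1}(t,\tau_2)-h_{Y,1}(t,\tau_2)]\,dt\}$, then read monotonicity off the sign of the derivative, using $g_{X,Y}>0$. Your remark that the converse holds only almost everywhere is a refinement the paper omits; note that the a.e. step comes from Lebesgue differentiation, not from a.e. continuity of the failure rates, which absolute continuity does not guarantee.
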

\begin{proof}
	Using (11) we obtain that
	$$ \textstyle g_{X,Y} (x;\tau_1,\tau_2) = \exp\left\{ \int_{\tau_1}^x \left[ h_{X,1} (t,\tau_2) - h_{Y,1} (t,\tau_2) \right] dt \right\}, $$
	which implies that
	$$ \textstyle \frac{\partial g_{X,Y} (x;\tau_1,\tau_2)}{\partial x} = \left[ h_{X,1} (t,\tau_2) - h_{Y,1} (t,\tau_2) \right] g_{X,Y} (x;\tau_1,\tau_2), $$
	from which the result follows since $g_{X,Y} (x;\tau_1,\tau_2) \geq 0$.
\end{proof}

From Remark 3 (i) and Lemma 4 it follows that $X \leq_{lr} Y$ if and only if the function $g_{X,Y} (x;\tau_1,\tau_2)$ is increasing in $x$.

Let $X_e$ denote the equilibrium random variable corresponding to $X$. If $f_{X_e}$ and $\overline{F}_{X_e}$ denote respectively the probability density function and the survival function of $X_e$, then provided that $E(X)<\infty$,
$$ \textstyle f_{X_e} (x) = \frac{\overline{F}_X (x)}{E(X)} \quad \text{and} \quad \overline{F}_{X_e} (x) = \frac{\int_x^\infty \overline{F}_X (t) dt}{E(X)}. $$
Let also $ \textstyle h_{X_e,1} (x,y) = \frac{f_{X_e} (x)}{\overline{F}_{X_e} (x) - \overline{F}_{X_e} (y)} $
denote the GFR of $X_e$.

\begin{lemma}
	If the generalized failure rate $h_{X,1} (x,y)$ is decreasing (increasing) in $x$ for any fixed value of $y$, then
	$$ h_{X_e,1} (x,y) \leq (\geq) h_{X,1} (x,y). $$
\end{lemma}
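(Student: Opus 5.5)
The plan is to make both generalized failure rates explicit and reduce the claim to an elementary inequality between $f_X(x)$, $G(x):=\overline{F}_X(x)-\overline{F}_X(y)$ and $\int_x^y G(t)\,dt$. The decreasing case then follows from the monotonicity of $h_{X,1}(\cdot,y)$. The increasing case is where I expect trouble.

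\emph{Reduction.} Since $\overline{F}_{X_e}(x)-\overline{F}_{X_e}(y)=\frac{1}{E(X)}\int_x^y\overline{F}_X(t)\,dt$, we get
\[
h_{X_e,1}(x,y)=\frac{\overline{F}_X(x)}{\int_x^y \overline{F}_X(t)\,dt},\qquad h_{X,1}(x,y)=\frac{f_X(x)}{G(x)}.
\]
Write $\overline{F}_X=G+\overline{F}_X(y)$ on $[x,y]$. Then $h_{X_e,1}(x,y)\le h_{X,1}(x,y)$ is equivalent to
\begin{equation*}
G(x)^2+\overline{F}_X(y)\,G(x)\;\le\; f_X(x)\int_x^y G(t)\,dt+(y-x)\,\overline{F}_X(y)\,f_X(x).
\end{equation*}
I would prove the two pieces separately: (a) $G(x)^2\le f_X(x)\int_x^y G(t)\,dt$, and (b) $G(x)\le (y-x)f_X(x)$.

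\emph{Decreasing case.} If $h_{X,1}(t,y)=f_X(t)/G(t)$ is decreasing in $t$, then $f_X(t)\le \frac{f_X(x)}{G(x)}G(t)$ for $t\in[x,y]$. Integrating over $[x,y]$ and using $\int_x^y f_X(t)\,dt=G(x)$ gives
\[
G(x)\le \frac{f_X(x)}{G(x)}\int_x^y G(t)\,dt,
\]
which is (a). Since $G$ is decreasing, $\int_x^y G(t)\,dt\le (y-x)G(x)$. Substituting this into the last display gives (b). Adding (a) and $\overline{F}_X(y)$ times (b) proves the claim. Equivalently, in the paper's notation, (a) reads $m_{X,1}(x,y)\,h_{X,1}(x,y)\ge 1$, via (8).

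\emph{Increasing case, the main obstacle.} Reversing the monotonicity reverses (a), since the same integration gives $G(x)^2\ge f_X(x)\int_x^y G$. However, the reverse of (b), $G(x)\ge (y-x)f_X(x)$, i.e.\ $1/h_{X,1}(x,y)\ge y-x$, does not follow from an increasing GFR. A test case is $G(t)\propto e^{-ct}(y-t)$, for which $h_{X,1}(t,y)=c+1/(y-t)$ is increasing, but $1/h_{X,1}(x,y)<y-x$.

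So the $\overline{F}_X(y)$-term cannot be handled by the decomposition above. When $\overline{F}_X(y)$ is not small, the desired inequality $h_{X_e,1}\ge h_{X,1}$ looks genuinely doubtful. I would first check whether this example with a heavy upper tail beyond $y$ is a counterexample to the increasing half. If it is, the increasing half needs an extra hypothesis: either $y=\infty$, i.e.\ the classical IFR case, where the term vanishes, or the condition $(y-x)h_{X,1}(x,y)\le 1$.
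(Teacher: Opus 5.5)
Your decreasing case is correct, but the increasing half is a genuine gap. You leave it unproved and suspect it is false, because you only use monotonicity of $h_{X,1}(\cdot,y)$ at the single $y$ under consideration. Under that reading your example is indeed a counterexample. Take $\overline{F}_X(t)=\tfrac12 e^{-t}(1-t)+\tfrac12$ on $[0,1]$, with any finite-mean tail of mass $\tfrac12$ beyond $1$. Then $h_{X,1}(t,1)=1+1/(1-t)$ is increasing, yet $h_{X,1}(0,1)=2>2e/(e+1)=h_{X_e,1}(0,1)$. The lemma, however, assumes monotonicity for \emph{every} $y$, and that quantifier is the missing idea. Letting $y\to\infty$ gives $h_{X,1}(x,y)\to h_X(x)$, so $X$ is IFR. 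Conversely, IFR makes $h_{X,1}(x,y)=h_X(x)/\bigl(1-\overline{F}_X(y)/\overline{F}_X(x)\bigr)$ increasing in $x$ for every $y$, so the hypothesis is exactly IFR. Under IFR, $f_X(x)\overline{F}_X(t)\le f_X(t)\overline{F}_X(x)$ for $t\ge x$. Integrating over $t\in[x,y]$ gives $f_X(x)\int_x^y\overline{F}_X(t)\,dt\le \overline{F}_X(x)\,G(x)$, which is precisely $h_{X_e,1}(x,y)\ge h_{X,1}(x,y)$. This is the paper's argument: the sign of $\partial_x g_{X,X_e}$, then Lemma 4. Keeping $\overline{F}_X$ intact, rather than splitting off $\overline{F}_X(y)$, is what makes your troublesome term (b) disappear. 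Your example does not contradict the lemma because it is not IFR: $h_X(0)=1>e^{-1}=h_X(1^-)$.

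For the decreasing half, your route differs from the paper's. You use the GFR at the fixed $y$ and prove (a) and (b) separately. The paper passes to DFR and uses the same integrated pointwise inequality with the sign reversed. The paper's version is the one with content. For finite $y$, $h_{X,1}(\cdot,y)$ can never be decreasing on $[\tau_1,y)$: we have $\int_{\tau_1}^x h_{X,1}(t,y)\,dt=\ln\bigl(G(\tau_1)/G(x)\bigr)\to\infty$ as $G(x)\downarrow 0$, while a decreasing integrand is bounded by $h_{X,1}(\tau_1,y)$ on an interval of finite length. So your decreasing argument is valid, but its hypothesis is empty except in the limit $y=\infty$. The DFR argument, by contrast, gives $h_{X_e,1}(x,y)\le h_{X,1}(x,y)$ for every finite $y$.
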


\begin{proof} From (15), we get that
	\begin{align*} 
		g_{X,X_e} (x;\tau_1,\tau_2) & \textstyle = \frac{\overline{F}_X (\tau_1) - \overline{F}_X (\tau_2)}{\overline{F}_{X_e} (\tau_1) - \overline{F}_{X_e} (\tau_2)} \cdot \frac{\overline{F}_{X_e} (x) - \overline{F}_{X_e} (\tau_2)}{\overline{F}_X (x) - \overline{F}_X (\tau_2)} \\
		& \textstyle = \frac{\overline{F}_X (\tau_1) - \overline{F}_X (\tau_2)}{\overline{F}_{X_e} (\tau_1) - \overline{F}_{X_e} (\tau_2)} \cdot \frac{\int_x^{\tau_2} f_{X_e} (t) dt}{\int_x^{\tau_2} f_X (t) dt},
	\end{align*}
	and
	$$ \textstyle \frac{\partial}{\partial x} \left( \frac{\int_x^{\tau_2} f_{X_e} (t) dt}{\int_x^{\tau_2} f_X (t) dt} \right) = \frac{1}{\left(\int_x^{\tau_2} f_X (t) dt\right)^2 E(X)} \int_x^{\tau_2} \left[ f_X (x) \overline{F}_X (t) - f_X (t) \overline{F}_X (x) \right] dt. $$
	If $h_{X,1} (x,y)$ is decreasing in $x$, it also follows that the failure rate $h_X (x)$ is decreasing in $x$, and thus for $t \geq x$ it holds $h_X (x) \geq h_X (t)$ implying that
	$ f_X (x) \overline{F}_X (t) \geq f_X (t) \overline{F}_X (x) $. Therefore, it holds that $g_{X,X_e} (x;\tau_1,\tau_2)$ is increasing in $x$, and hence the result follows from Lemma 4. Similarly, if $h_{X,1} (x,y)$ is increasing in $x$, it can be shown that $g_{X_e,X} (x;\tau_1,\tau_2)$ is increasing in $x$ and thus from Lemma 4 we obtain $ h_{X,1} (x,y) \leq h_{X_e,1} (x,y). $
\end{proof}

\section{Representations for $\boldsymbol{H(X;\tau_1,\tau_2)}$}  \label{sec3}

From (5) and using (8), an alternative representation of $H(X;\tau_1,\tau_2)$ is given by
\begin{align}
	\textstyle H(X;\tau_1,\tau_2) = m_{X,1}(\tau_1,\tau_2) \ln[\overline{F}_X(\tau_1)-\overline{F}_X(\tau_2)] \textstyle - \int_{\tau_1}^{\tau_2} \frac{\overline{F}_X(x)-\overline{F}_X(\tau_2)}{\overline{F}_X(\tau_1)-\overline{F}_X(\tau_2)} \ln[\overline{F}_X(x)-\overline{F}_X(\tau_2)] \, dx.
\end{align}

\begin{example}
	Table \ref{tab:H} provides the values of $H(X; \tau_1, \tau_2)$ based on the survival functions $\overline{F}(x)$ of the following distributions: The power distribution (PD) with $\overline{F}(x) = 1 - (x/b)^a$ for $0 < x < b$, $a > 0$ and $b \neq 1$, the beta distribution with $\overline{F}(x) = 1 - x^c$ for $0 < x < 1$ and $c > 0$, the exponential distribution with $\overline{F}(x) = \exp(-\lambda x)$ for $x \geq 0$ and $\lambda > 0$, and the Lomax (or Pareto type II) distribution with $\overline{F}(x) = \left[\lambda/(\lambda + x)\right]^\alpha$ for $x \geq 0$ and $\lambda, \alpha > 0$. In Table \ref{tab:H}, we can easily observe that $H(X; \tau_1, \tau_2)$ is decreasing in $\tau_1$ for fixed values of $\tau_2$, and increasing in $\tau_2$ for fixed values of $\tau_1$.

\begin{table}[H]
\centering
\caption{Values of $H(X; \tau_1, \tau_2)$ for various distributions.}\label{tab:H}
\begin{tabular}{@{}cccccc@{}}
\toprule
$\tau_1$ & $\tau_2$ & PD(0.1, 0.9) & PD(0.3, 0.9) & Beta(0.2,1) & Beta(0.5,1) \\
\midrule
0.1 & 0.6 & 0.13182 & 0.13180 & 0.13193 & 0.13084  \\ 
0.3 & 0.6 & 0.07722 & 0.07687 & 0.07705 & 0.07643  \\ 
0.5 & 0.6 & 0.02522 & 0.02517 & 0.02520 & 0.02513  \\ 
0.1 & 0.7 & 0.15839 & 0.15862 & 0.15868 & 0.15752  \\ 
0.3 & 0.7 & 0.10347 & 0.10298 & 0.10325 & 0.10232  \\ 
0.5 & 0.7 & 0.05078 & 0.05063 & 0.05071 & 0.05047  \\ 
0.1 & 0.9 & 0.21145 & 0.21241 & 0.21223 & 0.21111  \\ 
0.3 & 0.9 & 0.15629 & 0.15559 & 0.15599 & 0.15447  \\ 
0.5 & 0.9 & 0.10258 & 0.10214 & 0.10237 & 0.10162  \\ 
\midrule
$\tau_1$ & $\tau_2$ & Exp(0.5) & Exp(1) & Lomax(2,1) & Lomax(3,1)\\ 
\midrule
3 & 10 & 1.52470 & 0.97614 & 1.61098 & 1.45535 \\ 
7 & 10 & 0.76254 & 0.68870 & 0.77343 & 0.77164 \\ 
9 & 10 & 0.25514 & 0.25652 & 0.25364 & 0.25439 \\ 
3 & 12 & 1.73694 & 0.99480 & 1.95250 & 1.69662 \\ 
7 & 12 & 1.20018 & 0.90432 & 1.28394 & 1.26233 \\ 
9 & 12 & 0.76254 & 0.68870 & 0.77200 & 0.77252 \\ 
3 & 15 & 1.90245 & 0.99955 & 2.38429 & 1.96523 \\ 
7 & 15 & 1.64355 & 0.98871 & 2.00701 & 1.91438 \\ 
9 & 15 & 1.37740 & 0.95123 & 1.54259 & 1.51885 \\ 
\bottomrule
\end{tabular}
\end{table}

\end{example}

The following theorem shows the relation between the entropy $H(X;\tau_1,\tau_2)$ and the doubly truncated mean residual lifetime $m_{X,1}(\tau_1,\tau_2)$ of $X$.

\begin{theorem} Let $X$ be a non-negative absolutely continuous random variable. Then, for all $(\tau_1,\tau_2) \in D_X$
	\begin{equation}
		H(X;\tau_1,\tau_2) = \mathbb{E}[m_{X,1}(X,\tau_2) \mid \tau_1 \leq X \leq \tau_2].
	\end{equation}
\end{theorem}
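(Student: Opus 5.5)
The plan is to compute the right-hand side of the statement explicitly as a double integral, swap the order of integration with Fubini, and recognize the resulting inner integral as a logarithm. Write $G(x)=\overline{F}_X(x)-\overline{F}_X(\tau_2)$ for $\tau_1\le x\le \tau_2$, so that $G$ is non-negative and decreasing with $G(\tau_2)=0$. By (8) applied with $\tau_1$ replaced by $x$,
\[
m_{X,1}(x,\tau_2)=\int_x^{\tau_2}\frac{G(y)}{G(x)}\,dy .
\]
Using the density $f_X(x)/G(\tau_1)$ of $X_{\tau_1,\tau_2}$ on $[\tau_1,\tau_2]$, the right-hand side of the statement becomes
\[
\mathbb{E}[m_{X,1}(X,\tau_2)\mid \tau_1\le X\le\tau_2]=\int_{\tau_1}^{\tau_2}\frac{f_X(x)}{G(\tau_1)}\,\frac{1}{G(x)}\int_x^{\tau_2}G(y)\,dy\,dx .
\]

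Since the integrand is non-negative, Tonelli's theorem lets us interchange the integrals over the region $\tau_1\le x\le y\le\tau_2$. This gives
\[
\int_{\tau_1}^{\tau_2}\frac{G(y)}{G(\tau_1)}\left(\int_{\tau_1}^{y}\frac{f_X(x)}{G(x)}\,dx\right)dy .
\]
The inner integrand is $h_{X,1}(x,\tau_2)$ by (6). Hence, by Lemma 1(i) (equation (11)), the inner integral equals
\[
\int_{\tau_1}^{y}h_{X,1}(x,\tau_2)\,dx=-\ln\frac{G(y)}{G(\tau_1)} .
\]
Substituting this back, the expression becomes
\[
-\int_{\tau_1}^{\tau_2}\frac{G(y)}{G(\tau_1)}\ln\frac{G(y)}{G(\tau_1)}\,dy,
\]
which is exactly $H(X;\tau_1,\tau_2)$ by Definition 1, equation (5). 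This proves the statement.

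The main point needing care is justifying the interchange. The inner integral $\int_{\tau_1}^{y}h_{X,1}(x,\tau_2)\,dx$ diverges as $y\to\tau_2$, since $G(x)\to0$ there. However, it is multiplied by $G(y)$, and $-u\ln u$ is bounded on $[0,1]$. So the integrand is non-negative and the iterated integral is finite, which is all Tonelli's theorem requires. Points where $f_X(x)=0$ and $G(x)>0$ cause no difficulty.

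An alternative route is integration by parts. One would write the conditional expectation as $-\int_{\tau_1}^{\tau_2} m_{X,1}(x,\tau_2)\,d\!\left(G(x)/G(\tau_1)\right)$ and use relation (9) for $\partial m_{X,1}/\partial\tau_1$. This is messier because of the boundary terms, so the Fubini argument above is preferred.
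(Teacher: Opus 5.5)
Your proof is correct and follows essentially the same route as the paper: write the conditional expectation using (8), interchange the order of integration, and recognize the inner integral of $f_X/(\overline{F}_X-\overline{F}_X(\tau_2))$ as $-\ln$ of the normalized survival function, which gives (5). Your explicit appeal to Tonelli for the non-negative integrand is a welcome addition that the paper leaves implicit. Note that Tonelli needs only non-negativity and measurability, not finiteness of the iterated integral.
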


\begin{proof} It holds
	$$ \textstyle \mathbb{E}[m_{X,1}(X,\tau_2) \mid \tau_1 \leq X \leq \tau_2] = \frac{1}{\overline{F}_X(\tau_1) - \overline{F}_X(\tau_2)} \int_{\tau_1}^{\tau_2} m_{X,1}(t,\tau_2) f_X(t) dt, $$
	and hence using (8) we get
	\begin{align*}
		\mathbb{E}[m_{X,1}(X,\tau_2) \mid \tau_1 \leq X \leq \tau_2] & \textstyle = \frac{1}{\overline{F}_X(\tau_1) - \overline{F}_X(\tau_2)} \\
		& \textstyle \times \int_{\tau_1}^{\tau_2} \frac{1}{\overline{F}_X(t)-\overline{F}_X(\tau_2)} \int_t^{\tau_2} [\overline{F}_X(x)-\overline{F}_X(\tau_2)] dx \, f_X(t) dt.
	\end{align*} 
	Applying Fubini’s Theorem, the last relation yields
	{\small \begin{align*}
			& \textstyle \qquad \mathbb{E}[m_{X,1}(X,\tau_2) \mid \tau_1 \leq X \leq \tau_2] \\
			& \textstyle \qquad \qquad = \frac{1}{\overline{F}_X(\tau_1) - \overline{F}_X(\tau_2)} \int_{\tau_1}^{\tau_2} [\overline{F}_X(x)-\overline{F}_X(\tau_2)] \left( \int_{\tau_1}^x \frac{f_X(t)}{\overline{F}_X(t)-\overline{F}_X(\tau_2)} dt \right) dx \\
			& \textstyle \qquad \qquad = - \frac{1}{\overline{F}_X(\tau_1) - \overline{F}_X(\tau_2)} \int_{\tau_1}^{\tau_2} [\overline{F}_X(x)-\overline{F}_X(\tau_2)] \left( \int_{\tau_1}^x \frac{d}{dt} \ln[\overline{F}_X(t)-\overline{F}_X(\tau_2)] dt \right) dx \\
			& \textstyle \qquad \qquad = H(X;\tau_1,\tau_2).
	\end{align*}}
	
	When $\tau_2 \to \infty$, (17) is reduced to $\mathcal{E}(X;t)=\mathbb{E}[m_X(X) \mid X > t]$ which is proved in Theorem 3.1 of \cite{Asadi2007}.
\end{proof}

\begin{example} If $X$ is uniformly distributed in $[0,b]$, then $m_{X,1}(\tau_1,\tau_2) = \frac{\tau_2 - \tau_1}{2}$, and hence from Theorem 1 $$ \textstyle H(X;\tau_1,\tau_2) = \mathbb{E} \left[ \frac{\tau_2 - X}{2} \mid \tau_1 \leq X \leq \tau_2 \right] = \frac{\tau_2 - \tau_1}{4}. $$
	This shows that the CRIE for the uniform distribution is an increasing function in $\tau_2$ and a decreasing function in $\tau_1$. Hence, as $\tau_2$ gets smaller (larger), the uncertainty gets smaller (larger). Similarly, as $\tau_1$ gets larger (smaller), the uncertainty gets smaller (larger).
\end{example}

In Theorem 2 we give a covariance representation of $H(X;\tau_1,\tau_2)$, which is very useful to obtain upper bounds of this information measure.

\begin{theorem} Let $X$ be a non-negative absolutely continuous random variable. Then for all $(\tau_1,\tau_2) \in D_X$
	\begin{equation}
		\textstyle H(X;\tau_1,\tau_2) = \mathrm{Cov}\left(X, -\ln[\overline{F}_X(X)-\overline{F}_X(\tau_2)] \mid \tau_1 \leq X \leq \tau_2 \right).
	\end{equation}
\end{theorem}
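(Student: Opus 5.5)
Write $c=\overline{F}_X(\tau_1)-\overline{F}_X(\tau_2)$, $G(x)=\overline{F}_X(x)-\overline{F}_X(\tau_2)$ and $\psi(x)=-\ln G(x)$ on $[\tau_1,\tau_2)$. The plan is to express the covariance as $\mathbb{E}[X\psi(X)\mid\cdot]-\mathbb{E}[X\mid\cdot]\,\mathbb{E}[\psi(X)\mid\cdot]$, where all expectations are with respect to the conditional density $f_X/c$ on $[\tau_1,\tau_2]$. I would then evaluate each term by integration by parts, using $G'=-f_X$, and compare the result with representation (16) of $H(X;\tau_1,\tau_2)$.

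First I compute $\mathbb{E}[\psi(X)\mid\tau_1\le X\le\tau_2]$. Substituting $u=G(x)/c$ turns it into $-\ln c-\int_0^1\ln u\,du=1-\ln c$. This is the familiar fact that $-\ln$ of a conditional survival function is standard exponential.

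Next I compute $\mathbb{E}[X\psi(X)\mid\cdot]=\frac1c\int_{\tau_1}^{\tau_2}x\psi(x)f_X(x)\,dx$. Since $\psi'(x)=f_X(x)/G(x)$, we have $f_X\psi=-\frac{d}{dx}\bigl[G\psi+G\bigr]$, because $(G\ln G-G)'=-f_X\ln G$. So I integrate by parts with $u=x$ and $dv=f_X\psi\,dx$, taking $v=-(G-G\ln G)$. At $x=\tau_2$ the boundary term vanishes because $G\to0$ and $G\ln G\to0$ (this limit is the one delicate point). At $x=\tau_1$ it contributes $\tau_1(c-c\ln c)$. The result is
\[
\textstyle \mathbb{E}[X\psi(X)\mid\cdot]=\tau_1(1-\ln c)+\frac1c\int_{\tau_1}^{\tau_2}\bigl(G(x)-G(x)\ln G(x)\bigr)dx .
\]

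For the product term, (10) gives $\mathbb{E}[X\mid\cdot]=\tau_1+m_{X,1}(\tau_1,\tau_2)$, so the product equals $(\tau_1+m_{X,1})(1-\ln c)$. By (8), $\frac1c\int G\,dx=m_{X,1}$. Subtracting, the $\tau_1(1-\ln c)$ terms cancel and the $m_{X,1}$ terms cancel, leaving
\[
\textstyle \mathrm{Cov}=m_{X,1}\ln c-\frac1c\int_{\tau_1}^{\tau_2}G(x)\ln G(x)\,dx ,
\]
which is exactly (16).

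The main obstacle is justifying the integration by parts at the endpoint $\tau_2$, where $\psi$ blows up. I would handle this by integrating over $[\tau_1,s]$ and letting $s\uparrow\tau_2$, using $G(s)\ln G(s)\to0$ and the integrability of $\ln u$ on $(0,1)$. This limiting argument also makes the covariance well defined.
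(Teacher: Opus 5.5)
Your proposal is correct and follows essentially the same route as the paper. Both arguments split the covariance as $\mathbb{E}[X\psi(X)\mid\cdot]-\mu_X(\tau_1,\tau_2)\,\mathbb{E}[\psi(X)\mid\cdot]$, compute $\mathbb{E}[\ln G(X)\mid\cdot]=\ln c-1$ and $\mathbb{E}[X\ln G(X)\mid\cdot]$ by integration by parts (the paper's (20)--(21)), and then use (8), (10) and (16) to identify the result with $H(X;\tau_1,\tau_2)$. Your explicit truncation at $s\uparrow\tau_2$, which controls the boundary term $G\ln G\to 0$, is a small gain in rigor over the paper, which carries out that integration by parts without comment.
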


\begin{proof} 
	We have
	\begin{align}
		&\mathrm{Cov}\left(X, -\ln[\overline{F}_X(X)-\overline{F}_X(\tau_2)] \mid \tau_1 \leq X \leq \tau_2 \right) \nonumber \\
		& \textstyle \qquad = -\mathbb{E}[X \ln[\overline{F}_X(X)-\overline{F}_X(\tau_2)] \mid \tau_1 \leq X \leq \tau_2] \nonumber \\
		& \textstyle \qquad \qquad + \mu_X(\tau_1,\tau_2) \mathbb{E}[\ln[\overline{F}_X(X)-\overline{F}_X(\tau_2)] \mid \tau_1 \leq X \leq \tau_2].
	\end{align}
	It holds
	\begin{align}
		& \mathbb{E}[\ln[\overline{F}_X(X)-\overline{F}_X(\tau_2)] \mid \tau_1 \leq X \leq \tau_2] \nonumber \\
		& \textstyle \qquad \qquad \qquad \qquad = \frac{1}{\overline{F}_X(\tau_1)-\overline{F}_X(\tau_2)} \int_{\tau_1}^{\tau_2} \ln[\overline{F}_X(x)-\overline{F}_X(\tau_2)] f_X(x) dx \nonumber \\
		& \textstyle \qquad \qquad \qquad \qquad = \frac{1}{\overline{F}_X(\tau_1)-\overline{F}_X(\tau_2)} \int_{\tau_1}^{\tau_2} \ln[\overline{F}_X(x)-\overline{F}_X(\tau_2)] (\overline{F}_X(x))' dx \nonumber \\
		& \textstyle \qquad \qquad \qquad \qquad = -\frac{1}{\overline{F}_X(\tau_1)-\overline{F}_X(\tau_2)} \int_{\tau_1}^{\tau_2} \ln[\overline{F}_X(x)-\overline{F}_X(\tau_2)] (\overline{F}_X(x)-\overline{F}_X(\tau_2))' dx \nonumber \\
		& \textstyle \qquad \qquad \qquad \qquad = \ln[\overline{F}_X(\tau_1)-\overline{F}_X(\tau_2)] - 1,
	\end{align}
	and similarly,
	\begin{align}
		& \qquad \mathbb{E}[X \ln[\overline{F}_X(X)-\overline{F}_X(\tau_2)] \mid \tau_1 \leq X \leq \tau_2] \nonumber \\
		& \textstyle \qquad \quad = -\frac{1}{\overline{F}_X(\tau_1)-\overline{F}_X(\tau_2)} \int_{\tau_1}^{\tau_2} x \ln[\overline{F}_X(x)-\overline{F}_X(\tau_2)] (\overline{F}_X(x)-\overline{F}_X(\tau_2))' dx \nonumber \\
		& \textstyle \qquad \quad = \tau_1 \ln[\overline{F}_X(\tau_1)-\overline{F}_X(\tau_2)] \nonumber \\
		& \textstyle \qquad \qquad + \frac{1}{\overline{F}_X(\tau_1)-\overline{F}_X(\tau_2)} \int_{\tau_1}^{\tau_2} [\overline{F}_X(x)-\overline{F}_X(\tau_2)] \ln[\overline{F}_X(x)-\overline{F}_X(\tau_2)] dx - \mu_X(\tau_1,\tau_2).
	\end{align}
	Hence, substituting (20) and (21) into the right-hand side of (19) we obtain
	\begin{align}
		& \textstyle \qquad \mathrm{Cov}\left(X, -\ln[\overline{F}_X(X)-\overline{F}_X(\tau_2)] \mid \tau_1 \leq X \leq \tau_2 \right) = -\tau_1 \ln[\overline{F}_X(\tau_1)-\overline{F}_X(\tau_2)] \nonumber \\
		& \textstyle \qquad \quad - \int_{\tau_1}^{\tau_2} \frac{\overline{F}_X(x)-\overline{F}_X(\tau_2)}{\overline{F}_X(\tau_1)-\overline{F}_X(\tau_2)} \ln[\overline{F}_X(x)-\overline{F}_X(\tau_2)] dx \nonumber \\
		& \textstyle \qquad \qquad + \mu_X(\tau_1,\tau_2) \ln[\overline{F}_X(\tau_1)-\overline{F}_X(\tau_2)]. 
	\end{align}
	Now, using (8) and (10) it follows that
	$$ \textstyle \mu_X(\tau_1, \tau_2) = \tau_1 + \int_{\tau_1}^{\tau_2} \frac{\overline{F}_X(x) - \overline{F}_X(\tau_2)}{\overline{F}_X(\tau_1) - \overline{F}_X(\tau_2)} \, dx, $$
	and substituting this into the right-hand side of (22), we get
	\begin{align*}
		& \textstyle \qquad \mathrm{Cov}\left(X, -\ln\left[\overline{F}_X(X) - \overline{F}_X(\tau_2)\right] \,\middle|\, \tau_1 \leq X \leq \tau_2 \right) \\
		& \textstyle \qquad \quad = -\int_{\tau_1}^{\tau_2} \frac{\overline{F}_X(x) - \overline{F}_X(\tau_2)}{\overline{F}_X(\tau_1) - \overline{F}_X(\tau_2)} \ln\left[\overline{F}_X(x) - \overline{F}_X(\tau_2)\right] \, dx \\
		& \textstyle \qquad \qquad + \ln\left[\overline{F}_X(\tau_1) - \overline{F}_X(\tau_2)\right] \int_{\tau_1}^{\tau_2} \frac{\overline{F}_X(x) - \overline{F}_X(\tau_2)}{\overline{F}_X(\tau_1) - \overline{F}_X(\tau_2)} \, dx \\
		& \textstyle \qquad \quad = H(X; \tau_1, \tau_2).
	\end{align*}
\end{proof}

From Theorem 2, we can directly obtain the following corollary.
\begin{corollary}
	Let $X$ be a non-negative absolutely continuous random variable. Then it holds
	$$ \textstyle \mathcal{E}(X;t) = \mathrm{Cov}(X, -\ln \overline{F}_X(X) \mid X > t). $$
\end{corollary}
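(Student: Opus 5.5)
The plan is to obtain Corollary 1 as the limiting case $\tau_2\to\infty$ of Theorem 2, taking $\tau_1=t$. By (3) and the remark after Definition 1, $H(X;t,\infty)=\mathcal{E}(X;t)$. On the covariance side, $\overline{F}_X(\tau_2)\to 0$, so the conditioning event $\{t\le X\le\tau_2\}$ increases to $\{X\ge t\}$. Since $X$ is absolutely continuous, this event coincides almost surely with $\{X>t\}$. Formally, the right-hand side of (18) then becomes $\mathrm{Cov}(X,-\ln\overline{F}_X(X)\mid X>t)$.

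To make this rigorous, I would not pass to the limit inside the covariance directly. Instead I would repeat the computation of the proof of Theorem 2 with $\tau_2=\infty$, i.e. with $\overline{F}_X(\tau_2)$ replaced by $0$ throughout. The first step is the analogue of (20):
\[
\mathbb{E}[\ln\overline{F}_X(X)\mid X>t]=\ln\overline{F}_X(t)-1 .
\]
This follows from the substitution $u=\overline{F}_X(x)/\overline{F}_X(t)$, under which $U=\overline{F}_X(X)/\overline{F}_X(t)$ given $X>t$ is uniform on $(0,1)$. The second step is the analogue of (21), obtained by integrating by parts $\int_t^\infty x\ln\overline{F}_X(x)\,f_X(x)\,dx$. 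Here one takes $d[\overline{F}_X(x)\ln\overline{F}_X(x)-\overline{F}_X(x)]$ as the differential, giving
\[
\mathbb{E}[X\ln\overline{F}_X(X)\mid X>t]=t\ln\overline{F}_X(t)+\int_t^\infty\frac{\overline{F}_X(x)}{\overline{F}_X(t)}\ln\overline{F}_X(x)\,dx-\mu,
\]
where $\mu=\mathbb{E}[X\mid X>t]=t+m_X(t)$. The third step substitutes both identities into $\mathrm{Cov}=-\mathbb{E}[X\ln\overline{F}_X(X)\mid X>t]+\mu\,\mathbb{E}[\ln\overline{F}_X(X)\mid X>t]$. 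The terms $\pm\mu$ and $\pm t\ln\overline{F}_X(t)$ cancel, leaving $m_X(t)\ln\overline{F}_X(t)-\int_t^\infty\frac{\overline{F}_X(x)}{\overline{F}_X(t)}\ln\overline{F}_X(x)\,dx$. This is exactly $\mathcal{E}(X;t)$ after splitting the logarithm in (3), i.e. the $\tau_2=\infty$ version of (16).

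The main obstacle is the boundary term at infinity in the integration by parts, namely $\lim_{x\to\infty}x\,\overline{F}_X(x)\bigl(\ln\overline{F}_X(x)-1\bigr)=0$, together with finiteness of the integrals involved. I would assume, as is implicit for $\mathcal{E}(X;t)$ to be finite, that $\mathbb{E}(X)<\infty$ and $\mathcal{E}(X)<\infty$. Finite mean gives $x\overline{F}_X(x)\to0$. For the term $x\overline{F}_X(x)\ln\overline{F}_X(x)$, I would first try to use integrability of $-\overline{F}_X\ln\overline{F}_X$ together with monotonicity of $\overline{F}_X$. If that proves delicate, the alternative route is to apply Theorem 2 for finite $\tau_2$ and pass to the limit $\tau_2\to\infty$ by monotone/dominated convergence on each term of (22), using the same integrability assumptions.
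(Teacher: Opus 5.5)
Your proposal is correct and follows the paper's route: the paper obtains the corollary directly from Theorem 2 by taking $\tau_1=t$ and $\tau_2\to\infty$, and your recomputation of (20)--(22) with $\overline{F}_X(\tau_2)$ replaced by $0$ is that specialization carried out explicitly. Your first idea for the boundary term works, and the paper does not address this point: $-\overline{F}_X\ln\overline{F}_X$ is non-increasing once $\overline{F}_X<e^{-1}$ and is integrable, so $x\overline{F}_X(x)\ln\overline{F}_X(x)\to 0$.
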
 

For an absolutely non-negative continuous random variable $X$ with survival function $\overline{F}_X(x)=1-F_X(x)$ and finite mean value, let us now recall that the \textit{relevation transform} \# (where the symbol \# denotes the relevation transform of $F_X$ by itself) allows us to define a new random variable, denoted as $X\# X$, having survival function 
\begin{equation} \textstyle 
	(\overline{F}_X\# \overline{F}_X)(x)=\overline{F}_{X\# X}(x)=\overline{F}_X(x)+\overline{F}_X(x)\int_0^x \frac{f_X(u)}{\overline{F}_X(u)}\,du,\quad x\geq 0.
\end{equation}
The relevation transform was introduced by Krakowski \cite{Krakowski1973}. For further results, see \cite{Baxter1982}, \cite{Kapodistria2012}, \cite{Psarrakos2013}, \cite{Burkschat2014}, \cite{DiCrescenzo2015}, and \cite{Psarrakos2018} as well as the references therein.

Based on (23), we can define the revelation transform of the doubly truncated random variable $X_{\tau_1,\tau_2}=X|\tau_1 \leq X \leq \tau_2$, i.e., the relevation transform of $F_{X_{\tau_1,\tau_2}}$ by itself. Thus, we have the following definition.
\begin{definition}
	Let $X$ be a non-negative absolutely continuous random variable. The survival function of the relevation transform of $X_{\tau_1,\tau_2}\# X_{\tau_1,\tau_2}$, where $X_{\tau_1,\tau_2}=X|\tau_1 \leq X \leq \tau_2$, is given by
	\begin{align}
		(\overline{F}_{X_{\tau_1,\tau_2}}\# \overline{F}_{X_{\tau_1,\tau_2}})(x) & \textstyle =\overline{F}_{X_{\tau_1,\tau_2}\# X_{\tau_1,\tau_2}}(x) \nonumber \\
		& \textstyle +\overline{F}_{X_{\tau_1,\tau_2}\# X_{\tau_1,\tau_2}}(x)\int_{\tau_1}^x \frac{f_{X_{\tau_1,\tau_2}}(u)}{\overline{F}_{X_{\tau_1,\tau_2}}(u)}\,du,\quad \tau_1\leq x\leq \tau_2.
	\end{align}
\end{definition}

It can be verified that $\overline{F}_{X_{\tau_1,\tau_2}}\# \overline{F}_{X_{\tau_1,\tau_2}}$ is a survival function. Indeed, it holds
$$ \textstyle (\overline{F}_{X_{\tau_1,\tau_2}}\# \overline{F}_{X_{\tau_1,\tau_2}})(\tau_1)=1;\quad (\overline{F}_{X_{\tau_1,\tau_2}}\# \overline{F}_{X_{\tau_1,\tau_2}})(\tau_2)=0 $$
and
$$ \textstyle \frac{d}{dx}(\overline{F}_{X_{\tau_1,\tau_2}}\# \overline{F}_{X_{\tau_1,\tau_2}})(x) = -f_{X_{\tau_1,\tau_2}}(x)\int_{\tau_1}^x h_{X,1}(u,\tau_2)\,du < 0, $$
implying that $\overline{F}_{X_{\tau_1,\tau_2}}\# \overline{F}_{X_{\tau_1,\tau_2}}$ is a survival function.

From (24), we can directly obtain the following alternative representation for the survival function of $X_{\tau_1,\tau_2}\# X_{\tau_1,\tau_2}$.
\begin{equation} \textstyle 
	(\overline{F}_{X_{\tau_1,\tau_2}}\# \overline{F}_{X_{\tau_1,\tau_2}})(x)=\overline{F}_{X_{\tau_1,\tau_2}}(x)+\overline{F}_{X_{\tau_1,\tau_2}}(x)\int_{\tau_1}^x h_{X,1}(u,\tau_2)\,du.
\end{equation}

Also, by setting $\Lambda_X(x;\tau_1,\tau_2)$ to the cumulative hazard rate at the age $x$ for the doubly truncated random variable $X_{\tau_1,\tau_2}$, i.e.,
$$ \textstyle \Lambda_X(x;\tau_1,\tau_2)=\int_{\tau_1}^x h_{X,1}(u,\tau_2)\,du,\quad \tau_1\leq x\leq \tau_2, $$
or equivalently by Lemma 1 (i)
$$ \textstyle \Lambda_X(x;\tau_1,\tau_2)=-\ln \overline{F}_{X_{\tau_1,\tau_2}}(x) = -\ln\left( \frac{\overline{F}_X(x)-\overline{F}_X(\tau_2)}{\overline{F}_X(\tau_1)-\overline{F}_X(\tau_2)} \right),\quad \tau_1\leq x\leq \tau_2, $$
then from (25) it also follows that
$$ \textstyle (\overline{F}_{X_{\tau_1,\tau_2}}\# \overline{F}_{X_{\tau_1,\tau_2}})(x) = \left[1+\Lambda_X(x;\tau_1,\tau_2)\right] \overline{F}_{X_{\tau_1,\tau_2}}(x). $$
For $\tau_1=0$, $\tau_2\to\infty$, the last relation is reduced to the well-known identity (see, for example, rel. (10) in \cite{Psarrakos2018})
$$ \textstyle (\overline{F}_X\# \overline{F}_X)(x) = [1+\Lambda_X(x)]\overline{F}_X(x). $$
From (23), it follows that
\begin{equation} \textstyle 
	\textstyle \mathcal{E}(X)=\int_0^\infty (\overline{F}_X\# \overline{F}_X)(x)\,dx - E(X),
\end{equation}
that is, the CRE of $X$ can be written as the difference between the mean values of $X\# X$ and $X$. The next theorem provides an alternative representation for $H(X;\tau_1,\tau_2)$ based on the relevation transform of $F_{X_{\tau_1,\tau_2}}$ by itself, which also generalizes (26).

\begin{theorem}
	If $X$ is a non-negative absolutely continuous random variable, then, for all $(\tau_1,\tau_2)\in D_X$
	\begin{equation} \textstyle 
		H(X;\tau_1,\tau_2)=\int_{\tau_1}^{\tau_2} (\overline{F}_{X_{\tau_1,\tau_2}}\# \overline{F}_{X_{\tau_1,\tau_2}})(x)\,dx - m_{X,1}(\tau_1,\tau_2).
	\end{equation}
\end{theorem}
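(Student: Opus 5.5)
The plan is to work directly from the representation (25) of the relevation-transform survival function, combined with Lemma 1 (i), and then integrate over $[\tau_1,\tau_2]$.

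First, by Lemma 1 (i), or equivalently the identity for $\Lambda_X(x;\tau_1,\tau_2)$ stated just before the theorem, we have
$$\textstyle \int_{\tau_1}^x h_{X,1}(u,\tau_2)\,du=-\ln \overline{F}_{X_{\tau_1,\tau_2}}(x),\qquad \tau_1\le x\le\tau_2 .$$
Substituting this into (25) gives, for $\tau_1\le x\le \tau_2$,
$$\textstyle (\overline{F}_{X_{\tau_1,\tau_2}}\# \overline{F}_{X_{\tau_1,\tau_2}})(x)=\overline{F}_{X_{\tau_1,\tau_2}}(x)-\overline{F}_{X_{\tau_1,\tau_2}}(x)\ln \overline{F}_{X_{\tau_1,\tau_2}}(x).$$

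Next, I will integrate both sides over $[\tau_1,\tau_2]$. The integral of the second term is exactly $H(X;\tau_1,\tau_2)$, by the first line of Definition 1, that is, (5). The integral of the first term is
$$\textstyle \int_{\tau_1}^{\tau_2}\frac{\overline{F}_X(x)-\overline{F}_X(\tau_2)}{\overline{F}_X(\tau_1)-\overline{F}_X(\tau_2)}\,dx ,$$
which equals $m_{X,1}(\tau_1,\tau_2)$ by (8). Rearranging gives (27).

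The only point needing care is integrability. The integrand $-y\ln y$ is bounded on $[0,1]$ and the interval $[\tau_1,\tau_2]$ is finite, so both integrals are finite whenever $\tau_2<\infty$. This means the two integrals can legitimately be split and subtracted. If one also wants to cover $\tau_2=\infty$ (the case that recovers (26)), the argument needs $E(X)<\infty$, or equivalently $m_{X,1}<\infty$, so that we never form a difference of infinite quantities. I do not expect any genuine obstacle; the main step is simply recognizing that the inner integral in (25) is the doubly truncated cumulative hazard.

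As a final check, setting $\tau_1=0$ and letting $\tau_2\to\infty$ should reduce (27) to (26), since then $m_{X,1}(0,\infty)=E(X)$.
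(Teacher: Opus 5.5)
Your proof is correct, but it takes a more direct route than the paper. You use Lemma~1(i), equivalently the identity $\Lambda_X(x;\tau_1,\tau_2)=-\ln\overline{F}_{X_{\tau_1,\tau_2}}(x)$ stated just before the theorem, to rewrite the relevation survival function pointwise as $\overline{F}_{X_{\tau_1,\tau_2}}(x)-\overline{F}_{X_{\tau_1,\tau_2}}(x)\ln\overline{F}_{X_{\tau_1,\tau_2}}(x)$. After that, (27) is just (8) plus the definition (5).

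The paper does something different. It leaves the inner integral $\int_{\tau_1}^x h_{X,1}(u,\tau_2)\,du$ unevaluated and swaps the order of integration by Fubini. It then recognizes the resulting inner $x$-integral as $\frac{f_X(u)}{\overline{F}_X(\tau_1)-\overline{F}_X(\tau_2)}\,m_{X,1}(u,\tau_2)$. This gives $\int_{\tau_1}^{\tau_2}(\overline{F}_{X_{\tau_1,\tau_2}}\#\overline{F}_{X_{\tau_1,\tau_2}})(x)\,dx=m_{X,1}(\tau_1,\tau_2)+\mathbb{E}[m_{X,1}(X,\tau_2)\mid\tau_1\le X\le\tau_2]$, and the paper concludes by invoking Theorem~1.

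Your argument is shorter and self-contained: it needs neither Fubini nor Theorem~1, and it mirrors how (26) follows from $(\overline{F}_X\#\overline{F}_X)(x)=[1+\Lambda_X(x)]\overline{F}_X(x)$ in the untruncated case. The paper's route instead yields an intermediate identity linking the relevation transform to the expected doubly truncated mean residual lifetime. Combining your pointwise identity with the paper's Fubini computation recovers Theorem~1 itself.

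Your integrability remarks are adequate. For finite $\tau_2$, $-y\ln y$ is bounded on $[0,1]$. For $\tau_2=\infty$, both pieces are nonnegative and $m_{X,1}$ is finite, so no $\infty-\infty$ arises.
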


\begin{proof}
	Using (25) and (8) we obtain
	$$ \textstyle \int_{\tau_1}^{\tau_2} (\overline{F}_{X_{\tau_1,\tau_2}}\# \overline{F}_{X_{\tau_1,\tau_2}})(x)\,dx = m_{X,1}(\tau_1,\tau_2)+\int_{\tau_1}^{\tau_2} \overline{F}_{X_{\tau_1,\tau_2}}(x) \int_{\tau_1}^x h_{X,1}(u,\tau_2)\,du\,dx. $$
	Applying Fubini’s Theorem, the last relationship can be rewritten as
	\begin{align*}
		& \textstyle \int_{\tau_1}^{\tau_2} (\overline{F}_{X_{\tau_1,\tau_2}}\# \overline{F}_{X_{\tau_1,\tau_2}})(x)\,dx = m_{X,1}(\tau_1,\tau_2) + \int_{\tau_1}^{\tau_2} \int_u^{\tau_2} h_{X,1}(u,\tau_2) \overline{F}_{X_{\tau_1,\tau_2}}(x)\,dx\,du \\
		& \textstyle \qquad \qquad = m_{X,1}(\tau_1,\tau_2) + \int_{\tau_1}^{\tau_2} \frac{f_X(u)}{\overline{F}_X(\tau_1)-\overline{F}_X(\tau_2)} \left(\int_u^{\tau_2} \frac{\overline{F}_X(x)-\overline{F}_X(\tau_2)}{\overline{F}_X(u)-\overline{F}_X(\tau_2)}\,dx\right) du \\
		& \textstyle \qquad \qquad = m_{X,1}(\tau_1,\tau_2) + \frac{1}{\overline{F}_X(\tau_1)-\overline{F}_X(\tau_2)} \int_{\tau_1}^{\tau_2} m_{X,1}(u,\tau_2) f_X(u)\,du \\
		& \textstyle \qquad \qquad = m_{X,1}(\tau_1,\tau_2) + \mathbb{E}[m_{X,1}(X,\tau_2) \mid \tau_1 \leq X\leq \tau_2],
	\end{align*}
	and hence the result follows from Theorem 1.
\end{proof}
It is easy to see that for $\tau_1=0$, $\tau_2\to\infty$, (27) is reduced to (26).

\begin{remark} 
	(i) If we consider the random variable $X_{t}=X|X>t$, then from Theorem 3 it follows that
	$$ \textstyle \mathcal{E}(X;t)=\int_t^\infty (\overline{F}_{X_{t}}\# \overline{F}_{X_{t}})(x)\,dx - m_X(t). $$
	
	\noindent (ii) For the residual lifetime random variable $X_{(t)} = X - t | X > t$, it can be easily shown that
	$$ \textstyle \int_0^\infty (\overline{F}_{X_{(t)}}\# \overline{F}_{X_{(t)}})(x)\,dx = m_X(t) + \mathbb{E}[m_X(X) \mid X > t], $$
	and since the dynamic cumulative residual entropy $\mathcal{E}(X;t)$ satisfies (see \cite{Asadi2007})
	$$\mathcal{E}(X;t)=\mathbb{E}[m_X(X) \mid X > t],$$
	it also follows that
	$$ \textstyle \mathcal{E}(X;t)=\int_0^\infty (\overline{F}_{X_{(t)}}\# \overline{F}_{X_{(t)}})(x)\,dx - m_X(t). $$
\end{remark}

The following results consider the evaluation of CRIE under increasing transformations.
 
\begin{theorem}
	Let $X$ be a non-negative absolutely continuous random variable and $Y=\varphi(X)$, where $\varphi$ is an increasing and differentiable function with continuous derivative $\varphi'$. Then, for all $(\tau_1,\tau_2)\in D_X$
	\begin{equation*}
		\textstyle H(Y;\tau_1,\tau_2) = -\int_{\varphi^{-1}(\tau_1)}^{\varphi^{-1}(\tau_2)} \varphi'(x)\frac{\overline{F}_X(x)-\overline{F}_X(\varphi^{-1}(\tau_2))}{\overline{F}_X(x)-\overline{F}_X(\varphi^{-1}(\tau_2))} \ln\left(\frac{\overline{F}_X(x)-\overline{F}_X(\varphi^{-1}(\tau_2))}{\overline{F}_X(\varphi^{-1}(\tau_1))-\overline{F}_X(\varphi^{-1}(\tau_2))}\right)\,dx.
	\end{equation*}
\end{theorem}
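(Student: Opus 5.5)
The plan is a direct change of variables in Definition 1, using the fact that an increasing transformation maps the doubly truncated survival function of $Y$ onto that of $X$ evaluated at preimages. First, since $\varphi$ is increasing (and, being differentiable with continuous $\varphi'$, we assume it is strictly increasing so that $\varphi^{-1}$ exists), we have for every $y$
\[
\overline{F}_Y(y)=\Pr(\varphi(X)>y)=\Pr\bigl(X>\varphi^{-1}(y)\bigr)=\overline{F}_X\bigl(\varphi^{-1}(y)\bigr).
\]
Consequently, for $\tau_1\le y\le \tau_2$, the survival function of $Y_{\tau_1,\tau_2}$ is
\[
\overline{F}_{Y_{\tau_1,\tau_2}}(y)=\frac{\overline{F}_X(\varphi^{-1}(y))-\overline{F}_X(\varphi^{-1}(\tau_2))}{\overline{F}_X(\varphi^{-1}(\tau_1))-\overline{F}_X(\varphi^{-1}(\tau_2))}.
\]
Here the pair $(\tau_1,\tau_2)$ must lie in $D_Y$, which is equivalent to $(\varphi^{-1}(\tau_1),\varphi^{-1}(\tau_2))\in D_X$. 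This is the condition under which the statement should be read.

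Second, substitute this expression into (5) written for $Y$, namely
\[
H(Y;\tau_1,\tau_2)=-\int_{\tau_1}^{\tau_2}\overline{F}_{Y_{\tau_1,\tau_2}}(y)\ln \overline{F}_{Y_{\tau_1,\tau_2}}(y)\,dy.
\]
Then perform the substitution $y=\varphi(x)$, so that $dy=\varphi'(x)\,dx$ and the limits $\tau_1,\tau_2$ become $\varphi^{-1}(\tau_1),\varphi^{-1}(\tau_2)$. Since $\varphi'$ is continuous and $\varphi$ is monotone, this substitution is legitimate: the integrand is bounded and continuous on the interval. Moreover, $\varphi^{-1}(\varphi(x))=x$, so the integrand becomes
\[
\varphi'(x)\,\frac{\overline{F}_X(x)-\overline{F}_X(\varphi^{-1}(\tau_2))}{\overline{F}_X(\varphi^{-1}(\tau_1))-\overline{F}_X(\varphi^{-1}(\tau_2))}\,\ln\!\left(\frac{\overline{F}_X(x)-\overline{F}_X(\varphi^{-1}(\tau_2))}{\overline{F}_X(\varphi^{-1}(\tau_1))-\overline{F}_X(\varphi^{-1}(\tau_2))}\right).
\]
This yields the claimed formula.

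The only point requiring care, and the main thing to flag, is the denominator of the first fraction in the displayed statement. It should read $\overline{F}_X(\varphi^{-1}(\tau_1))-\overline{F}_X(\varphi^{-1}(\tau_2))$, the same normalizing constant that appears inside the logarithm. As printed, with $\overline{F}_X(x)$ in the denominator, that fraction would be identically $1$, which is clearly a typographical slip. The computation above produces the corrected form.

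A brief remark on the endpoint behaviour completes the argument. At $x=\varphi^{-1}(\tau_2)$ the factor $u\ln u$ tends to $0$, so no integrability issue arises, and the convention $0\ln 0=0$ applies.
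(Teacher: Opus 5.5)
Your proposal is correct and follows essentially the same route as the paper: write (5) for $Y$, use $\overline{F}_Y(y)=\overline{F}_X(\varphi^{-1}(y))$, and substitute $y=\varphi(x)$, $dy=\varphi'(x)\,dx$. You are also right that the denominator of the first fraction should be $\overline{F}_X(\varphi^{-1}(\tau_1))-\overline{F}_X(\varphi^{-1}(\tau_2))$, as in the last display of the paper's own proof, and that the natural domain condition is $(\tau_1,\tau_2)\in D_Y$ rather than $D_X$.
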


\begin{proof}
	From (5) we have
	\begin{equation*}
		\textstyle H(Y;\tau_1,\tau_2) = -\int_{\tau_1}^{\tau_2} \frac{\overline{F}_Y(y)-\overline{F}_Y(\tau_2)}{\overline{F}_Y(\tau_1)-\overline{F}_Y(\tau_2)} \ln\left(\frac{\overline{F}_Y(y)-\overline{F}_Y(\tau_2)}{\overline{F}_Y(\tau_1)-\overline{F}_X(\tau_2)}\right)\,dy.
	\end{equation*}
	Then, by setting $y=\varphi(x)$, we obtain $\overline{F}_Y(y)=\overline{F}_X(\varphi^{-1}(y))$, implying that
	{\small 
		\begin{equation*}
			\textstyle H(Y;\tau_1,\tau_2) = -\int_{\varphi^{-1}(\tau_1)}^{\varphi^{-1}(\tau_2)} \frac{\overline{F}_X(x)-\overline{F}_X(\varphi^{-1}(\tau_2))}{\overline{F}_X(\varphi^{-1}(\tau_1))-\overline{F}_X(\varphi^{-1}(\tau_2))} \ln\left(\frac{\overline{F}_X(x)-\overline{F}_X(\varphi^{-1}(\tau_2))}{\overline{F}_X(\varphi^{-1}(\tau_1))-\overline{F}_X(\varphi^{-1}(\tau_2))}\right) \varphi'(x)\,dx,
	\end{equation*} } and hence, the result follows.
\end{proof}

\begin{proposition}
	Let $X$ be a non-negative absolutely continuous random variable and $Y=aX+b$, where $a>0$, $b\geq 0$. Then
	$$ \textstyle H(Y;\tau_1,\tau_2) = a H(X;\frac{\tau_1-b}{a},\frac{\tau_2-b}{a}). $$
\end{proposition}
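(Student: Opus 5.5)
This is a direct specialization of Theorem 4 with $\varphi(x)=ax+b$. This map is increasing and differentiable, with continuous derivative $\varphi'(x)=a$ and inverse $\varphi^{-1}(y)=(y-b)/a$. The interval $(\tau_1,\tau_2)$ is taken in $D_Y$. Since $\overline{F}_Y(y)=\overline{F}_X((y-b)/a)$, this is the same as $\bigl((\tau_1-b)/a,(\tau_2-b)/a\bigr)\in D_X$, so the right-hand side is well defined.

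Rather than quoting the final display of Theorem 4, whose printed denominator looks typographically off, I will redo the substitution step of its proof. Start from (5) for $Y$:
\[
H(Y;\tau_1,\tau_2)=-\int_{\tau_1}^{\tau_2}\frac{\overline{F}_Y(y)-\overline{F}_Y(\tau_2)}{\overline{F}_Y(\tau_1)-\overline{F}_Y(\tau_2)}\ln\!\left(\frac{\overline{F}_Y(y)-\overline{F}_Y(\tau_2)}{\overline{F}_Y(\tau_1)-\overline{F}_Y(\tau_2)}\right)dy .
\]
Write $\overline{F}_Y(y)=\Pr(aX+b>y)=\overline{F}_X((y-b)/a)$, using $a>0$. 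Then substitute $y=ax+b$, so that $dy=a\,dx$ and the limits become $(\tau_1-b)/a$ and $(\tau_2-b)/a$.

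After the substitution the integrand is exactly the CRIE integrand of $X$ on the interval $\bigl((\tau_1-b)/a,(\tau_2-b)/a\bigr)$, multiplied by the constant $a$. Pulling $a$ out of the integral and comparing with (5) gives $H(Y;\tau_1,\tau_2)=a\,H\bigl(X;\tfrac{\tau_1-b}{a},\tfrac{\tau_2-b}{a}\bigr)$.

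There is no real obstacle here. The only points needing care are to keep $a>0$, so that the orientation of the limits and the direction of the survival inequality are preserved, and to note that $(\tau_1-b)/a$ may be negative. In that case $\overline{F}_X=1$ on the negative part and the formula (5) still applies verbatim, so nonnegativity of $X$ causes no difficulty.
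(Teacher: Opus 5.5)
Your proposal is correct and is essentially the paper's argument: the paper applies Theorem 4 with $\varphi(x)=ax+b$, $\varphi^{-1}(t)=(t-b)/a$ and $\varphi'\equiv a$, and you carry out that same change of variables $y=ax+b$ directly. Redoing the substitution yourself instead of quoting Theorem 4's final display is a sensible precaution, since the first denominator in that display is indeed a typo (it should be $\overline{F}_X(\varphi^{-1}(\tau_1))-\overline{F}_X(\varphi^{-1}(\tau_2))$).
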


\begin{proof}
	Let $y=\varphi(x)=ax+b$. Then, $\varphi^{-1}(t_i)=\frac{t_i - b}{a}$ and since $a>0$, the function $\varphi$ is increasing with $\varphi'(x)=a$. Applying Theorem 4, the result follows.
\end{proof}

In the next section, we shall give bounds for $H(Y;\tau_1,\tau_2)$, some of which are derived by using the results of this section.

\section{Bounds for $\boldsymbol{H(X;\tau_1,\tau_2)}$} \label{sec4}

Since for many distributions it is very difficult to analytically compute the measure $H(X;\tau_1,\tau_2 )$, it is useful to obtain upper and lower bounds for it. In Theorem 5 by using the well-known bound
\begin{equation*} \textstyle
	\mathrm{Cov}(X,Y)\leq\sqrt{\mathrm{Var}(X)} \sqrt{\mathrm{Var}(Y)},
\end{equation*}
for two random variables $X$ and $Y$, we obtain an upper bound for $H(X;\tau_1,\tau_2 )$ in terms of the conditional variance $\mathrm{Var}(X|\tau_1 \leq X\leq \tau_2)$.

\begin{theorem} 
	Let $X$ be a non-negative absolutely continuous random variable. Then, for all $(\tau_1,\tau_2)\in D_X$
	\begin{equation} \textstyle
		H(X;\tau_1,\tau_2 )\leq\sqrt{\mathrm{Var}(X|\tau_1 \leq X\leq \tau_2)}.
	\end{equation}
\end{theorem}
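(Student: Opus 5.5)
The plan is to start from the covariance representation of Theorem 2. It writes $H(X;\tau_1,\tau_2)$ as the conditional covariance between $X$ and
\[
W=-\ln[\overline{F}_X(X)-\overline{F}_X(\tau_2)]
\]
under the conditional law of $X_{\tau_1,\tau_2}$. Applying the Cauchy--Schwarz inequality $\mathrm{Cov}(X,W)\le\sqrt{\mathrm{Var}(X)}\sqrt{\mathrm{Var}(W)}$ under this conditional law gives
\[
H(X;\tau_1,\tau_2)\le \sqrt{\mathrm{Var}(X\mid\tau_1\le X\le\tau_2)}\,\sqrt{\mathrm{Var}(W\mid\tau_1\le X\le\tau_2)}.
\]
It then suffices to show that the conditional variance of $W$ equals $1$.

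To compute this variance, I will shift $W$ by a constant, which does not change it. Set
\[
V=W+\ln[\overline{F}_X(\tau_1)-\overline{F}_X(\tau_2)]=-\ln \overline{F}_{X_{\tau_1,\tau_2}}(X)=\Lambda_X(X;\tau_1,\tau_2).
\]
Since $X_{\tau_1,\tau_2}$ is absolutely continuous, $U=\overline{F}_{X_{\tau_1,\tau_2}}(X_{\tau_1,\tau_2})$ is uniform on $(0,1)$. Hence $V=-\ln U$ is standard exponential, so $\mathrm{Var}(V)=1$. This is consistent with (20), which already gives $\mathbb{E}[V\mid \tau_1\le X\le\tau_2]=1$.

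If I want to avoid the probability integral transform, I can get the second moment directly. I would compute $\mathbb{E}[V^2\mid\cdot]$ by the same integration by parts as in (20), with $\ln^2$ in place of $\ln$, and obtain $2$. Combining this with the first moment gives variance $2-1=1$, and substituting into the Cauchy--Schwarz bound yields (28).

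The only delicate point is justifying that the conditional variances are finite and that the integrations by parts are legitimate at the endpoint $\tau_2$, where $\ln[\overline{F}_X(x)-\overline{F}_X(\tau_2)]\to-\infty$. The boundary terms are of the form $u\ln u$ and $u\ln^2 u$ as $u\to0$, and these vanish, so no real obstacle is expected. The variance of $X$ is finite because $X_{\tau_1,\tau_2}$ is supported on the bounded interval $[\tau_1,\tau_2]$ when $\tau_2<\infty$; for $\tau_2=\infty$ the bound is trivial if the variance is infinite.
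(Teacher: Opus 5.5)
Your proposal is correct and is essentially the paper's proof: the covariance representation of Theorem 2, the Cauchy--Schwarz inequality, and the identity $\mathrm{Var}(\ln[\overline{F}_X(X)-\overline{F}_X(\tau_2)]\mid\tau_1\le X\le\tau_2)=1$. The paper proves that identity by the integration-by-parts route you list as your fallback: it finds the second moment $(\ln c)^2-2\ln c+2$ with $c=\overline{F}_X(\tau_1)-\overline{F}_X(\tau_2)$ and combines it with the mean from (20). Your observation that $-\ln\overline{F}_{X_{\tau_1,\tau_2}}(X)$ is standard exponential under the conditional law gives the same fact more directly.
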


\begin{proof}
	From Theorem 2 it follows that
	\begin{align}
		H(X;\tau_1,\tau_2 ) & =  \textstyle\mathrm{Cov}(X,-\ln[\overline{F}_X (X)-\overline{F}_X (\tau_2)]|\tau_1 \leq X\leq \tau_2 ) \nonumber \\
		&  \textstyle \leq \sqrt{\mathrm{Var}(X|\tau_1 \leq X\leq \tau_2)} \sqrt{\mathrm{Var}(-\ln[\overline{F}_X (X)-\overline{F}_X (\tau_2)]|\tau_1 \leq X\leq \tau_2 )} \nonumber \\
		& =  \textstyle \sqrt{\mathrm{Var}(X|\tau_1 \leq X\leq \tau_2)} \sqrt{\mathrm{Var}(\ln[\overline{F}_X (X)-\overline{F}_X (\tau_2)]|\tau_1 \leq X\leq \tau_2 )}.
	\end{align}
	Since
	\begin{align*}
		&  \textstyle\mathbb{E}\{(\ln[\overline{F}_X (X)-\overline{F}_X (\tau_2)])^2 |\tau_1 \leq X\leq \tau_2 \} \\
		&  \textstyle\qquad = -\int_{\tau_1}^{\tau_2} \frac{(\ln[\overline{F}_X (x)-\overline{F}_X (\tau_2)])^2}{\overline{F}_X (\tau_1 )-\overline{F}_X (\tau_2 )} \left( \overline{F}_X (x)-\overline{F}_X (\tau_2 ) \right)' dx \\
		&  \textstyle\qquad = (\ln[\overline{F}_X (\tau_1 )-\overline{F}_X (\tau_2)])^2 -2 \int_{\tau_1}^{\tau_2} \frac{\ln[\overline{F}_X (x)-\overline{F}_X (\tau_2 )]}{\overline{F}_X (\tau_1 )-\overline{F}_X (\tau_2 )} f_X (x)dx \\
		&  \textstyle\qquad = (\ln[\overline{F}_X (\tau_1 )-\overline{F}_X (\tau_2)])^2 -2\mathbb{E}(\ln[\overline{F}_X (X)-\overline{F}_X (\tau_2)]|\tau_1 \leq X\leq \tau_2 ) \\
		&  \textstyle \qquad = (\ln[\overline{F}_X (\tau_1 )-\overline{F}_X (\tau_2)])^2 -2\ln[\overline{F}_X (\tau_1 )-\overline{F}_X (\tau_2)]+2,
	\end{align*}
	where the last equality follows from (20). Using again (20) and the last relationship, it follows that
	\begin{align}
		& \quad \mathrm{Var}(\ln[\overline{F}_X (X)-\overline{F}_X (\tau_2)]|\tau_1 \leq X\leq \tau_2 ) \nonumber \\
		& \qquad = \mathbb{E}\{(\ln[\overline{F}_X (X)-\overline{F}_X (\tau_2)])^2 |\tau_1 \leq X\leq \tau_2 \} \nonumber \\
		& \qquad \quad -\{\mathbb{E}(\ln[\overline{F}_X (X)-\overline{F}_X (\tau_2)]|\tau_1 \leq X\leq \tau_2 )\}^2 \nonumber \\
		& \qquad = (\ln[\overline{F}_X (\tau_1 )-\overline{F}_X (\tau_2)])^2 -2\ln[\overline{F}_X (\tau_1 )-\overline{F}_X (\tau_2)]+2 \nonumber \\
		& \qquad \quad -(\ln[\overline{F}_X (\tau_1 )-\overline{F}_X (\tau_2)]-1)^2 \nonumber \\
		& \qquad = 1.
	\end{align}
	Now, (28) follows directly from (29) and (30).
\end{proof}

From Theorem 5 it follows that if $\mathrm{Var}(X|\tau_1 \leq X\leq \tau_2)<\infty$, then $H(X;\tau_1,\tau_2 )$ is finite. Also, from Theorem 5 we get the following corollary.

\begin{corollary}
	Let $X$ be a non-negative absolutely continuous random variable. Then
	$$ \textstyle \mathcal{E}(X;t)\leq\sqrt{\mathrm{Var}(X|X>t)}. $$
\end{corollary}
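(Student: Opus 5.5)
This corollary is the special case $\tau_1=t$, $\tau_2\to\infty$ of Theorem 5, so the plan is to pass to that limit in (28). Recall that $H(X;t,\infty)=\mathcal{E}(X;t)$, as noted after Definition 1. When $\overline{F}_X(\tau_2)=0$, the conditioning event $\{t\le X\le\tau_2\}$ becomes $\{X>t\}$ up to a null set, since $X$ is absolutely continuous. Hence the right-hand side of (28) becomes $\sqrt{\mathrm{Var}(X\mid X>t)}$.

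To avoid interchanging a limit with the integrals in (5), I would not take a limit at all. Instead I would rerun the proof of Theorem 5 directly with $\overline{F}_X(\tau_2)$ replaced by $0$ and $\tau_2$ by $\infty$.

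\textbf{Covariance step.} Corollary 1 gives $\mathcal{E}(X;t)=\mathrm{Cov}(X,-\ln\overline{F}_X(X)\mid X>t)$. The Cauchy--Schwarz inequality then bounds this by
\[
\sqrt{\mathrm{Var}(X\mid X>t)}\,\sqrt{\mathrm{Var}(\ln\overline{F}_X(X)\mid X>t)}.
\]

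\textbf{Variance step.} Conditionally on $X>t$, the variable $\overline{F}_X(X)/\overline{F}_X(t)$ is uniform on $(0,1)$. Therefore $-\ln\overline{F}_X(X)+\ln\overline{F}_X(t)$ is standard exponential, and its variance is $1$. This is exactly the computation (30) with $\overline{F}_X(\tau_2)=0$. Substituting gives the stated inequality.

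The only point needing care is justifying Corollary 1 when $\tau_2=\infty$. If $\mathrm{Var}(X\mid X>t)=\infty$, the bound is trivial. Otherwise $E(X^2)<\infty$, and the integrations by parts in (20)–(21) have vanishing boundary terms at infinity. Specifically, $x\,\overline{F}_X(x)\ln\overline{F}_X(x)\to0$ as $x\to\infty$, which follows from the finite second moment: $\overline{F}_X(x)=o(x^{-2})$, and $u\ln u\to0$. I expect this boundary check to be the only real obstacle, and it is routine.
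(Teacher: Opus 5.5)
Your proposal is correct and follows the paper's route. The paper obtains this corollary as the $\tau_2\to\infty$ case of Theorem 5, whose proof is exactly the covariance representation (Theorem 2, equivalently Corollary 1), the Cauchy--Schwarz inequality, and the fact that the conditional variance of the log term equals $1$; your probability-integral-transform argument is just a quicker way of obtaining (30), and your check that $x\overline{F}_X(x)\ln\overline{F}_X(x)\to 0$ under a finite second moment supplies the rigor that the paper's informal passage to the limit omits.
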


Using again Theorem 2 we can also obtain a different upper bound for $H(X;\tau_1,\tau_2 )$ in terms of $\mathbb{E}[|X-\mathbb{E}(X|\tau_1 \leq X\leq \tau_2 )||\tau_1 \leq X\leq \tau_2 ]$.

\begin{theorem}
	Let $X$ be a non-negative absolutely continuous random variable. Then, for all $(\tau_1,\tau_2)\in D_X$
	\begin{align}
		H(X;\tau_1,\tau_2 ) & \textstyle \leq 2\mathbb{E}[|X-\mathbb{E}(X|\tau_1 \leq X\leq \tau_2 )|\ln|X-\mathbb{E}(X|\tau_1 \leq X\leq \tau_2 )||\tau_1 \leq X\leq \tau_2 ] \nonumber \\
		& \textstyle \quad + \frac{4e^{-1}}{\sqrt{\overline{F}_X (\tau_1 )-\overline{F}_X (\tau_2)}}.
	\end{align}
\end{theorem}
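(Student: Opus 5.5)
The plan is to start from the covariance representation of Theorem 2 and then apply a Young-type inequality pointwise. Write $p=\overline{F}_X(\tau_1)-\overline{F}_X(\tau_2)$, $G(x)=\overline{F}_X(x)-\overline{F}_X(\tau_2)$ and $\mu=\mu_X(\tau_1,\tau_2)$. Since $\mathbb{E}[X-\mu\mid\tau_1\le X\le\tau_2]=0$, Theorem 2 gives
\[
H(X;\tau_1,\tau_2)=\mathbb{E}\big[(X-\mu)\,(-\ln G(X))\mid \tau_1\le X\le\tau_2\big]\le \mathbb{E}\big[|X-\mu|\,|\ln G(X)|\mid \tau_1\le X\le\tau_2\big].
\]
On $[\tau_1,\tau_2]$ we have $0\le G(x)\le p\le 1$. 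Hence $|\ln G(X)|=-\ln G(X)$.

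The key pointwise tool is the conjugate-function (Young) inequality
\[
uv\le u\ln u+e^{v-1},\qquad u\ge 0,\ v\in\mathbb{R}.
\]
It follows because $\sup_{u\ge 0}(uv-u\ln u)=e^{v-1}$, the maximum being attained at $u=e^{v-1}$. It holds even when $u\ln u<0$, so no case split is needed. Apply it with $u=|X-\mu|$ and $v=-\tfrac12\ln G(X)$, after writing $|a||b|=2\,|a|\,(|b|/2)$. This gives
\[
|X-\mu|\,|\ln G(X)|\le 2|X-\mu|\ln|X-\mu|+2e^{-1}\,G(X)^{-1/2}.
\]
The factor $\tfrac12$ inside the exponential is chosen to make the last term integrable with a clean closed form.

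It remains to compute the conditional expectation of $G(X)^{-1/2}$. Substituting $s=G(x)$, with $ds=-f_X(x)\,dx$, gives
\[
\mathbb{E}[G(X)^{-1/2}\mid\tau_1\le X\le\tau_2]=\frac1p\int_{\tau_1}^{\tau_2}\frac{f_X(x)}{\sqrt{G(x)}}\,dx=\frac1p\int_0^{p}s^{-1/2}\,ds=\frac{2}{\sqrt p}.
\]
Taking conditional expectations in the pointwise bound then yields
\[
H(X;\tau_1,\tau_2)\le 2\,\mathbb{E}\big[|X-\mu|\ln|X-\mu|\mid\tau_1\le X\le\tau_2\big]+\frac{4e^{-1}}{\sqrt p},
\]
which is (31).

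The main thing to get right is the choice of scaling in Young's inequality. Using $e^{v}$ without the factor $\tfrac12$ would produce $\mathbb{E}[1/G(X)]$, which diverges logarithmically. Using $uv\le u\ln u-u+e^{v}$ instead would give the constant $4$ rather than $4e^{-1}$. So the variant $uv\le u\ln u+e^{v-1}$ with the split $2\cdot(|b|/2)$ is exactly what reproduces both the factor $2$ and the constant $4e^{-1}/\sqrt p$. One should also remark that the first expectation on the right-hand side is taken to be finite, for instance when $X$ is conditionally bounded on $[\tau_1,\tau_2]$; otherwise the bound is trivial.
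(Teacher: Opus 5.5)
Your proposal is correct and follows the paper's proof. Both rewrite Theorem 2 as $\mathbb{E}[(X-\mu)(-\ln G(X))\mid\tau_1\le X\le\tau_2]$, apply Rao's pointwise step (exactly your inequality $uv\le u\ln u+e^{v-1}$ with $v=\tfrac12|\ln G(X)|$), and then compute $\mathbb{E}[G(X)^{-1/2}\mid\tau_1\le X\le\tau_2]=2/\sqrt{p}$ via $s=G(x)$. Stating that inequality explicitly also clarifies the paper's display (32), whose second term is misprinted and should read $2e^{-1}\,\mathbb{E}[G(X)^{-1/2}\mid\tau_1\le X\le\tau_2]$.
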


\begin{proof}
	From (18) and (19) it follows that $H(X;\tau_1,\tau_2 )$ can be equivalently rewritten as
	$$ \textstyle H(X;\tau_1,\tau_2 ) = \mathbb{E}\left\{[X-\mathbb{E}(X|\tau_1 \leq X\leq \tau_2 )](-\ln[\overline{F}_X (X)-\overline{F}_X (\tau_2)])|\tau_1 \leq X\leq \tau_2 \right\}, $$
	and following similar steps as in Rao \cite[Section 6]{Rao2005} we obtain
	\begin{align}
		H(X;\tau_1,\tau_2 ) \leq & \textstyle 2\mathbb{E}(|X-\mathbb{E}(X|\tau_1 \leq X\leq \tau_2 )|\ln|X-\mathbb{E}(X|\tau_1 \leq X\leq \tau_2 )||\tau_1 \leq X\leq \tau_2 ) \nonumber \\
		& \textstyle \ + 2\mathbb{E}\left\{-\exp\left(-\sqrt{\ln[\overline{F}_X (X)-\overline{F}_X (\tau_2)]}-1\right)|\tau_1 \leq X\leq \tau_2 \right\}.
	\end{align}
	It holds
	\begin{align*}
		& \textstyle \mathbb{E}\left\{-\exp\left(-\sqrt{\ln[\overline{F}_X (X)-\overline{F}_X (\tau_2)]}-1\right)|\tau_1 \leq X\leq \tau_2 \right\} \\
		& \textstyle \qquad = e^{-1} \int_{\tau_1}^{\tau_2} \frac{[\overline{F}_X (x)-\overline{F}_X (\tau_2 )]^{-1/2}}{\overline{F}_X (\tau_1 )-\overline{F}_X (\tau_2)} f_X (x)\,dx \\
		& \textstyle \qquad = e^{-1} \int_0^{\overline{F}_X (\tau_1 )-\overline{F}_X (\tau_2)} \frac{u^{-1/2}}{\overline{F}_X (\tau_1 )-\overline{F}_X (\tau_2)}\,du \\
		& \textstyle \qquad = \frac{2e^{-1}}{\sqrt{\overline{F}_X (\tau_1 )-\overline{F}_X (\tau_2)}},
	\end{align*}
	and hence the bound in (31) follows directly from (32) and the last relationship.
\end{proof}

When $\tau_1=0$ and $\tau_2 \to \infty$, the bound in (31) is reduced to the upper bound for $\mathcal{E}(X)$ given in Rao \cite[Section 6]{Rao2005}. When $\tau_2 \to \infty$, from Theorem 6 we immediately obtain the following corollary.

\begin{corollary}
	Let $X$ be a non-negative absolutely continuous random variable. Then
	$$ \textstyle \mathcal{E}(X;t) \leq 2\mathbb{E}(|X - \mathbb{E}(X \mid X > t)| \ln |X - \mathbb{E}(X \mid X > t)| \mid X > t) + \frac{4e^{-1}}{\sqrt{\overline{F}_X(t)}}. $$
\end{corollary}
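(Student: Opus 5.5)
This corollary is the special case $\tau_2\to\infty$ of Theorem 6, so the plan is to pass to that limit in the bound (31), with $\tau_1=t$ fixed. By the remark after Definition 1, $H(X;t,\infty)=\mathcal{E}(X;t)$. Since $\overline{F}_X(\tau_2)\to 0$, the constant $\frac{4e^{-1}}{\sqrt{\overline{F}_X(t)-\overline{F}_X(\tau_2)}}$ converges to $\frac{4e^{-1}}{\sqrt{\overline{F}_X(t)}}$. Also $\mathbb{E}(X\mid t\le X\le\tau_2)\to\mathbb{E}(X\mid X>t)$, using absolute continuity to identify $\{X\ge t\}$ with $\{X>t\}$ and assuming $\mathbb{E}(X\mid X>t)<\infty$.

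A cleaner route avoids limits altogether. Rerun the proof of Theorem 6 with $\tau_2=\infty$, i.e. with $\overline{F}_X(\tau_2)$ replaced by $0$. By Corollary 1,
\[
\mathcal{E}(X;t)=\mathbb{E}\{[X-\mathbb{E}(X\mid X>t)](-\ln\overline{F}_X(X))\mid X>t\}.
\]
Rao's pointwise inequality from \cite[Section 6]{Rao2005} is applied to the integrand exactly as in (32). The remaining exponential term is computed by the substitution $u=\overline{F}_X(x)$, which gives
\[
e^{-1}\int_0^{\overline{F}_X(t)}\frac{u^{-1/2}}{\overline{F}_X(t)}\,du=\frac{2e^{-1}}{\sqrt{\overline{F}_X(t)}}.
\]
Doubling this yields the stated constant $\frac{4e^{-1}}{\sqrt{\overline{F}_X(t)}}$.

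The main obstacle, if one insists on the limiting argument, is the convergence of the term $\mathbb{E}[|X-\mu|\ln|X-\mu|\mid t\le X\le\tau_2]$. The integrand $|x-\mu|\ln|x-\mu|$ is negative near $\mu$, so monotone convergence does not apply directly. Splitting the integrand into positive and negative parts handles this: the negative part is bounded below by $-e^{-1}$, so dominated convergence applies to it, and monotone convergence applies to the positive part. The direct re-derivation sidesteps this issue entirely, so I would present that version and simply note that it coincides with the $\tau_2\to\infty$ specialization of (31).
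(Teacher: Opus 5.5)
Your proposal is correct and is essentially the paper's argument. The paper simply lets $\tau_2\to\infty$ in Theorem~6, and your direct version is just the proof of Theorem~6 run with $\overline{F}_X(\tau_2)$ replaced by $0$: Corollary~1, the Young-type inequality $|X-\mu|\,b\le 2|X-\mu|\ln|X-\mu|+2e^{b/2-1}$ with $b=-\ln\overline{F}_X(X)\ge 0$, and $\int_0^{\overline{F}_X(t)}u^{-1/2}\,du=2\sqrt{\overline{F}_X(t)}$. Running it directly avoids the convergence questions the paper leaves implicit.

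One caveat concerns the limiting route you set aside. The centre $\mathbb{E}(X\mid t\le X\le\tau_2)$ moves with $\tau_2$, so the positive part of $|x-\mu|\ln|x-\mu|$ is not monotone in $\tau_2$, and monotone convergence does not apply to it. You would need dominated convergence there as well. For example, dominate by the positive part of $(x+\mu_\infty)\ln(x+\mu_\infty)$ with $\mu_\infty=\mathbb{E}(X\mid X>t)$; this is integrable whenever the right-hand side is finite. The left-hand side only needs Fatou's lemma, $\mathcal{E}(X;t)\le\liminf_{\tau_2\to\infty}H(X;t,\tau_2)$.
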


\begin{proposition}
	Let $X$ be a non-negative absolutely continuous random variable. Then, for all $(\tau_1,\tau_2)\in D_X$
	$$ \textstyle H(X;\tau_1,\tau_2) \leq m_{X,1}(\tau_1,\tau_2) \ln\left(\frac{\tau_2 - \tau_1}{m_{X,1}(\tau_1,\tau_2)}\right). $$
\end{proposition}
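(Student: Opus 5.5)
The approach is Jensen's inequality (concavity of $\phi(u)=-u\ln u$), or equivalently the log-sum inequality, applied to the normalized function $\overline{F}_{X_{\tau_1,\tau_2}}$ on the interval $[\tau_1,\tau_2]$.

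Write $G(x)=\overline{F}_{X_{\tau_1,\tau_2}}(x)=\frac{\overline{F}_X(x)-\overline{F}_X(\tau_2)}{\overline{F}_X(\tau_1)-\overline{F}_X(\tau_2)}$ for $\tau_1\le x\le\tau_2$. By (8) we have $\int_{\tau_1}^{\tau_2}G(x)\,dx=m_{X,1}(\tau_1,\tau_2)$, and by (5) we have $H(X;\tau_1,\tau_2)=\int_{\tau_1}^{\tau_2}\phi(G(x))\,dx$, where $\phi(u)=-u\ln u$. Since $\phi$ is concave on $[0,1]$, Jensen's inequality with respect to the uniform probability measure $dx/(\tau_2-\tau_1)$ on $[\tau_1,\tau_2]$ gives
\[
\frac{1}{\tau_2-\tau_1}\int_{\tau_1}^{\tau_2}\phi(G(x))\,dx\le \phi\left(\frac{1}{\tau_2-\tau_1}\int_{\tau_1}^{\tau_2}G(x)\,dx\right)=\phi\left(\frac{m_{X,1}(\tau_1,\tau_2)}{\tau_2-\tau_1}\right).
\]
Multiplying by $\tau_2-\tau_1$ then yields
\[
H(X;\tau_1,\tau_2)\le -m_{X,1}(\tau_1,\tau_2)\ln\frac{m_{X,1}(\tau_1,\tau_2)}{\tau_2-\tau_1}=m_{X,1}(\tau_1,\tau_2)\ln\left(\frac{\tau_2-\tau_1}{m_{X,1}(\tau_1,\tau_2)}\right),
\]
which is the claim.

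The only technical issue is that $\tau_2$ may be infinite, for example when $\tau_2\to\infty$. In that case the uniform measure does not exist, so the argument requires $\tau_2<\infty$. This is implicit in the statement, since the right-hand side involves $\tau_2-\tau_1$. We also need $m_{X,1}(\tau_1,\tau_2)>0$, which holds on $D_X$ because $G>0$ near $\tau_1$. An alternative derivation uses the log-sum inequality $\int G\ln(G/1)\ge (\int G)\ln(\int G/\int 1)$, which gives the same bound.
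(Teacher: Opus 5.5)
Your proof is correct and is essentially the paper's argument. The paper applies the integral log-sum inequality to $\overline{F}_X(x)-\overline{F}_X(\tau_2)$ against the constant $\overline{F}_X(\tau_1)-\overline{F}_X(\tau_2)$ on $[\tau_1,\tau_2]$, which is exactly your Jensen step for the concave $-u\ln u$ under the uniform measure, and then uses (8) to identify the integral as $m_{X,1}(\tau_1,\tau_2)$.
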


\begin{proof}
	From Log-Sum inequality we have
	\begin{align*}
		& \textstyle \int_{\tau_1}^{\tau_2} [\overline{F}_X(x) - \overline{F}_X(\tau_2)] \ln\left( \frac{\overline{F}_X(x) - \overline{F}_X(\tau_2)}{\overline{F}_X(\tau_1) - \overline{F}_X(\tau_2)} \right) dx \\
		& \textstyle \qquad \geq \left( \int_{\tau_1}^{\tau_2} [\overline{F}_X(x) - \overline{F}_X(\tau_2)] dx \right) \ln \left( \frac{ \int_{\tau_1}^{\tau_2} [\overline{F}_X(x) - \overline{F}_X(\tau_2)] dx }{ (\tau_2 - \tau_1)[\overline{F}_X(\tau_1) - \overline{F}_X(\tau_2)] } \right).
	\end{align*}
	Using (8), the above relationship yields
	\begin{align*}
		& \textstyle \int_{\tau_1}^{\tau_2} [\overline{F}_X(x) - \overline{F}_X(\tau_2)] \ln\left( \frac{\overline{F}_X(x) - \overline{F}_X(\tau_2)}{\overline{F}_X(\tau_1) - \overline{F}_X(\tau_2)} \right) dx \\ 
		& \textstyle \qquad \geq m_{X,1}(\tau_1,\tau_2)[\overline{F}_X(\tau_1) - \overline{F}_X(\tau_2)] \ln\left( \frac{m_{X,1}(\tau_1,\tau_2)}{\tau_2 - \tau_1} \right),
	\end{align*}
	which proves the upper bound.
\end{proof}

Using the inequality $1 - \frac{1}{x} \leq \ln x \leq x - 1$ for $x \geq 0$, from the definition of $H(X;\tau_1,\tau_2)$ we can easily obtain a two-sided bound for $H(X;\tau_1,\tau_2)$ given in the next proposition.

\begin{proposition}
	Let $X$ be a non-negative absolutely continuous random variable. Then, for all $(\tau_1,\tau_2)\in D_X$
	\begin{align*}
		\textstyle \frac{1}{[\overline{F}_X(\tau_1) - \overline{F}_X(\tau_2)]^2} \int_{\tau_1}^{\tau_2} [\overline{F}_X(x) - \overline{F}_X(\tau_2)][\overline{F}_X(\tau_1) - \overline{F}_X(x)] dx \leq H(X;\tau_1,\tau_2) \leq m_{X,2}(\tau_1,\tau_2).
	\end{align*}
\end{proposition}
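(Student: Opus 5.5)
Put $u(x)=\overline{F}_{X_{\tau_1,\tau_2}}(x)=\frac{\overline{F}_X(x)-\overline{F}_X(\tau_2)}{\overline{F}_X(\tau_1)-\overline{F}_X(\tau_2)}$. For $\tau_1\le x\le\tau_2$ we have $0\le u(x)\le 1$, and by (5), $H(X;\tau_1,\tau_2)=-\int_{\tau_1}^{\tau_2}u(x)\ln u(x)\,dx$. The plan is to bound the integrand $-u\ln u$ pointwise on both sides, using the two halves of $1-\frac1x\le\ln x\le x-1$ with $x=u$. Then integrate and identify the resulting integrals with known quantities.

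For the upper bound, apply $1-\frac1u\le \ln u$ for $u\in(0,1]$, which is the same as $-\ln u\le \frac1u-1$. Multiplying by $u\ge 0$ gives $-u\ln u\le 1-u$. At points where $u=0$ this still holds, since the left side is $0$ under the convention $0\ln 0=0$. Integrating gives
$$H(X;\tau_1,\tau_2)\le\int_{\tau_1}^{\tau_2}[1-u(x)]\,dx=(\tau_2-\tau_1)-m_{X,1}(\tau_1,\tau_2),$$
where the last equality uses representation (8) of $m_{X,1}$. By (10), $\tau_1+m_{X,1}=\tau_2-m_{X,2}$, so the right-hand side equals $m_{X,2}(\tau_1,\tau_2)$. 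This is the stated upper bound.

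For the lower bound, apply $\ln u\le u-1$, i.e.\ $-\ln u\ge 1-u$. Multiplying by $u\ge0$ gives $-u\ln u\ge u(1-u)$, again valid at $u=0$. Now compute
$$1-u(x)=\frac{\overline{F}_X(\tau_1)-\overline{F}_X(x)}{\overline{F}_X(\tau_1)-\overline{F}_X(\tau_2)},$$
so that
$$u(x)[1-u(x)]=\frac{[\overline{F}_X(x)-\overline{F}_X(\tau_2)][\overline{F}_X(\tau_1)-\overline{F}_X(x)]}{[\overline{F}_X(\tau_1)-\overline{F}_X(\tau_2)]^2}.$$
Integrating over $[\tau_1,\tau_2]$ gives exactly the stated lower bound.

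Both steps are elementary. The only point needing care is the identification $\int_{\tau_1}^{\tau_2}(1-u)\,dx=m_{X,2}(\tau_1,\tau_2)$, which follows directly from (8) and (10). One also checks that the inequalities remain valid at the endpoints where $u\in\{0,1\}$, and they do.
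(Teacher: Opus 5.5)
Your proof is correct and is the same argument the paper uses: it also applies $1-\frac1x\le\ln x\le x-1$ pointwise to $\overline{F}_{X_{\tau_1,\tau_2}}$ in definition (5) and then integrates over $[\tau_1,\tau_2]$. You additionally spell out two details the paper leaves to the reader: the identification $\int_{\tau_1}^{\tau_2}(1-u)\,dx=m_{X,2}(\tau_1,\tau_2)$ via (8) and (10), and the check at points where $u=0$.
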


\begin{remark}  
	(i) Note that the lower bound in Proposition 3 can be equivalently rewritten as
	$$ \textstyle H(X;\tau_1,\tau_2) \geq \int_{\tau_1}^{\tau_2} \overline{F}_{X_{\tau_1,\tau_2}}(x) F_{X_{\tau_1,\tau_2}}(x) dx. $$
	
	\noindent (ii) When $\tau_1=0$, $\tau_2 \to \infty$, from Proposition 3 we obtain that
	$$ \textstyle \mathcal{E}(X) \geq \int_0^\infty \overline{F}_X(x) F_X(x) dx, $$
	which is also given in Rao \cite[rel. (5)]{Rao2005}.
	
	\noindent (iii) When $\tau_2 \to \infty$, from Proposition 2 we obtain the following lower bound for the dynamic cumulative residual entropy of $X$.
	$$ \textstyle \mathcal{E}(X;t) \geq m_X(t) - \frac{1}{[\overline{F}_X(t)]^2} \int_t^\infty [\overline{F}_X(x)]^2 dx. $$
\end{remark}

A different lower bound for $H(X;\tau_1,\tau_2)$ than that obtained in Proposition 3 is given in Proposition 4.

\begin{proposition}
	Let $X$ be a non-negative absolutely continuous random variable. Then, for all $(\tau_1,\tau_2)\in D_X$
	$$ \textstyle H(X;\tau_1,\tau_2) \geq \frac{1}{\overline{F}_X(\tau_1) - \overline{F}_X(\tau_2)} \mathbb{E}\left\{ [\overline{F}_X(X) - \overline{F}_X(\tau_2)] m_{X,1}(X,\tau_2) \mid \tau_1 \leq X \leq \tau_2 \right\}. $$
\end{proposition}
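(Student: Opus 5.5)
Write $\Delta=\overline{F}_X(\tau_1)-\overline{F}_X(\tau_2)$ and $G(x)=\overline{F}_X(x)-\overline{F}_X(\tau_2)$, so that $\overline{F}_{X_{\tau_1,\tau_2}}(x)=G(x)/\Delta\in[0,1]$ on $[\tau_1,\tau_2]$. The plan is to compare pointwise with the elementary inequality $-\ln u\geq 1-u$ for $u\in(0,1]$, in the same spirit as Proposition 3, and then to recognise the resulting integral through the Fubini computation used in the proof of Theorem 1. (The lower bound of Proposition 3 in fact comes from the same inequality.)

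\textbf{Step 1 (pointwise bound).} Apply the inequality with $u=G(x)/\Delta$ in definition (5):
$$H(X;\tau_1,\tau_2)\geq \int_{\tau_1}^{\tau_2}\frac{G(x)}{\Delta}\Big(1-\frac{G(x)}{\Delta}\Big)dx=\frac{1}{\Delta^2}\int_{\tau_1}^{\tau_2}G(x)\,[\Delta-G(x)]\,dx .$$

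\textbf{Step 2 (rewrite $\Delta-G(x)$).} Note that $\Delta-G(x)=\overline{F}_X(\tau_1)-\overline{F}_X(x)=\int_{\tau_1}^{x}f_X(t)\,dt$.

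\textbf{Step 3 (Fubini).} Substitute and swap the order of integration:
$$\frac{1}{\Delta^2}\int_{\tau_1}^{\tau_2} f_X(t)\Big(\int_t^{\tau_2}G(x)\,dx\Big)dt .$$

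\textbf{Step 4 (identify the inner integral).} By (8), $\int_t^{\tau_2}G(x)\,dx=G(t)\,m_{X,1}(t,\tau_2)$. Hence the bound equals
$$\frac{1}{\Delta}\cdot\frac{1}{\Delta}\int_{\tau_1}^{\tau_2}G(t)\,m_{X,1}(t,\tau_2)\,f_X(t)\,dt=\frac{1}{\Delta}\,\mathbb{E}\big\{[\overline{F}_X(X)-\overline{F}_X(\tau_2)]\,m_{X,1}(X,\tau_2)\mid\tau_1\leq X\leq\tau_2\big\},$$
which is exactly the claimed right-hand side.

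The only delicate point is justifying the interchange in Step 3, which needs nonnegative integrands. This holds because $G\geq 0$ and $f_X\geq 0$ on the interval, so Tonelli's theorem applies. Points where $G(t)=0$ contribute nothing, so the identification in Step 4 at such points causes no issue.
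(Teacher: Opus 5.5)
Your proof is correct and is essentially the paper's argument. The paper's bound $h_{X,1}(u,\tau_2)\ge f_X(u)/[\overline{F}_X(\tau_1)-\overline{F}_X(\tau_2)]$, once integrated from $\tau_1$ to $x$, is exactly $-\ln u\ge 1-u$ with $u=\overline{F}_{X_{\tau_1,\tau_2}}(x)$, and Theorem 3 combined with (25) simply recovers definition (5), after which its Fubini step and identification via (8) match yours. Your direct presentation also shows that the intermediate quantity in your Step 1 is precisely the lower bound of Proposition 3, so the two bounds coincide, contrary to the paper's description of Proposition 4 as giving a different bound.
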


\begin{proof}
	For $u \geq \tau_1$ it holds $\overline{F}_X(u) \leq \overline{F}_X(\tau_1)$, and thus
	$$ \textstyle \int_{\tau_1}^x h_{X,1}(u,\tau_2) du = \int_{\tau_1}^x \frac{f_X(u)}{\overline{F}_X(u) - \overline{F}_X(\tau_2)} du \geq \int_{\tau_1}^x \frac{f_X(u)}{\overline{F}_X(\tau_1) - \overline{F}_X(\tau_2)} du. $$
	Therefore, from (25) we get
	\begin{align*}
		\textstyle \int_{\tau_1}^{\tau_2} \left( \overline{F}_{X_{\tau_1,\tau_2}} \# \overline{F}_{X_{\tau_1,\tau_2}} \right)(x) dx \geq \int_{\tau_1}^{\tau_2} \overline{F}_{X_{\tau_1,\tau_2}}(x) dx + \frac{1}{\overline{F}_X(\tau_1) - \overline{F}_X(\tau_2)} \int_{\tau_1}^{\tau_2} \overline{F}_{X_{\tau_1,\tau_2}}(x) \int_{\tau_1}^x f_X(u) du \, dx,
	\end{align*}
	or equivalently, by using Theorem 3
	$$ \textstyle H(X;\tau_1,\tau_2) \geq \frac{1}{\overline{F}_X(\tau_1) - \overline{F}_X(\tau_2)} \int_{\tau_1}^{\tau_2} \overline{F}_{X_{\tau_1,\tau_2}}(x) \int_{\tau_1}^x f_X(u) du \, dx. $$
	Applying Fubini’s Theorem, it follows that
	$$ \textstyle H(X;\tau_1,\tau_2) \geq \frac{1}{\overline{F}_X(\tau_1) - \overline{F}_X(\tau_2)} \int_{\tau_1}^{\tau_2} f_X(u) \int_u^{\tau_2} \overline{F}_{X_{\tau_1,\tau_2}}(x) dx \, du, $$
	and since
	\begin{align*}
		\textstyle \int_u^{\tau_2} \overline{F}_{X_{\tau_1,\tau_2}}(x) dx = \textstyle \frac{1}{\overline{F}_X(\tau_1) - \overline{F}_X(\tau_2)} \int_u^{\tau_2} [\overline{F}_X(x) - \overline{F}_X(\tau_2)] dx = \textstyle \frac{\overline{F}_X(u) - \overline{F}_X(\tau_2)}{\overline{F}_X(\tau_1) - \overline{F}_X(\tau_2)} m_{X,1}(u,\tau_2),
	\end{align*}
	we obtain
	$$ \textstyle H(X;\tau_1,\tau_2) \geq \frac{1}{[\overline{F}_X(\tau_1) - \overline{F}_X(\tau_2)]^2} \int_{\tau_1}^{\tau_2} [\overline{F}_X(u) - \overline{F}_X(\tau_2)] m_{X,1}(u,\tau_2) f_X(u) du, $$
	which completes the proof.
\end{proof}

\begin{corollary}
	Let $X$ be a non-negative absolutely continuous random variable. If $h_{X,1}(x,y)$ is decreasing in $x$ for any fixed value of $y$, then, for all $(\tau_1,\tau_2)\in D_X$
	$$ \textstyle H(X;\tau_1,\tau_2) \geq \int_{\tau_1}^{\tau_2} \left( \frac{\overline{F}_X(x) - \overline{F}_X(\tau_2)}{\overline{F}_X(\tau_1) - \overline{F}_X(\tau_2)} \right)^2 dx. $$
\end{corollary}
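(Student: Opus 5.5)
Our starting point is the lower bound of Proposition 4:
$$ \textstyle H(X;\tau_1,\tau_2) \geq \frac{1}{[\overline{F}_X(\tau_1)-\overline{F}_X(\tau_2)]^2}\int_{\tau_1}^{\tau_2} [\overline{F}_X(u)-\overline{F}_X(\tau_2)]\, m_{X,1}(u,\tau_2)\, f_X(u)\,du . $$
Under the hypothesis that $h_{X,1}(x,y)$ is decreasing in $x$, Lemma 2(ii) tells us that $m_{X,1}(u,\tau_2)$ is increasing in $u$. Since $u\ge\tau_1$ in the integral, this gives $m_{X,1}(u,\tau_2)\ge m_{X,1}(\tau_1,\tau_2)$ throughout the range of integration. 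The plan is to replace $m_{X,1}(u,\tau_2)$ by this constant and pull it out of the integral.

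The remaining integral is elementary. Substituting $v=\overline{F}_X(u)-\overline{F}_X(\tau_2)$, so that $dv=-f_X(u)\,du$, gives
$$\textstyle \int_{\tau_1}^{\tau_2}[\overline{F}_X(u)-\overline{F}_X(\tau_2)]f_X(u)\,du=\tfrac12[\overline{F}_X(\tau_1)-\overline{F}_X(\tau_2)]^2.$$
The bound therefore becomes
$$H(X;\tau_1,\tau_2)\ge \tfrac12 m_{X,1}(\tau_1,\tau_2),$$
and by (8) this equals $\tfrac12\int_{\tau_1}^{\tau_2}\overline{F}_{X_{\tau_1,\tau_2}}(x)\,dx$.

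This is not yet the claimed bound, which has the square $\int\overline{F}_{X_{\tau_1,\tau_2}}^2\,dx$. So instead of replacing $m_{X,1}(u,\tau_2)$ by a constant, I would keep it inside and apply Fubini's theorem in the other direction. Write
$$[\overline{F}_X(u)-\overline{F}_X(\tau_2)]\,m_{X,1}(u,\tau_2)=\int_u^{\tau_2}[\overline{F}_X(x)-\overline{F}_X(\tau_2)]\,dx.$$
Swapping the order of integration, the lower bound equals
$$\textstyle \frac{1}{[\overline{F}_X(\tau_1)-\overline{F}_X(\tau_2)]^2}\int_{\tau_1}^{\tau_2}[\overline{F}_X(x)-\overline{F}_X(\tau_2)]\,[\overline{F}_X(\tau_1)-\overline{F}_X(x)]\,dx=\int_{\tau_1}^{\tau_2}\overline{F}_{X_{\tau_1,\tau_2}}F_{X_{\tau_1,\tau_2}}\,dx.$$
This is the same quantity as Proposition 3, and it does not use the monotonicity hypothesis. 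It falls short of $\int\overline{F}^2_{X_{\tau_1,\tau_2}}$ whenever $F_{X_{\tau_1,\tau_2}}<\overline{F}_{X_{\tau_1,\tau_2}}$, so the hypothesis must enter elsewhere.

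The route I expect to work is Theorem 1: $H=\mathbb{E}[m_{X,1}(X,\tau_2)\mid\tau_1\le X\le\tau_2]$. Using Fubini as in its proof, this equals
$$\textstyle \int_{\tau_1}^{\tau_2}\overline{F}_{X_{\tau_1,\tau_2}}(x)\,\Lambda_X(x;\tau_1,\tau_2)\,dx, \qquad \Lambda_X=\int_{\tau_1}^x h_{X,1}(u,\tau_2)\,du.$$
The goal is then $\Lambda_X(x;\tau_1,\tau_2)\ge \overline{F}_{X_{\tau_1,\tau_2}}(x)$ in a weighted or integrated sense.

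Pointwise this fails near $x=\tau_1$, where $\Lambda_X=0$ while $\overline{F}_{X_{\tau_1,\tau_2}}=1$. So the main obstacle is to compare integrals rather than integrands. I would first try to show
$$\textstyle \int \overline{F}_{X_{\tau_1,\tau_2}}\,\Lambda_X\,dx\ \ge\ \int\overline{F}_{X_{\tau_1,\tau_2}}^2\,dx$$
by integrating by parts in $x$ against $d(\cdot)$ of $\int_x^{\tau_2}\overline{F}_{X_{\tau_1,\tau_2}}$. This turns the comparison into one between $h_{X,1}(x,\tau_2)\,m_{X,1}(x,\tau_2)$ and quantities controlled by (9). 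Identity (9) gives $h_{X,1}\,m_{X,1}=1+\partial_{\tau_1}m_{X,1}\ge1$ when $m_{X,1}$ is increasing in its first argument, which is exactly what the DGFR assumption supplies via Lemma 2(ii). I expect this inequality $h_{X,1}(x,\tau_2)\,m_{X,1}(x,\tau_2)\ge 1$ to be the key ingredient that produces the squared survival function after one more application of Fubini's theorem.
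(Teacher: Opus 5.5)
\textbf{There is a gap: the proof is never completed.} Your final route is only announced (``I would first try'', ``I expect''), and the step that should produce $\int\overline{F}_{X_{\tau_1,\tau_2}}^2\,dx$ is never carried out.

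Moreover, you abandoned the Proposition~4 route for the wrong reason. You compared $\overline{F}_{X_{\tau_1,\tau_2}}F_{X_{\tau_1,\tau_2}}$ with $\overline{F}_{X_{\tau_1,\tau_2}}^2$ pointwise, but only the integrals matter, and that integral comparison is exactly where the hypothesis enters. The missing move is to put the inequality you name only at the end, $m_{X,1}(u,\tau_2)\ge 1/h_{X,1}(u,\tau_2)$ (from (9) and Lemma~2(ii)), directly into the integrand of Proposition~4. Use it there instead of the weaker $m_{X,1}(u,\tau_2)\ge m_{X,1}(\tau_1,\tau_2)$. By (6), $f_X(u)/h_{X,1}(u,\tau_2)=\overline{F}_X(u)-\overline{F}_X(\tau_2)$, so the integrand becomes $[\overline{F}_X(u)-\overline{F}_X(\tau_2)]^2$ and the claim follows in one line. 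This is precisely the paper's proof.

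Combined with your Fubini identity, this shows that under the hypothesis $\int\overline{F}_{X_{\tau_1,\tau_2}}F_{X_{\tau_1,\tau_2}}\,dx\ge\int\overline{F}_{X_{\tau_1,\tau_2}}^2\,dx$. Equivalently, $\tfrac12 m_{X,1}(\tau_1,\tau_2)\ge\int\overline{F}_{X_{\tau_1,\tau_2}}^2\,dx$. So your first two attempts were not dead ends; each was one step from the goal.

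\textbf{Your third route also closes at once, but not by ``producing the square''.} The Fubini step you describe gives
\[
H(X;\tau_1,\tau_2)=\int_{\tau_1}^{\tau_2}h_{X,1}(u,\tau_2)\,m_{X,1}(u,\tau_2)\,\overline{F}_{X_{\tau_1,\tau_2}}(u)\,du,
\]
because $\int_u^{\tau_2}\overline{F}_{X_{\tau_1,\tau_2}}(x)\,dx=\overline{F}_{X_{\tau_1,\tau_2}}(u)\,m_{X,1}(u,\tau_2)$ by (8). Then $h_{X,1}m_{X,1}\ge1$ gives $H(X;\tau_1,\tau_2)\ge\int_{\tau_1}^{\tau_2}\overline{F}_{X_{\tau_1,\tau_2}}\,dx=m_{X,1}(\tau_1,\tau_2)$. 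Finally, $0\le\overline{F}_{X_{\tau_1,\tau_2}}\le1$ finishes. Written out, this would be a valid alternative to the paper's argument. It in fact yields the stronger lower bound of Proposition~10, of which the statement is a weakening.

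As an aside, for finite $\tau_2$ the monotonicity of $m_{X,1}(\cdot,\tau_2)$ that the hypothesis is used to supply cannot hold at all. We have $m_{X,1}(\tau_1,\tau_2)>0$, while $m_{X,1}(x,\tau_2)\to0$ as $x$ increases to the right end of $\{x:(x,\tau_2)\in D_X\}$. So the statement has real content only as $\tau_2\to\infty$, i.e.\ Corollary~5. This does not affect the logic of either argument.
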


\begin{proof}
	If $h_{X,1}(x,y)$ is decreasing in $x$, then $m_{X,1}(x,y)$ is increasing in $x$ (see Lemma 2 (ii)), and hence from (9) it follows that
	$$ m_{X,1}(x,y) \geq \frac{1}{h_{X,1}(x,y)}. $$
	Then, from Proposition 4 we have
	$$ \textstyle H(X;\tau_1,\tau_2) \geq \frac{1}{[\overline{F}_X(\tau_1) - \overline{F}_X(\tau_2)]^2} \int_{\tau_1}^{\tau_2} [\overline{F}_X(u) - \overline{F}_X(\tau_2)] \frac{1}{h_{X,1}(u,\tau_2)} f_X(u) du, $$
	and using (6) the result follows.
\end{proof}

Since $h_{X,1}(x,\infty) = h_X(x)$, from Corollary 4 we immediately obtain the following corollary.

\begin{corollary}
	Let $X$ be a non-negative absolutely continuous random variable. If $X$ is DFR, then
	$$ \textstyle \mathcal{E}(X;t) \geq \int_t^\infty \left( \frac{\overline{F}_X(x)}{\overline{F}_X(t)} \right)^2 dx, $$
	and
	$$ \textstyle \mathcal{E}(X) \geq \int_0^\infty \left( \overline{F}_X(x) \right)^2 dx. $$
\end{corollary}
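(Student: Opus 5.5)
The plan is to obtain Corollary 5 as the special case $\tau_2 \to \infty$ of Corollary 4, and then to set $t=0$ for the second inequality.

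\emph{Step 1: the hypothesis transfers.} Since $h_{X,1}(x,\infty)=h_X(x)$, the DFR assumption says that $h_{X,1}(x,\infty)$ is decreasing in $x$. Corollary 4 only uses monotonicity of $h_{X,1}(\cdot,\tau_2)$ at the one value of $\tau_2$ under consideration. Concretely, the proof of Corollary 4 needs two facts at this $\tau_2$: that $m_{X,1}(\cdot,\tau_2)$ is increasing (from Lemma 2 (ii)), so that relation (9) gives $m_{X,1}\geq 1/h_{X,1}$; and Proposition 4. Both remain valid with $\tau_2=\infty$.

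\emph{Step 2: make the argument direct to avoid limits.} At $\tau_2=\infty$, $m_{X,1}(x,\infty)=m_X(x)$, and DFR implies IMRL. Relation (9) becomes $h_X=(1+m_X')/m_X$, so $m_X'\geq 0$ gives $m_X(x)\geq 1/h_X(x)$. Proposition 4 with $\tau_2=\infty$, where $\overline{F}_X(\tau_2)=0$, reads
\[
\mathcal{E}(X;t)\geq \frac{1}{[\overline{F}_X(t)]^2}\int_t^\infty \overline{F}_X(u)\, m_X(u) f_X(u)\,du .
\]
Its proof is Fubini together with $\overline{F}_X(u)\leq\overline{F}_X(t)$, both valid on $[t,\infty)$. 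Substituting $m_X(u)\geq \overline{F}_X(u)/f_X(u)$ makes the integrand $[\overline{F}_X(u)]^2$, which yields the first inequality.

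\emph{Step 3: the unconditional bound.} Taking $t=0$ and using $\overline{F}_X(0)=1$ together with $\mathcal{E}(X;0)=\mathcal{E}(X)$ gives the second inequality.

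\emph{Main obstacle.} The only delicate point is justifying the passage $\tau_2\to\infty$, or equivalently the validity of the $\tau_2=\infty$ versions of (8), (9) and Proposition 4. This requires finiteness of $m_X$, i.e.\ $E(X)<\infty$; otherwise both sides may be infinite and the inequality holds trivially. Working directly at $\tau_2=\infty$ as in Step 2 sidesteps any monotone convergence argument.
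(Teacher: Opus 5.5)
Your proposal is correct and follows the paper's route. The paper obtains this corollary in one line by letting $\tau_2\to\infty$ in Corollary 4 (using $h_{X,1}(x,\infty)=h_X(x)$) and then setting $t=0$; your Step 2 just unrolls Corollary 4's proof at $\tau_2=\infty$, via Proposition 4 together with $m_X h_X=1+m_X'\geq 1$ from IMRL. Your remark that only monotonicity at the single value $\tau_2=\infty$ is used is what makes that one-line deduction legitimate, because Corollary 4 as stated assumes $h_{X,1}(\cdot,y)$ is decreasing for every fixed $y$, which DFR does not give (for the exponential, $h_{X,1}(x,y)=\lambda/(1-e^{-\lambda(y-x)})$ increases in $x$).
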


\begin{remark}  
	(i) Let $L_C = \int_0^\infty (\overline{F}_X(x))^2 dx$ be the lower bound for $\mathcal{E}(X)$ given in Corollary 5, and $L_R = \int_0^\infty \overline{F}_X(x) F_X(x) dx$ be the general lower bound for $\mathcal{E}(X)$ given by \cite{Rao2005}. Suppose that $X$ is absolutely continuous and DFR. Let $x_m$ be the median of $X$, i.e., $x_m$ is the solution of $F_X(x_m) = \overline{F}_X(x_m) = \frac{1}{2}$. It holds that $L_C \geq (\leq) L_R$ if and only if $F_X(x) \leq (\geq) \frac{1}{2}$, i.e., $F_X(x) \leq (\geq) F_X(x_m)$, and since $F_X(x)$ is increasing in $x$, it follows that $L_C \geq (\leq) L_R$ if and only if $x \leq (\geq) x_m$. Therefore, if $X$ is non-negative absolutely continuous and DFR, then the bound $L_C$ is better than $L_R$ for $x < x_m$, and $L_R$ is better than $L_C$ for $x > x_m$.
	
	\noindent (ii) Recently, Jahanshahi et al. \cite{Jahanshahi2020} proposed an alternative measure of uncertainty for non-negative random variables called cumulative residual extropy (CRJ), which is denoted by
	$$ \textstyle \text{CRJ}(X) = -\frac{1}{2} \int_0^\infty (\overline{F}_X(x))^2 dx. $$
	Therefore, from Corollary 5 it follows that if the non-negative absolutely continuous random variable $X$ is DFR, then $\mathcal{E}(X)$ and $\text{CRJ}(X)$ satisfy the relationship
	$$ \mathcal{E}(X) \geq -2 \text{CRJ}(X). $$
\end{remark}

In Proposition 5, as in \cite{Rao2004}, we obtain a lower bound for $H(X;\tau_1,\tau_2)$ in terms of the entropy $S(X;\tau_1,\tau_2)$, as well as an upper bound using the Log-Sum inequality.

\begin{proposition}
	Let $X$ be a non-negative absolutely continuous random variable. Then, for all $(\tau_1,\tau_2)\in D_X$
	$$ \textstyle H(X;\tau_1,\tau_2 )\geq C\exp\{S(X;\tau_1,\tau_2 )\}, $$
	where $C=\exp\left\{\int_0^1 \ln(u|\ln u|)\,du\right\}\approx 0.2065$, $S(X;\tau_1,\tau_2 )$ is defined by (4), and
	$$ H(X;\tau_1,\tau_2 )\leq m_{X,1}(\tau_1,\tau_2 )\ln\left(\frac{\tau_2-\tau_1}{m_{X,1}(\tau_1,\tau_2 )}\right). $$
\end{proposition}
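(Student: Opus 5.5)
The upper bound is exactly Proposition 1, which is already established by the log-sum inequality, so it suffices to quote it. The real content is the lower bound, and I would follow the argument of Rao et al. \cite{Rao2004}, transplanted to the doubly truncated variable $X_{\tau_1,\tau_2}$ with density $f_{X_{\tau_1,\tau_2}}$ and survival function $\overline{F}_{X_{\tau_1,\tau_2}}$ on $[\tau_1,\tau_2]$. The tool is the log-sum inequality, equivalently Jensen's inequality for the concave logarithm, applied to the probability density $f_{X_{\tau_1,\tau_2}}$.

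\textbf{Step 1 (Jensen).} Write
\[
\ln H(X;\tau_1,\tau_2)=\ln\int_{\tau_1}^{\tau_2} f_{X_{\tau_1,\tau_2}}(x)\,\frac{-\overline{F}_{X_{\tau_1,\tau_2}}(x)\ln \overline{F}_{X_{\tau_1,\tau_2}}(x)}{f_{X_{\tau_1,\tau_2}}(x)}\,dx .
\]
By Jensen's inequality this is at least
\[
-\int_{\tau_1}^{\tau_2} f_{X_{\tau_1,\tau_2}}\ln f_{X_{\tau_1,\tau_2}}\,dx+\int_{\tau_1}^{\tau_2} f_{X_{\tau_1,\tau_2}}(x)\ln\bigl(\overline{F}_{X_{\tau_1,\tau_2}}(x)\,|\ln \overline{F}_{X_{\tau_1,\tau_2}}(x)|\bigr)\,dx .
\]
The integrand inside the logarithm is non-negative, so the bound is meaningful.

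\textbf{Step 2 (identify the first term).} By (4), the first integral is exactly $S(X;\tau_1,\tau_2)$.

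\textbf{Step 3 (compute the second term).} Substitute $u=\overline{F}_{X_{\tau_1,\tau_2}}(x)$, so that $du=-f_{X_{\tau_1,\tau_2}}(x)\,dx$. The survival function runs from $1$ at $\tau_1$ to $0$ at $\tau_2$, so the second integral becomes $\int_0^1\ln(u|\ln u|)\,du=\ln C$. This constant does not depend on $F_X$, $\tau_1$ or $\tau_2$. Exponentiating gives $H(X;\tau_1,\tau_2)\ge C\exp\{S(X;\tau_1,\tau_2)\}$.

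\textbf{Step 4 (the constant).} The numerical value follows from $\int_0^1\ln u\,du=-1$ together with $\int_0^1\ln|\ln u|\,du=-\gamma$ (Euler's constant). Hence $C=e^{-1-\gamma}\approx 0.2065$.

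The main delicacy is justifying Jensen's inequality where $f_{X_{\tau_1,\tau_2}}$ vanishes, and handling possible infinite values of $S$ or $H$. To deal with the vanishing density, I would restrict the integrals to the set where $f_X>0$; the survival function is constant off this set, so the change of variables in Step 3 still covers $(0,1)$. For the infinite cases, I would assume the relevant integrals are finite, as in Rao et al.
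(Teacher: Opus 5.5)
Your proof is correct and follows the paper's route. Your Jensen step is exactly the log-sum inequality the paper invokes, with $f_{X_{\tau_1,\tau_2}}$ weighed against $\overline{F}_{X_{\tau_1,\tau_2}}|\ln\overline{F}_{X_{\tau_1,\tau_2}}|$, followed by the same substitution $u=\overline{F}_{X_{\tau_1,\tau_2}}(x)$; and since the paper's upper bound just repeats the log-sum argument of Proposition 1, quoting it is legitimate (your closed form $C=e^{-1-\gamma}$ is a correct extra the paper only gives numerically).
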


\begin{proof} 
	Using again the Log-Sum inequality, then as in Rao et al. \cite{Rao2004}, we have
	\begin{equation*}
		\textstyle \int_{\tau_1}^{\tau_2} \frac{f_X(x)}{\overline{F}_X(\tau_1)-\overline{F}_X(\tau_2)} \ln\left(\frac{\frac{f_X(x)}{\overline{F}_X(\tau_1)-\overline{F}_X(\tau_2)}}{\frac{\overline{F}_X(x)-\overline{F}_X(\tau_2)}{\overline{F}_X(\tau_1)-\overline{F}_X(\tau_2)}} \left| \frac{\overline{F}_X(x)-\overline{F}_X(\tau_2)}{\overline{F}_X(\tau_1)-\overline{F}_X(\tau_2)} \right| \right)dx \geq \ln\left(\frac{1}{H(X;\tau_1,\tau_2 )}\right),
	\end{equation*}
	implying that
	\begin{align*}
		\textstyle -S(X;\tau_1,\tau_2 ) - \int_{\tau_1}^{\tau_2} \frac{f_X(x)}{\overline{F}_X(\tau_1)-\overline{F}_X(\tau_2)} \ln\left( \frac{\overline{F}_X(x)-\overline{F}_X(\tau_2)}{\overline{F}_X(\tau_1)-\overline{F}_X(\tau_2)} \left| \frac{\overline{F}_X(x)-\overline{F}_X(\tau_2)}{\overline{F}_X(\tau_1)-\overline{F}_X(\tau_2)} \right| \right)dx \geq \ln\left(\frac{1}{H(X;\tau_1,\tau_2 )}\right),
	\end{align*}
	which yields
	\begin{align*}
		\textstyle \ln H(X;\tau_1,\tau_2 ) \geq \int_{\tau_1}^{\tau_2} \frac{f_X(x)}{\overline{F}_X(\tau_1)-\overline{F}_X(\tau_2)} \ln\left( \frac{\overline{F}_X(x)-\overline{F}_X(\tau_2)}{\overline{F}_X(\tau_1)-\overline{F}_X(\tau_2)} \left| \frac{\overline{F}_X(x)-\overline{F}_X(\tau_2)}{\overline{F}_X(\tau_1)-\overline{F}_X(\tau_2)} \right| \right)dx + S(X;\tau_1,\tau_2 ).
	\end{align*}
	By setting
	$$ \textstyle u = \frac{\overline{F}_X(x)-\overline{F}_X(\tau_2)}{\overline{F}_X(\tau_1)-\overline{F}_X(\tau_2)}, $$
	then the integral becomes
	$$ \textstyle \int_0^1 \ln(u|\ln u|)\,du, $$
	and hence the lower bound follows. Using again the Log-Sum inequality, we obtain
	\begin{align*}
		& \textstyle \int_{\tau_1}^{\tau_2} [\overline{F}_X(x)-\overline{F}_X(\tau_2)] \ln\left( \frac{\overline{F}_X(x)-\overline{F}_X(\tau_2)}{\overline{F}_X(\tau_1)-\overline{F}_X(\tau_2)} \right)dx \\
		& \textstyle \qquad \geq \left(\int_{\tau_1}^{\tau_2} [\overline{F}_X(x)-\overline{F}_X(\tau_2)]dx\right) \ln\left( \frac{\int_{\tau_1}^{\tau_2} [\overline{F}_X(x)-\overline{F}_X(\tau_2)]dx}{(\tau_2-\tau_1)[\overline{F}_X(\tau_1)-\overline{F}_X(\tau_2)]} \right) \\
		& \textstyle \qquad = m_{X,1}(\tau_1,\tau_2)[\overline{F}_X(\tau_1)-\overline{F}_X(\tau_2)] \ln\left( \frac{m_{X,1}(\tau_1,\tau_2)}{\tau_2 - \tau_1} \right),
	\end{align*}
	which yields immediately the upper bound.
\end{proof}

Hence, one can use the lower bounds for $S(X;\tau_1,\tau_2)$ (see, e.g., Moharana and Kayal\cite{Moharana2020}) to obtain the corresponding bounds for $H(X;\tau_1,\tau_2)$.
In Proposition 6, we obtain a relationship between the CRIE of two non-negative and absolutely continuous random variables ordered in the usual likelihood ratio order.
 
\begin{proposition}
	If $X$ and $Y$ are non-negative absolutely continuous random variables with finite doubly truncated mean residual lifetimes $m_{X,1}(\tau_1,\tau_2) $ and $ m_{Y,1}(\tau_1,\tau_2)$, respectively, such that $X \leq_{\text{lr}} Y$, then, for all $(\tau_1,\tau_2) \in D_X \cap D_Y$
	$$ H(X;\tau_1,\tau_2 ) \leq H(Y;\tau_1,\tau_2 ) - m_{X,1}(\tau_1,\tau_2 ) \ln\left( \frac{m_{X,1}(\tau_1,\tau_2 )}{m_{Y,1}(\tau_1,\tau_2 )} \right). $$
\end{proposition}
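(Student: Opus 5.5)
Throughout, write $\overline{G}_X(x)=\overline{F}_{X_{\tau_1,\tau_2}}(x)$ and $\overline{G}_Y(x)=\overline{F}_{Y_{\tau_1,\tau_2}}(x)$ for $\tau_1\le x\le\tau_2$. By (8), $m_{X,1}(\tau_1,\tau_2)=\int_{\tau_1}^{\tau_2}\overline{G}_X\,dx$, and similarly for $Y$. The plan rests on two ingredients. The first is the log-sum (Gibbs) inequality applied to the pair $\overline{G}_X,\overline{G}_Y$. The second is the monotonicity of the ratio $g_{X,Y}(x;\tau_1,\tau_2)=\overline{G}_Y(x)/\overline{G}_X(x)$, which follows from $X\le_{lr}Y$ via Remark 3(i) and Lemma 4.

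First, the log-sum inequality with nonnegative weights $\overline{G}_X$ and $\overline{G}_Y$ gives
\[
\int_{\tau_1}^{\tau_2}\overline{G}_X(x)\ln\frac{\overline{G}_X(x)}{\overline{G}_Y(x)}\,dx\;\ge\; m_{X,1}(\tau_1,\tau_2)\ln\frac{m_{X,1}(\tau_1,\tau_2)}{m_{Y,1}(\tau_1,\tau_2)}.
\]
Expanding the left side and using Definition 1, this becomes
\[
-H(X;\tau_1,\tau_2)-\int_{\tau_1}^{\tau_2}\overline{G}_X\ln\overline{G}_Y\,dx\;\ge\; m_{X,1}\ln\frac{m_{X,1}}{m_{Y,1}}.
\]
Rearranging,
\[
H(X;\tau_1,\tau_2)\le -\int_{\tau_1}^{\tau_2}\overline{G}_X\ln\overline{G}_Y\,dx-m_{X,1}\ln\frac{m_{X,1}}{m_{Y,1}}.
\]
It therefore remains to show that the cross term satisfies
\[
-\int_{\tau_1}^{\tau_2}\overline{G}_X\ln\overline{G}_Y\,dx\le H(Y;\tau_1,\tau_2)=-\int_{\tau_1}^{\tau_2}\overline{G}_Y\ln\overline{G}_Y\,dx.
\]

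Second, I prove this cross-term bound, i.e.\ $\int_{\tau_1}^{\tau_2}(\overline{G}_Y-\overline{G}_X)(-\ln\overline{G}_Y)\,dx\ge 0$. Since $X\le_{lr}Y$, Remark 3(i) together with Lemma 4 shows that $g_{X,Y}$ is increasing in $x$. Because $g_{X,Y}(\tau_1)=1$, this gives $\overline{G}_Y\ge\overline{G}_X$ on $[\tau_1,\tau_2]$, i.e.\ $X_{\tau_1,\tau_2}\le_{st}Y_{\tau_1,\tau_2}$. Also $-\ln\overline{G}_Y\ge 0$, since $\overline{G}_Y\le1$. Hence the integrand is pointwise nonnegative, which yields the inequality. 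Combining the two steps gives the claim.

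The main obstacle I expect is the direction of the order. I must check that the equivalence in Remark 3(i) together with Lemma 4 yields $\overline{G}_Y\ge\overline{G}_X$, and not the reverse, under $X\le_{lr}Y$. The paper states right after Lemma 4 that $X\le_{lr}Y$ if and only if $g_{X,Y}$ is increasing. I would double-check this independently: the likelihood ratio order is preserved under conditioning on the same interval, and $\le_{lr}$ implies $\le_{st}$ for the conditioned variables. That directly gives $\overline{G}_X\le\overline{G}_Y$, which is all the argument needs. A minor point is finiteness of the integrals, which is ensured by the finiteness of the $m$'s and the bounded interval, given the convention $0\ln0=0$.
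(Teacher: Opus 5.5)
Your proposal is correct and follows the paper's proof essentially step for step. Both arguments apply the log-sum inequality to $\overline{F}_{X_{\tau_1,\tau_2}}$ and $\overline{F}_{Y_{\tau_1,\tau_2}}$ to reach the paper's inequality (34). Both then bound the cross term by $H(Y;\tau_1,\tau_2)$, using $\overline{F}_{X_{\tau_1,\tau_2}}\le\overline{F}_{Y_{\tau_1,\tau_2}}$ together with $-\ln\overline{F}_{Y_{\tau_1,\tau_2}}\ge 0$.

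The only difference is cosmetic. You obtain the conditional usual stochastic order from Remark 3(i), Lemma 4 and $g_{X,Y}(\tau_1;\tau_1,\tau_2)=1$. The paper instead cites Theorem 1.C.5 of Shaked and Shanthikumar, which is the closure of $\le_{lr}$ under interval conditioning that you mention as your cross-check.
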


\begin{proof}
	Let $\overline{F}_Y(x)=\Pr(Y>x)$ be the survival function of $Y$. From the Log-Sum inequality we obtain
	\begin{equation}
			\textstyle \int_{\tau_1}^{\tau_2} \frac{\overline{F}_X(x)-\overline{F}_X(\tau_2)}{\overline{F}_X(\tau_1)-\overline{F}_X(\tau_2)} \ln\left( \frac{ \frac{\overline{F}_X(x)-\overline{F}_X(\tau_2)}{\overline{F}_X(\tau_1)-\overline{F}_X(\tau_2)} }{ \frac{\overline{F}_Y(x)-\overline{F}_Y(\tau_2)}{\overline{F}_Y(\tau_1)-\overline{F}_Y(\tau_2)} } \right)dx \geq m_{X,1}(\tau_1,\tau_2 ) \ln\left( \frac{m_{X,1}(\tau_1,\tau_2 )}{m_{Y,1}(\tau_1,\tau_2 )} \right),
	\end{equation}
	and thus, it follows that
	\begin{equation}
			\textstyle H(X;\tau_1,\tau_2 ) \leq -\int_{\tau_1}^{\tau_2} \frac{\overline{F}_X(x)-\overline{F}_X(\tau_2)}{\overline{F}_X(\tau_1)-\overline{F}_X(\tau_2)} \ln\left( \frac{\overline{F}_Y(x)-\overline{F}_Y(\tau_2)}{\overline{F}_Y(\tau_1)-\overline{F}_Y(\tau_2)} \right)dx - m_{X,1}(\tau_1,\tau_2 ) \ln\left( \frac{m_{X,1}(\tau_1,\tau_2 )}{m_{Y,1}(\tau_1,\tau_2 )} \right).
	\end{equation}
	Since $X\leq_{\text{lr}} Y$, it follows from Theorem 1.C.5 of Shaked and Shanthikumar \cite{Shaked2007} that
	$$ \textstyle [X \mid \tau_1 \leq X \leq \tau_2] \leq_{\text{st}} [Y \mid \tau_1 \leq Y \leq \tau_2], $$
	i.e., it holds
	\begin{equation}
		\textstyle \frac{\overline{F}_X(x)-\overline{F}_X(\tau_2)}{\overline{F}_X(\tau_1)-\overline{F}_X(\tau_2)} \leq \frac{\overline{F}_Y(x)-\overline{F}_Y(\tau_2)}{\overline{F}_Y(\tau_1)-\overline{F}_Y(\tau_2)}, \quad \text{for any } x\in[\tau_1,\tau_2].
	\end{equation}
	Since
	$$ -\ln\left( \frac{\overline{F}_Y(x)-\overline{F}_Y(\tau_2)}{\overline{F}_Y(\tau_1)-\overline{F}_Y(\tau_2)} \right) \geq 0, $$
	from (34) and (35) we obtain
	$$ \textstyle H(X;\tau_1,\tau_2 ) \leq -\int_{\tau_1}^{\tau_2} \frac{\overline{F}_Y(x)-\overline{F}_Y(\tau_2)}{\overline{F}_Y(\tau_1)-\overline{F}_Y(\tau_2)} \ln\left( \frac{\overline{F}_Y(x)-\overline{F}_Y(\tau_2)}{\overline{F}_Y(\tau_1)-\overline{F}_Y(\tau_2)} \right)dx - m_{X,1}(\tau_1,\tau_2 ) \ln\left( \frac{m_{X,1}(\tau_1,\tau_2 )}{m_{Y,1}(\tau_1,\tau_2 )} \right), $$
	and hence the result follows.
\end{proof}

\begin{remark}
	(i) Using the inequality $x\ln \frac{x}{y} \geq x - y$, $x>0$, $y>0$, we get the following weaker but simpler upper bound than that given in Proposition 6 under the assumption that $X \leq_{\text{lr}} Y$
	$$ \textstyle H(X;\tau_1,\tau_2 ) \leq H(Y;\tau_1,\tau_2 ) - [m_{X,1}(\tau_1,\tau_2 ) - m_{Y,1}(\tau_1,\tau_2 )]. $$
	
	\noindent (ii) As an information distance between two distribution functions $F_X$ and $F_Y$, Kullback and Leibler \cite{Kullback1951} proposed the following discrimination measure, also known as relative entropy of $X$ and $Y$
	$$ \textstyle I_{X,Y} = \int_0^\infty f_X(x)\ln \left(\frac{f_X(x)}{f_Y(x)}\right) dx, $$
	where $f_X(x)$ and $f_Y(x)$ are the densities of $X$ and $Y$, respectively. In addition, Ebrahimi and Kirmani \cite{Ebrahimi1996b}, and Di Crescenzo and Longobardi \cite{DiCrescenzo2004} defined measures of discrimination between two truncated distributions. The results in Proposition 6 are useful for constructing a goodness-of-fit test (see also \cite{Baratpour2012}). For this, let us define a new measure of distance between two distributions that is like Kullback–Leibler divergence (KL), but using the survival function for doubly truncated random variables rather than the density function and call it cumulative residual interval Kullback–Leibler (CRIKL) divergence, that is
	\begin{align*}
		\textstyle \text{CRIKL}(F_X,F_Y;\tau_1,\tau_2 ) = \textstyle \int_{\tau_1}^{\tau_2} \frac{\overline{F}_X(x)-\overline{F}_X(\tau_2)}{\overline{F}_X(\tau_1)-\overline{F}_X(\tau_2)} \ln\left( \frac{ \frac{\overline{F}_X(x)-\overline{F}_X(\tau_2)}{\overline{F}_X(\tau_1)-\overline{F}_X(\tau_2)} }{ \frac{\overline{F}_Y(x)-\overline{F}_Y(\tau_2)}{\overline{F}_Y(\tau_1)-\overline{F}_Y(\tau_2)} } \right) dx - [m_{X,1}(\tau_1,\tau_2 ) - m_{Y,1}(\tau_1,\tau_2 )].
	\end{align*} 
	From (33) we have
	\begin{align*}
		\textstyle \int_{\tau_1}^{\tau_2} \frac{\overline{F}_X(x)-\overline{F}_X(\tau_2)}{\overline{F}_X(\tau_1)-\overline{F}_X(\tau_2)} \ln\left( \frac{ \frac{\overline{F}_X(x)-\overline{F}_X(\tau_2)}{\overline{F}_X(\tau_1)-\overline{F}_X(\tau_2)} }{ \frac{\overline{F}_Y(x)-\overline{F}_Y(\tau_2)}{\overline{F}_Y(\tau_1)-\overline{F}_Y(\tau_2)} } \right) & \geq m_{X,1}(\tau_1,\tau_2 )\ln\left( \frac{m_{X,1}(\tau_1,\tau_2 )}{m_{Y,1}(\tau_1,\tau_2 )} \right) \\
		& \textstyle \geq m_{X,1}(\tau_1,\tau_2 ) - m_{Y,1}(\tau_1,\tau_2 ),
	\end{align*} 
	and thus, it holds $\text{CRIKL}(X,Y;\tau_1,\tau_2 ) \geq 0$ and note that in the Log-Sum inequality used in Proposition 5, equality holds if and only if
	$$ \textstyle \frac{\overline{F}_X(x)-\overline{F}_X(\tau_2)}{\overline{F}_X(\tau_1)-\overline{F}_X(\tau_2)} = \frac{\overline{F}_Y(x)-\overline{F}_Y(\tau_2)}{\overline{F}_Y(\tau_1)-\overline{F}_Y(\tau_2)}. $$
	From the last relationship, we get $m_{X,1}(\tau_1,\tau_2 ) = m_{Y,1}(\tau_1,\tau_2 )$, implying that in the last inequality, the equality holds if and only if $\overline{F}_X(x) = \overline{F}_Y(x)$. Therefore, if our aim is to test the hypothesis $$ H_0 : F_X(x)=G(x), \quad \text{vs.} \quad H_1 : F_X(x)\neq G(x), $$
	{\sloppy where $G(x)$ is an estimated distribution function, then under the null hypothesis it holds $\text{CRIKL}(F_X, G; \tau_1, \tau_2) = 0$ and a large value of $\text{CRIKL}(F_X,G;\tau_1,\tau_2 )$ leads us to reject the null hypothesis $H_0$ in favor of the alternative hypothesis $H_1$.\par}
\end{remark}
 
\begin{corollary}
	If $X$ and $Y$ are two non-negative absolutely continuous random variables with finite mean residual lifetimes $m_X(t)$ and $m_Y(t)$, respectively, such that $X \leq_{\text{hr}} Y$, then
	$$ \mathcal{E}(X;t) \leq \mathcal{E}(Y;t) - m_X(t)\ln\left(\frac{m_X(t)}{m_Y(t)}\right). $$
\end{corollary}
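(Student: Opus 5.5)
The plan is to specialize Proposition 6 to $\tau_2\to\infty$ with $\tau_1=t$, after checking that the likelihood ratio hypothesis there can be replaced by the weaker hazard rate order. First, Proposition 6 cannot be quoted verbatim: $X\leq_{hr}Y$ does not imply $X\leq_{lr}Y$. So I would return to its proof and see which assumption is actually needed once $\tau_2=\infty$.

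In the proof of Proposition 6, the order assumption enters at exactly one point, inequality (35). That inequality requires the truncated survival functions to be ordered, i.e. $[X\mid \tau_1\leq X\leq\tau_2]\leq_{st}[Y\mid\tau_1\leq Y\leq\tau_2]$. Everything else is unconditional: the Log-Sum inequality (33), the resulting bound (34), and the nonnegativity of $-\ln$ of a survival function.

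With $\tau_2=\infty$ and $\tau_1=t$, the functions involved become $\overline{F}_X(x)/\overline{F}_X(t)$ and $\overline{F}_Y(x)/\overline{F}_Y(t)$ for $x\geq t$. The required inequality (35) then reads
$$\frac{\overline{F}_X(x)}{\overline{F}_X(t)}\leq \frac{\overline{F}_Y(x)}{\overline{F}_Y(t)},\qquad x\geq t.$$
This is equivalent to $\overline{F}_Y(x)/\overline{F}_X(x)\geq \overline{F}_Y(t)/\overline{F}_X(t)$, which holds because $X\leq_{hr}Y$ means this ratio is increasing (Definition 3(ii)). Equivalently, $[X\mid X>t]\leq_{st}[Y\mid Y>t]$, a standard characterization of the hazard rate order (Shaked and Shanthikumar \cite{Shaked2007}).

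Next I would repeat the chain (33)–(35) over $[t,\infty)$. The Log-Sum inequality gives
$$\int_t^\infty \frac{\overline{F}_X(x)}{\overline{F}_X(t)}\ln\!\left(\frac{\overline{F}_X(x)/\overline{F}_X(t)}{\overline{F}_Y(x)/\overline{F}_Y(t)}\right)dx\geq m_X(t)\ln\frac{m_X(t)}{m_Y(t)},$$
since $\int_t^\infty \overline{F}_X(x)/\overline{F}_X(t)\,dx=m_X(t)$, and likewise for $Y$. Rearranging as in (34) bounds $\mathcal{E}(X;t)$ by $-\int_t^\infty \frac{\overline{F}_X}{\overline{F}_X(t)}\ln\frac{\overline{F}_Y}{\overline{F}_Y(t)}\,dx - m_X(t)\ln\frac{m_X(t)}{m_Y(t)}$. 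The factor $-\ln(\overline{F}_Y(x)/\overline{F}_Y(t))$ is nonnegative. Replacing the $X$-weight by the larger $Y$-weight therefore increases this term to $\mathcal{E}(Y;t)$, using definition (3) (equivalently $H(\cdot;t,\infty)=\mathcal{E}(\cdot;t)$). This yields the stated inequality.

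The main obstacle is justifying the Log-Sum inequality on the infinite interval $[t,\infty)$. Its integral form with weights $a(x),b(x)$ requires $\int a$ and $\int b$ to be finite, which is exactly the assumption that $m_X(t),m_Y(t)<\infty$. Given that, it follows from Jensen's inequality applied to the convex function $u\mapsto u\ln u$ under the probability measure $b(x)\,dx/\int b$. If $\mathcal{E}(Y;t)=\infty$ the claim is trivial, so one may assume it is finite; all integrals then converge and the rearrangements are legitimate.
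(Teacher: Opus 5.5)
Your proposal is correct and follows the paper's proof: the hazard rate order gives $[X\mid X>t]\leq_{st}[Y\mid Y>t]$. The log-sum bound (34) with $\tau_1=t$, $\tau_2=\infty$, together with $-\ln\bigl(\overline{F}_Y(x)/\overline{F}_Y(t)\bigr)\geq 0$, then lets you replace the $X$-weight by the larger $Y$-weight to reach $\mathcal{E}(Y;t)$. The only difference is that you run the log-sum step directly on $[t,\infty)$, with explicit integrability checks via Jensen and the finiteness of $m_X(t),m_Y(t)$, rather than letting $\tau_2\to\infty$ in (34); if anything, this is slightly more careful.
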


\begin{proof}
	If $X \leq_{\text{hr}} Y$, then from the relationship (1.B.7) of Shaked and Shanthikumar\cite{Shaked2007} it holds that $[X \mid X>t] \leq_{\text{st}} [Y \mid Y>t]$, which implies that
	$$ \textstyle \frac{\overline{F}_X(x)}{\overline{F}_X(t)} \leq \frac{\overline{F}_Y(x)}{\overline{F}_Y(t)}, \quad \text{for any } x > t, $$
	and hence using (34) with $\tau_2 \to \infty$ we get that
	\begin{align}
		\mathcal{E}(X;t) & \textstyle \leq -\int_t^\infty \frac{\overline{F}_X(x)}{\overline{F}_X(t)} \ln\left( \frac{\overline{F}_Y(x)}{\overline{F}_Y(t)} \right)dx - m_X(t)\ln\left( \frac{m_X(t)}{m_Y(t)} \right) \\
		& \textstyle \leq -\int_t^\infty \frac{\overline{F}_Y(x)}{\overline{F}_Y(t)} \ln\left( \frac{\overline{F}_Y(x)}{\overline{F}_Y(t)} \right)dx - m_X(t)\ln\left( \frac{m_X(t)}{m_Y(t)} \right) \nonumber \\
		& \textstyle = \mathcal{E}(Y;t) - m_X(t)\ln\left( \frac{m_X(t)}{m_Y(t)} \right). \nonumber
	\end{align} 
\end{proof}

When $t=0$, Corollary 6 is reduced to Proposition 2.1 of Navarro et al. \cite{Navarro2010}.

\begin{remark}
	Considering that $Y \sim \text{Exp}(\lambda), \lambda > 0$, with $\overline{F}_Y (x) = e^{-\lambda x}, x \geq 0$, and $m_Y (t) = 1/\lambda$, from (36) we obtain that
	
	\begin{align}
		\textstyle -\mathcal{E}(X;t) &\geq -\lambda \int_t^\infty (x-t) \frac{\overline{F}_X(x)}{\overline{F}_X(t)} dx + m_X(t) \ln(\lambda m_X(t)) \nonumber \\
		& \textstyle = -\lambda \frac{1}{\overline{F}_X(t)} \int_t^\infty x \overline{F}_X(x) dx + \lambda t m_X(t) + m_X(t) \ln(m_X(t)) + m_X(t) \ln \lambda \nonumber \\
		&\textstyle = -\lambda \frac{1}{2} \left[ \mathbb{E}(X^2 \mid X > t) - t^2 \right] + \lambda t m_X(t) + m_X(t) \ln(m_X(t)) + m_X(t) \ln \lambda \nonumber \\
		&\textstyle = -\lambda \frac{1}{2} \mathbb{E}[(X - t)^2 \mid X > t] + m_X(t) \ln(m_X(t)) + m_X(t) \ln \lambda.
	\end{align}
	The value of $\lambda$ that maximizes the right-hand side of (37) is
	$$ \lambda_{\max} = \frac{2 m_X(t)}{\mathbb{E}[(X - t)^2 \mid X > t]}. $$
	Substituting this into the right-hand side of (37) we get
	$$ \textstyle -\mathcal{E}(X;t) \geq -m_X(t) + m_X(t) \ln \left( \frac{2 m_X^2(t)}{\mathbb{E}[(X - t)^2 \mid X > t]} \right), $$
	and using the inequality $\ln x \geq 1 - \frac{1}{x}, x \geq 0$, the above relationship implies that
	$$ \textstyle -\mathcal{E}(X;t) \geq -m_X(t) + m_X(t) \left\{ 1 - \frac{\mathbb{E}[(X - t)^2 \mid X > t]}{2 m_X^2(t)} \right\}, $$
	and thus, we obtain that
	$$ \mathcal{E}(X;t) \leq \frac{\mathbb{E}[(X - t)^2 \mid X > t]}{2 m_X^2(t)}. $$
	Note that this bound has been proved by Asadi and Zohrevand \cite[Theorem 4.10]{Asadi2007} using a different approach.
\end{remark}

From Theorem 4, we immediately obtain the following proposition.

\begin{proposition} 
	Let $X$ be a non-negative absolutely continuous random variable and $Y = \varphi(X)$, where $\varphi$ is an increasing and differentiable function with derivative $\varphi'$. If $\varphi'(x) \geq (\leq) 1$, then, for all $(\tau_1, \tau_2) \in D_X \cap D_Y$
	$$ \textstyle H(Y;\tau_1,\tau_2) \leq (\geq) H(X;\varphi^{-1}(\tau_1), \varphi^{-1}(\tau_2)). $$
\end{proposition}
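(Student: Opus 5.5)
The plan is to read the inequality directly off the representation in Theorem 4, so no new analytic machinery should be needed. Set $a=\varphi^{-1}(\tau_1)$, $b=\varphi^{-1}(\tau_2)$, and write
\[
\psi(x)=-\frac{\overline{F}_X(x)-\overline{F}_X(b)}{\overline{F}_X(a)-\overline{F}_X(b)}\,\ln\!\left(\frac{\overline{F}_X(x)-\overline{F}_X(b)}{\overline{F}_X(a)-\overline{F}_X(b)}\right),\qquad a\le x\le b .
\]
By Definition 1 (relation (5)), $H(X;a,b)=\int_a^b\psi(x)\,dx$. As in the proof of Theorem 4, the substitution $y=\varphi(x)$ gives $H(Y;\tau_1,\tau_2)=\int_a^b\varphi'(x)\psi(x)\,dx$. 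I will use this form rather than the displayed statement of Theorem 4, because the displayed fraction there has identical numerator and denominator, which is evidently a misprint. Here I use that $\varphi$ is increasing, so $[a,b]$ is correctly oriented, and that $(\tau_1,\tau_2)\in D_Y$ forces $(a,b)\in D_X$.

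The key steps are then the following.
\begin{itemize}
\item Observe that the ratio inside the logarithm lies in $[0,1]$ on $[a,b]$, since $\overline{F}_X$ is decreasing. Hence $\psi(x)\ge 0$, with the convention $0\ln 0=0$.
\item Form the difference
\[
H(Y;\tau_1,\tau_2)-H(X;a,b)=\int_a^b(\varphi'(x)-1)\,\psi(x)\,dx .
\]
\item Conclude that the sign of this difference is governed pointwise by the sign of $\varphi'-1$ on $[a,b]$, because $\psi\ge 0$. No finiteness issue arises beyond that of $H(X;a,b)$, since $\varphi'$ is continuous on the compact interval $[a,b]$.
\end{itemize}

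The main obstacle is the last step, namely matching the orientation of the inequality to the one printed in the statement. The pointwise argument just described yields $\varphi'\ge 1\Rightarrow H(Y;\tau_1,\tau_2)\ge H(X;a,b)$. This is the opposite of the printed ``$\le(\ge)$''. The uniform case $\varphi(x)=cx$ with $c>1$ confirms it: Proposition 1 gives $H(Y;\tau_1,\tau_2)=c\,H(X;\tau_1/c,\tau_2/c)$.

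So the honest plan is as follows. I would first re-derive the change of variables in Theorem 4 carefully, tracking the Jacobian $\varphi'$ and the endpoints, to check whether some other convention for $\varphi^{-1}$ or for the arguments of $H$ makes the printed direction correct. If none does, the printed inequality cannot be proved as stated, and the argument above establishes the reverse ordering instead. The linear example $\varphi(x)=cx$ is the first test I would run at that point.
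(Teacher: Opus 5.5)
Your argument is the paper's intended one. The paper only says the result follows ``immediately'' from Theorem 4, that is, from $H(Y;\tau_1,\tau_2)=\int_a^b\varphi'(x)\psi(x)\,dx$ with $\psi\ge 0$. You were also right to correct the misprinted denominator in Theorem 4.

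Your conclusion about the direction is correct. That argument gives $H(Y;\tau_1,\tau_2)\ge(\le)H(X;\varphi^{-1}(\tau_1),\varphi^{-1}(\tau_2))$ when $\varphi'\ge(\le)1$, so the printed inequality is reversed and no other convention rescues it.

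You can drop the hedging, because your linear test already settles it. Take $\varphi(x)=cx$ with $c>1$. Proposition 1 gives $H(Y;\tau_1,\tau_2)=c\,H(X;\tau_1/c,\tau_2/c)>H(X;\tau_1/c,\tau_2/c)$ for every $(\tau_1,\tau_2)\in D_X\cap D_Y$. The strict inequality holds because $H(X;a,b)>0$ whenever $(a,b)\in D_X$. Concretely, for $X\sim U(0,1)$ and $c=2$, Example 2 gives $(\tau_2-\tau_1)/4$ on the left against $(\tau_2-\tau_1)/8$ on the right.
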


As in Rao \cite[Section 5]{Rao2005}, in the next theorem and based on CRIE, we provide an upper bound for the mean of the absolute difference between two independent and identically distributed random variables that are doubly truncated.

\begin{theorem} 
	Let $X$ and $Y$ be two non-negative absolutely continuous, independent and identically distributed random variables. Then
	$$ \textstyle \mathbb{E}[|X - \mathbb{E}(X)| \mid \tau_1 \leq X \leq \tau_2] \leq \mathbb{E}[|X - Y| \mid \tau_1 \leq X \leq \tau_2, \tau_1 \leq Y \leq \tau_2] \leq 2 H(X;\tau_1,\tau_2). $$
\end{theorem}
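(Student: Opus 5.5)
Throughout, write $Z=X_{\tau_1,\tau_2}$ and $W=Y_{\tau_1,\tau_2}$. Because $X$ and $Y$ are i.i.d., the conditioning events $\{\tau_1\le X\le\tau_2\}$ and $\{\tau_1\le Y\le\tau_2\}$ are independent. Hence $Z$ and $W$ are independent with the common survival function $\overline{F}_{X_{\tau_1,\tau_2}}$ supported on $[\tau_1,\tau_2]$. I read $\mathbb{E}(X)$ inside the first expectation as the doubly truncated mean $\mu_X(\tau_1,\tau_2)=\mathbb{E}(Z)=\mathbb{E}(W)$. This is the reading under which the chain is meaningful, and it reduces to Rao's statement when $\tau_1=0$, $\tau_2\to\infty$. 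The plan is to prove the two inequalities separately, following Rao \cite[Section 5]{Rao2005}.

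\emph{First inequality.} Condition on $Z$ and use $\mathbb{E}(W)=\mathbb{E}(W\mid Z)$, which holds by independence. Then $|Z-\mathbb{E}(W)|=|\mathbb{E}(Z-W\mid Z)|\le \mathbb{E}(|Z-W|\mid Z)$ by Jensen's inequality, applied to the convex function $|\cdot|$. Taking expectations gives $\mathbb{E}|Z-\mu_X(\tau_1,\tau_2)|\le \mathbb{E}|Z-W|$, which is exactly the left inequality.

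\emph{Second inequality.} First establish the Gini-type identity
\[
\mathbb{E}|Z-W|=2\int_{\tau_1}^{\tau_2} F_{X_{\tau_1,\tau_2}}(x)\,\overline{F}_{X_{\tau_1,\tau_2}}(x)\,dx .
\]
The route is to write $|Z-W|=\int_{\tau_1}^{\tau_2}\big[\mathbf 1\{Z\le x<W\}+\mathbf 1\{W\le x<Z\}\big]\,dx$, take expectations, and apply Fubini together with independence. Each of the two terms then contributes $\int_{\tau_1}^{\tau_2}F_{X_{\tau_1,\tau_2}}(x)\overline{F}_{X_{\tau_1,\tau_2}}(x)\,dx$. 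Next invoke the lower bound of Proposition 3 in the form given in the Remark following it, part (i): $H(X;\tau_1,\tau_2)\ge\int_{\tau_1}^{\tau_2}\overline{F}_{X_{\tau_1,\tau_2}}(x)F_{X_{\tau_1,\tau_2}}(x)\,dx$. Combining these two facts gives $\mathbb{E}|Z-W|\le 2H(X;\tau_1,\tau_2)$.

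If one wants to check Proposition 3's lower bound directly, it follows pointwise from $-u\ln u\ge u(1-u)$ for $u\in[0,1]$, which is the inequality $\ln(1/u)\ge 1-u$.

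The only delicate step is the Gini identity. There one must confirm that integrating over $[\tau_1,\tau_2]$, rather than $[0,\infty)$, loses nothing; this holds because both variables live on that interval. One must also confirm that Fubini applies, which it does since the integrand is nonnegative. The remaining steps are immediate from earlier results.
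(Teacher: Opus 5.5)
Your proposal is correct and follows essentially the paper's route: Jensen's inequality as in Rao for the left inequality, and for the right inequality the identity $\mathbb{E}|Z-W| = 2\int_{\tau_1}^{\tau_2}\overline{F}_{X_{\tau_1,\tau_2}}(x)[1-\overline{F}_{X_{\tau_1,\tau_2}}(x)]\,dx$ combined with the pointwise bound $u(1-u)\le -u\ln u$. Your reading of $\mathbb{E}(X)$ as the truncated mean $\mu_X(\tau_1,\tau_2)$ is the one the Jensen step needs, and the only other differences are cosmetic: the paper gets the identity from the survival functions of $\max(X,Y)$ and $\min(X,Y)$ rather than your indicator representation, and it applies the pointwise inequality directly instead of citing Proposition 3.
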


\begin{proof}
	Since
	$$ \textstyle \Pr[\max(X,Y) > x \mid \tau_1 \leq X \leq \tau_2, \tau_1 \leq Y \leq \tau_2] = 1 - [1 - \overline{F}_{X_{\tau_1,\tau_2}}(x)]^2, $$
	and
	$$ \textstyle \Pr[\min(X,Y) > x \mid \tau_1 \leq X \leq \tau_2, \tau_1 \leq Y \leq \tau_2] = [\overline{F}_{X_{\tau_1,\tau_2}}(x)]^2, $$
	we get that
	\begin{align*} 
			 \textstyle \Pr[\max(X,Y)>x|\tau_1 \leq X\leq \tau_2, \tau_1 \leq Y\leq \tau_2]&-Pr[\min(X,Y)>x|\tau_1 \leq X\leq \tau_2, \tau_1\leq Y \leq \tau_2] \\
			& \qquad = 2 \overline{F}_{X_{\tau_1,\tau_2}}(x)[1-\overline{F}_{X_{\tau_1,\tau_2}}(x)].
	\end{align*}
	Integrating both sides from $\tau_1$ to $\tau_2$ yields
	$$ \textstyle \mathbb{E}[|X - Y| \mid \tau_1 \leq X \leq \tau_2, \tau_1 \leq Y \leq \tau_2] = 2 \int_{\tau_1}^{\tau_2} \frac{\overline{F}_X(x) - \overline{F}_X(\tau_2)}{\overline{F}_X(\tau_1) - \overline{F}_X(\tau_2)} \left[ 1 - \frac{\overline{F}_X(x) - \overline{F}_X(\tau_2)}{\overline{F}_X(\tau_1) - \overline{F}_X(\tau_2)} \right] dx. $$
	Making use of the inequality $x(1 - x) \leq x |\ln x|, 0 < x < 1$, we obtain
	\begin{align*} 
		& \textstyle \mathbb{E}[|X - Y| \mid \tau_1 \leq X\leq \tau_2, \tau_1\leq Y \leq \tau_2] \\
		& \textstyle \qquad \leq 2 \int_{\tau_1}^{\tau_2} \frac{\overline{F}_X(x) - \overline{F}_X(\tau_2)}{\overline{F}_X(\tau_1) - \overline{F}_X(\tau_2)} \left| \ln \left( \frac{\overline{F}_X(x) - \overline{F}_X(\tau_2)}{\overline{F}_X(\tau_1) - \overline{F}_X(\tau_2)} \right) \right| dx \\
		& \textstyle \qquad = 2 H(X;\tau_1,\tau_2). 
	\end{align*}
	The left-side inequality follows as in Rao \cite{Rao2005} using Jensen's inequality.
\end{proof}

\begin{theorem} 
	Let $X$ and $Y$ be two non-negative absolutely continuous random variables with finite generalized failure rates $h_{X,1}(\tau_1,\tau_2)$ and $h_{Y,1}(\tau_1,\tau_2)$ and doubly truncated mean residual lifetimes $m_{X,1}(\tau_1,\tau_2)$ and $m_{Y,1}(\tau_1,\tau_2)$, respectively.
	
	\noindent (i) If for any $(\tau_1, \tau_2) \in D_X \cap D_Y$ it holds $h_{X,1}(\tau_1,\tau_2) \leq h_{Y,1}(\tau_1,\tau_2)$ and $m_{X,1}(x,\tau_2)$ is increasing in $x \geq 0$, then
	\begin{equation}
		H(Y;\tau_1,\tau_2) \leq H(X;\tau_1,\tau_2).
	\end{equation}
	
	\noindent (ii) If for any $(\tau_1, \tau_2) \in D_X \cap D_Y$ it holds $h_{Y,1}(\tau_1,\tau_2) \leq h_{X,1}(\tau_1,\tau_2)$ and $m_{X,1}(x,\tau_2)$ is decreasing in $x \geq 0$, then
	\begin{equation}
		H(X;\tau_1,\tau_2) \leq H(Y;\tau_1,\tau_2).
	\end{equation}
\end{theorem}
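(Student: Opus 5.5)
The plan is to combine the representation of Theorem 1 with the comparison result of Lemma 3. By Theorem 1,
\[
H(X;\tau_1,\tau_2)=\mathbb{E}[m_{X,1}(X,\tau_2)\mid \tau_1\le X\le \tau_2],
\]
and the same identity holds with $Y$ in place of $X$. The proof therefore reduces to comparing $m_{Y,1}(Y,\tau_2)$ with $m_{X,1}(X,\tau_2)$ in conditional expectation. I would do this in two steps: first replace the function $m_{Y,1}(\cdot,\tau_2)$ by $m_{X,1}(\cdot,\tau_2)$, and then replace the random variable $Y$ by $X$.

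For (i), the first step is a pointwise comparison of the two doubly truncated mean residual lifetimes. Since $h_{X,1}(t,\tau_2)\le h_{Y,1}(t,\tau_2)$ for every $t\in[\tau_1,\tau_2)$ (all such pairs $(t,\tau_2)$ lie in $D_X\cap D_Y$), Lemma 1(ii) gives
\[
m_{X,1}(u,\tau_2)=\int_u^{\tau_2}\exp\Bigl\{-\int_u^x h_{X,1}(t,\tau_2)\,dt\Bigr\}dx\;\ge\; m_{Y,1}(u,\tau_2),
\]
because the exponent for $X$ is smaller. Hence
\[
H(Y;\tau_1,\tau_2)=\mathbb{E}[m_{Y,1}(Y,\tau_2)\mid \tau_1\le Y\le\tau_2]\le \mathbb{E}[m_{X,1}(Y,\tau_2)\mid \tau_1\le Y\le\tau_2].
\]
For the second step I would apply Lemma 3 with $\varphi(x)=m_{X,1}(x,\tau_2)$, which is increasing by hypothesis. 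Under $h_{X,1}\le h_{Y,1}$, Lemma 3 yields $\mathbb{E}[\varphi(Y)\mid\cdot]\le\mathbb{E}[\varphi(X)\mid\cdot]$. The right-hand side equals $H(X;\tau_1,\tau_2)$ by Theorem 1, which gives (38).

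Part (ii) follows the same pattern with all inequalities reversed. From $h_{Y,1}\le h_{X,1}$, Lemma 1(ii) gives $m_{Y,1}(u,\tau_2)\ge m_{X,1}(u,\tau_2)$, so
\[
H(Y;\tau_1,\tau_2)\ge \mathbb{E}[m_{X,1}(Y,\tau_2)\mid \tau_1\le Y\le\tau_2].
\]
The decreasing case of Lemma 3, applied with $\varphi=m_{X,1}(\cdot,\tau_2)$ decreasing, then gives $\mathbb{E}[\varphi(Y)\mid\cdot]\ge \mathbb{E}[\varphi(X)\mid\cdot]=H(X;\tau_1,\tau_2)$. This proves (39).

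The main obstacle is technical rather than conceptual: Lemma 3 was proved via integration by parts and needs $\varphi'$. I would justify this using relation (9), which gives $\partial m_{X,1}/\partial x = h_{X,1}m_{X,1}-1$, so $\varphi$ is differentiable wherever $f_X$ is continuous. Alternatively, one can argue through the $\le_{icx}$/$\le_{st}$ route of Remark 3, which avoids differentiability altogether. A second point to watch is that Lemma 3 is applied with the same $\tau_2$ and lower endpoint $\tau_1$. This is exactly what the hypothesis on all pairs in $D_X\cap D_Y$ provides.
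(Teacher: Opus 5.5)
Your part (i) is correct and is the paper's own argument: Theorem 1, then the pointwise bound $m_{Y,1}(u,\tau_2)\le m_{X,1}(u,\tau_2)$ from Lemma 1, then Lemma 3 with $\varphi=m_{X,1}(\cdot,\tau_2)$. For that last step, the cleanest justification is the stochastic order $\overline{F}_{Y_{\tau_1,\tau_2}}\le\overline{F}_{X_{\tau_1,\tau_2}}$, which follows from Lemma 1(i). The $\le_{icx}$ route of Remark 3 would additionally need $m_{X,1}(\cdot,\tau_2)$ to be convex.

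Part (ii), however, has a genuine gap, exactly at the step you call the decreasing case of Lemma 3. That parenthetical case of Lemma 3 is misstated in the paper. In (14), $h_{X,1}\ge h_{Y,1}$ gives $\exp\{-\int_{\tau_1}^x h_{X,1}(t,\tau_2)\,dt\}\le\exp\{-\int_{\tau_1}^x h_{Y,1}(t,\tau_2)\,dt\}$. Multiplying by $\varphi'\le 0$ reverses this, so $\mathbb{E}[\varphi(Y)\mid\tau_1\le Y\le\tau_2]\le\mathbb{E}[\varphi(X)\mid\tau_1\le X\le\tau_2]$, the same direction as in the increasing case. Equivalently, $h_{Y,1}\le h_{X,1}$ means $X_{\tau_1,\tau_2}\le_{st}Y_{\tau_1,\tau_2}$, and a decreasing function has the smaller mean under the stochastically larger law. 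So what you actually get is $\mathbb{E}[m_{X,1}(Y,\tau_2)\mid\tau_1\le Y\le\tau_2]\le H(X;\tau_1,\tau_2)$. Together with your (correct) first step, this only shows that this quantity is a common lower bound for $H(X;\tau_1,\tau_2)$ and $H(Y;\tau_1,\tau_2)$; no comparison follows. The paper's one-line proof of (ii) rests on the same faulty step, and (ii) is in fact false as stated.

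Take $X$ uniform on $[0,1]$, $F_Y(y)=y^2$ on $[0,1]$ (the paper's beta model with $c=2$), and $(\tau_1,\tau_2)=(0,1)$. For $0\le t<\tau_2\le 1$ (larger $\tau_2$ reduce to $\tau_2=1$), $h_{Y,1}(t,\tau_2)=\frac{2t}{\tau_2+t}\cdot\frac{1}{\tau_2-t}\le\frac{1}{\tau_2-t}=h_{X,1}(t,\tau_2)$. Also $m_{X,1}(x,\tau_2)=(\tau_2-x)/2$ is decreasing. Yet $H(X;0,1)=\frac14$, while $H(Y;0,1)=-\int_0^1(1-x^2)\ln(1-x^2)\,dx=\frac{10}{9}-\frac43\ln 2\approx 0.187$. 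Your intermediate claim fails visibly here: $\mathbb{E}[m_{X,1}(Y,1)\mid 0\le Y\le 1]=\frac16<\frac14=H(X;0,1)$.

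Your two steps do prove the variant of (ii) in which $m_{X,1}(\cdot,\tau_2)$ is \emph{increasing}, via $H(X;\tau_1,\tau_2)\le\mathbb{E}[m_{X,1}(Y,\tau_2)\mid\tau_1\le Y\le\tau_2]\le H(Y;\tau_1,\tau_2)$. Note, however, that $m_{X,1}(x,\tau_2)$ is positive and tends to $0$ as $x$ approaches the right end of the support inside $[0,\tau_2]$. So it can never be increasing when $\tau_2<\infty$, and that variant, like part (i) itself, has content only in the limit $\tau_2\to\infty$.
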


\begin{proof} (i) If $h_{X,1}(\tau_1,\tau_2) \leq h_{Y,1}(\tau_1,\tau_2)$, then from Lemma 1 (i) it follows that $m_{X,1}(x,\tau_2) \geq m_{Y,1}(\tau_1,\tau_2)$ and hence using Theorem 1 we get
	\begin{align}
		H(Y;\tau_1,\tau_2) & \textstyle = \frac{1}{\overline{F}_Y(\tau_1) - \overline{F}_Y(\tau_2)} \int_{\tau_1}^{\tau_2} m_{Y,1}(x,\tau_2) f_Y(x) dx \nonumber \\
		& \textstyle \leq \frac{1}{\overline{F}_Y(\tau_1) - \overline{F}_Y(\tau_2)} \int_{\tau_1}^{\tau_2} m_{X,1}(x,\tau_2) f_Y(x) dx \nonumber \\
		& \textstyle = \mathbb{E}[m_{X,1}(Y,\tau_2) \mid \tau_1 < Y \leq \tau_2],
	\end{align}
	where $f_Y(x)$ and $\overline{F}_Y(x)$ are the probability density function and the survival function of $Y$, respectively. By setting $\phi(x)=m_{X,1}(x,\tau_2)$, from Lemma 3, it follows that
	$$ \mathbb{E}[m_{X,1}(X,\tau_2) \mid \tau_1 \leq X \leq \tau_2] \geq \mathbb{E}[m_{X,1}(Y,\tau_2) \mid \tau_1 < Y \leq \tau_2], $$
	and hence from Theorem 1 we obtain
	\begin{equation}
		\mathbb{E}[m_{X,1}(Y,\tau_2) \mid \tau_1 < Y \leq \tau_2] \leq H(X;\tau_1,\tau_2).
	\end{equation}
	Now, the result in (38) follows immediately from (40) and (41).
	
	\noindent (ii) The proof is similar to (i) by interchanging the role of $X$ and $Y$ and reversing the inequalities.
\end{proof}

In the sequel, we give some applications of Theorem 8.

\begin{proposition}
	Let $X$ be a non-negative absolutely continuous random variable and $X_e$ be the equilibrium random variable corresponding to $X$. If $h_{X,1}(x,\tau_2)$ is increasing in $x$, then
	$$ H(X_e;\tau_1,\tau_2) \leq H(X;\tau_1,\tau_2). $$
\end{proposition}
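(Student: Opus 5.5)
We plan to obtain this as a direct application of Theorem 8 (i), taking $Y=X_e$. Theorem 8 (i) says that $H(Y;\tau_1,\tau_2)\leq H(X;\tau_1,\tau_2)$ once two conditions hold: first, $h_{X,1}(\tau_1,\tau_2)\leq h_{Y,1}(\tau_1,\tau_2)$ for all $(\tau_1,\tau_2)\in D_X\cap D_Y$; second, $m_{X,1}(x,\tau_2)$ is increasing in $x$. So the work reduces to checking these two conditions for the pair $(X,X_e)$ under the hypothesis that $h_{X,1}(x,\tau_2)$ is increasing in $x$.

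The first condition comes straight from Lemma 5. Its increasing case says that if $h_{X,1}(x,y)$ is increasing in $x$, then $h_{X_e,1}(x,y)\geq h_{X,1}(x,y)$. This is exactly $h_{X,1}\leq h_{Y,1}$ with $Y=X_e$, on the common domain $D_X\cap D_{X_e}$; we assume $E(X)<\infty$ so that $X_e$ is defined.

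The second condition is the main obstacle. Lemma 2 (ii) says that an increasing $h_{X,1}(x,y)$ in $x$ makes $m_{X,1}(x,y)$ \emph{decreasing} in $x$, which is the opposite of what Theorem 8 (i) requires. We will therefore not push Theorem 8 (i) through literally. Instead we rerun its proof by hand, taking the roles of the two variables with care.

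We begin from Theorem 1: $H(X_e;\tau_1,\tau_2)=\mathbb{E}[m_{X_e,1}(X_e,\tau_2)\mid \tau_1\leq X_e\leq\tau_2]$. Since $h_{X,1}\leq h_{X_e,1}$ pointwise, representation (12) of Lemma 1 (ii) gives $m_{X_e,1}(x,\tau_2)\leq m_{X,1}(x,\tau_2)$ for every $x$, so $H(X_e;\tau_1,\tau_2)\leq \mathbb{E}[m_{X,1}(X_e,\tau_2)\mid\cdot]$. Next we apply Lemma 3 with $\varphi(x)=m_{X,1}(x,\tau_2)$, which is decreasing by Lemma 2 (ii), and with the failure-rate inequality $h_{X,1}\leq h_{X_e,1}$. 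This orientation of Lemma 3 gives $\mathbb{E}[\varphi(X_e)\mid\cdot]\geq \mathbb{E}[\varphi(X)\mid\cdot]$, the wrong direction for chaining.

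Because of this, we expect the cleanest route to bypass the conditional-expectation chain altogether and compare the integrands in definition (5) directly. The point is that $u\mapsto -u\ln u$ is not monotone, so a pointwise stochastic ordering of $\overline{F}_{X_{\tau_1,\tau_2}}$ and $\overline{F}_{(X_e)_{\tau_1,\tau_2}}$ is not enough. We will first try Proposition 6 in the form $H(X_e)\leq H(X)-m_{X_e,1}\ln(m_{X_e,1}/m_{X,1})$. Since $m_{X_e,1}\leq m_{X,1}$, the correction term is nonnegative, so this falls short and needs an extra argument. Failing that, we will fall back on verifying the hypotheses of Theorem 8 (i) as the paper states them, invoking Lemma 5 for the failure-rate comparison and Lemma 2 for the monotonicity of $m_{X,1}$. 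The consistency of the monotonicity direction is the step we expect to need the most care.
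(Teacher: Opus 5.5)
\textbf{There is a genuine gap, and it cannot be closed, because the statement is false.}

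\emph{Your proposal.} You correctly show that Lemma 2(ii) makes $m_{X,1}(\cdot,\tau_2)$ decreasing, so Theorem 8(i) does not apply. You also correctly show that the chain $H(X_e;\tau_1,\tau_2)\le\mathbb{E}[m_{X,1}(X_e,\tau_2)\mid\tau_1\le X_e\le\tau_2]$ followed by Lemma 3 runs the wrong way. But your announced fallback is to ``verify the hypotheses of Theorem 8(i)'' anyway, and you have just shown those hypotheses fail. In fact, for finite $\tau_2$ they can never hold, since $0\le m_{X,1}(x,\tau_2)\le\tau_2-x\to0$.

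\emph{The paper's proof.} It assigns the roles differently. It shows $h_{X_e,1}(x,\tau_2)$ is increasing, so $m_{X_e,1}(x,\tau_2)$ is decreasing by Lemma 2(ii). It takes $h_{X,1}\le h_{X_e,1}$ from Lemma 5 and invokes Theorem 8(ii) with $X_e$ in the role of that theorem's $X$. That step does not hold up either. Under these hypotheses the argument of Theorem 8 yields only two inequalities:
\begin{itemize}
\item $H(X;\tau_1,\tau_2)\ge\mathbb{E}[m_{X_e,1}(X,\tau_2)\mid\tau_1\le X\le\tau_2]$, since $m_{X,1}\ge m_{X_e,1}$ pointwise;
\item $\mathbb{E}[m_{X_e,1}(X,\tau_2)\mid\tau_1\le X\le\tau_2]\le H(X_e;\tau_1,\tau_2)$, since $m_{X_e,1}(\cdot,\tau_2)$ is decreasing and the truncated $X_e$ is stochastically smaller.
\end{itemize}
These do not chain. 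This is the same directional obstruction you found. A genuine mirror of Theorem 8(i) needs an \emph{increasing} doubly truncated mean residual lifetime, and none is available here.

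\emph{Counterexample.} Take $X\sim U(0,1)$. Then $h_{X,1}(x,\tau_2)=1/(\tau_2-x)$ is increasing in $x$, $\overline{F}_{X_e}(x)=(1-x)^2$, and $X_e\le_{\mathrm{lr}}X$. With $\tau_1=0$ and $\tau_2=2/3$ we have $H(X;0,\tfrac23)=\tfrac16$. The truncated survival function of $X_e$ is $\frac{(1-x)^2-1/9}{8/9}$, which becomes $(1-s)(1-\tfrac{s}{2})$ under $x=\tfrac{2s}{3}$. A direct computation gives
\[
\int_0^1 -(1-s)\bigl(1-\tfrac{s}{2}\bigr)\ln\Bigl[(1-s)\bigl(1-\tfrac{s}{2}\bigr)\Bigr]\,ds=\frac{7}{36}+\frac{\ln 2}{12}.
\]
Hence $H(X_e;0,\tfrac23)=\tfrac{7}{54}+\tfrac{\ln 2}{18}\approx 0.16814$, and
\[
H(X_e;0,\tfrac23)-H(X;0,\tfrac23)=\frac{3\ln 2-2}{54}>0 .
\]
At $(\tau_1,\tau_2)=(0,1)$ the inequality does hold ($2/9<1/4$), so it fails only on part of $D_X$.

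As you suspected from the non-monotonicity of $u\mapsto -u\ln u$, likelihood-ratio ordering plus the IFR-type hypothesis does not order the CRIE. The most you can get in general is your Proposition 6 bound, $H(X_e;\tau_1,\tau_2)\le H(X;\tau_1,\tau_2)-m_{X_e,1}\ln(m_{X_e,1}/m_{X,1})$, whose correction term is nonnegative.
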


\begin{proof}
	It holds
	\begin{align*}
		h_{X_e,1}(x,\tau_2) & = \frac{f_{X_e}(x)}{\overline{F}_{X_e}(x) - \overline{F}_{X_e}(\tau_2)} = \frac{\overline{F}_X(x)}{\int_x^{\tau_2} \overline{F}_X(t) dt} \\ 
		& = \frac{1}{m_X(x) - \overline{F}_X(\tau_2)/\overline{F}_X(x)}.
	\end{align*}
	
	If $h_{X,1}(x,\tau_2)$ is increasing in $x$ for any $\tau_2$, it follows that $h_X(x)$ is increasing in $x$, implying $m_X(x)$ is decreasing in $x$ (see \cite{Willmot2001}). Since $\overline{F}_X(x)$ is decreasing in $x$, we get $h_{X_e,1}(x,\tau_2)$ is also increasing in $x$. Hence, from Lemma 2 (ii) the doubly truncated mean residual lifetime $m_{X_e,1}(x,\tau_2)$ of $X_e$ is decreasing in $x$. Also, since $h_{X,1}(x,\tau_2)$ is increasing, from Lemma 5 it follows $h_{X,1}(x,\tau_2) \leq h_{X_e,1}(x,\tau_2)$. Now, the result follows from Theorem 8 (ii).
\end{proof}

Navarro et al. \cite{Navarro2010} compared the CREs of the random variables X and $X^{(p)}$, where the distribution of the random variable $X^{(p)}$ belongs to the \textit{proportional odds family} (POF), which is family of distributions arising in several scientific areas, such as in survival and reliability data. For a further discussion on POF models, one may refer to Kirmani and Gupta \cite{Kirmani2001}. If $g_p(x)$ and $\overline{G}_p(x)$ denote respectively the probability density function and the survival function of $X^{(p)}$, then
\begin{equation} \textstyle
	g_p(x) = \frac{p f_X(x)}{[1 - (1 - p) \overline{F}_X(x)]^2} \quad \text{and} \quad \overline{G}_p(x) = \frac{p \overline{F}_X(x)}{1 - (1 - p) \overline{F}_X(x)}, \quad 0 < p < 1.
\end{equation}
The parameter $p$ is called \textit{``tilt parameter''}. This is because the hazard rate of this family is shifted above the hazard rate of the underlying (baseline) distribution.

\begin{proposition} 
	Let $X$ be a non-negative absolutely continuous random variable and $X^{(p)}$ be a random variable with survival function given by (42). If $m_{X,1}(x,\tau_2)$ is increasing in $x$, then
	$$ H(X^{(p)};\tau_1,\tau_2) \leq H(X;\tau_1,\tau_2). $$
\end{proposition}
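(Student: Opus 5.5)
The plan is to apply Theorem 8 (i) with $Y = X^{(p)}$. Two hypotheses are needed: monotonicity of $m_{X,1}(x,\tau_2)$, which is assumed, and the comparison $h_{X,1}(\tau_1,\tau_2) \leq h_{X^{(p)},1}(\tau_1,\tau_2)$ for all $(\tau_1,\tau_2)\in D_X\cap D_{X^{(p)}}$. The whole proof therefore reduces to establishing this generalized failure rate inequality.

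By Remark 3 (i), the GFR inequality is equivalent to $X^{(p)} \leq_{lr} X$. So the plan is to show that the ratio
$$
\frac{f_X(x)}{g_p(x)} = \frac{[1-(1-p)\overline{F}_X(x)]^2}{p}
$$
is increasing in $x$. This is immediate. Since $0<p<1$ and $\overline{F}_X$ is decreasing, the quantity $1-(1-p)\overline{F}_X(x)$ is positive and increasing in $x$, and hence so is its square.

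As an independent check, one can compute the GFR directly from (42). We have
$$
\overline{G}_p(x)-\overline{G}_p(\tau_2)=\frac{p[\overline{F}_X(x)-\overline{F}_X(\tau_2)]}{[1-(1-p)\overline{F}_X(x)][1-(1-p)\overline{F}_X(\tau_2)]},
$$
which gives
$$
h_{X^{(p)},1}(x,\tau_2)=h_{X,1}(x,\tau_2)\,\frac{1-(1-p)\overline{F}_X(\tau_2)}{1-(1-p)\overline{F}_X(x)}.
$$
For $x\le\tau_2$ the ratio on the right is at least $1$, because $\overline{F}_X(x)\ge\overline{F}_X(\tau_2)$. This confirms $h_{X,1}\le h_{X^{(p)},1}$ without going through Remark 3. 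Note also that $D_{X^{(p)}}=D_X$, since $\overline{G}_p$ is a strictly increasing function of $\overline{F}_X$.

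With both hypotheses verified, Theorem 8 (i) gives $H(X^{(p)};\tau_1,\tau_2)\le H(X;\tau_1,\tau_2)$. The only step requiring care is the algebra for the difference $\overline{G}_p(x)-\overline{G}_p(\tau_2)$. Everything else follows directly from the earlier results.
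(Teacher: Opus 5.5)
Your proposal is correct and is essentially the paper's proof. The paper also applies Theorem 8 (i) with $Y=X^{(p)}$ and justifies $h_{X,1}(x,\tau_2)\le h_{X^{(p)},1}(x,\tau_2)$ only by noting that $\overline{F}_X(x)\ge\overline{F}_X(\tau_2)$ for $x\le\tau_2$ and $0<p<1$; your factorization $h_{X^{(p)},1}(x,\tau_2)=h_{X,1}(x,\tau_2)\,\frac{1-(1-p)\overline{F}_X(\tau_2)}{1-(1-p)\overline{F}_X(x)}$ makes this explicit, and your likelihood-ratio route via Remark 3 (i) is an equally valid shortcut.

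One caveat concerns the statement itself, not your argument. For finite $\tau_2$, $m_{X,1}(x,\tau_2)$ is strictly positive on $D_X$ but tends to $0$ as $x$ increases to the right end of its admissible range, because the conditional law lives on an interval shrinking to a point. So it can never be increasing in $x$, and the proposition, like Theorem 8 (i), is only vacuously true unless one passes to $\tau_2\to\infty$, where the hypothesis becomes IMRL.
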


\begin{proof}
	Since $\overline{F}_X(x) \geq \overline{F}_X(\tau_2)$ for $x \leq \tau_2$ and $0 < p < 1$, it can be verified that $h_{X,1}(x,\tau_2) \leq h_{X^{(p)},1}(x,\tau_2)$, and hence the result follows immediately from Theorem 8 (i).
\end{proof}

\section{Monotonicity results and further bounds for $\boldsymbol{H(X;\tau_1,\tau_2)}$} \label{sec5}

To obtain the next result, we assume that for any fixed value of $\tau_2$, the derivative of $H(X;\tau_1,\tau_2)$ with respect to $\tau_1$ exists. First, based on $H(X;\tau_1,\tau_2)$, we define two new classes of distributions.

\begin{definition}
	A random variable $X$ is said to have an increasing (decreasing) cumulative residual interval entropy, or ICRIE (DCRIE), if $H(X;\tau_1,\tau_2)$ is increasing (decreasing) in $\tau_1$ for any fixed value of $\tau_2$.
\end{definition}

\begin{theorem}
	Let $X$ be a non-negative absolutely continuous random variable. The random variable $X$ is ICRIE (DCRIE), if and only if for all $(\tau_1,\tau_2)\in D$
	$$ \textstyle H(X;\tau_1,\tau_2) \geq (\leq) \, m_{X,1}(\tau_1,\tau_2). $$ 
\end{theorem}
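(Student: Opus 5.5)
The plan is to differentiate $H(X;\tau_1,\tau_2)$ with respect to $\tau_1$ and show that its sign is the sign of $H(X;\tau_1,\tau_2)-m_{X,1}(\tau_1,\tau_2)$. We use the representation of Theorem 1, written as
\[
H(X;\tau_1,\tau_2)=\frac{1}{\overline{F}_X(\tau_1)-\overline{F}_X(\tau_2)}\int_{\tau_1}^{\tau_2} m_{X,1}(t,\tau_2)f_X(t)\,dt .
\]
Differentiating by the quotient rule and the Leibniz rule gives two contributions. The first is $-\frac{m_{X,1}(\tau_1,\tau_2)f_X(\tau_1)}{\overline{F}_X(\tau_1)-\overline{F}_X(\tau_2)}$, coming from the lower limit. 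The second, coming from differentiating the normalising factor, is
\[
\frac{f_X(\tau_1)}{[\overline{F}_X(\tau_1)-\overline{F}_X(\tau_2)]^2}\int_{\tau_1}^{\tau_2} m_{X,1}(t,\tau_2)f_X(t)\,dt=\frac{f_X(\tau_1)}{\overline{F}_X(\tau_1)-\overline{F}_X(\tau_2)}\,H(X;\tau_1,\tau_2).
\]
Recognising the generalized failure rate (6), this yields the key identity
\[
\frac{\partial H(X;\tau_1,\tau_2)}{\partial \tau_1}=h_{X,1}(\tau_1,\tau_2)\big[H(X;\tau_1,\tau_2)-m_{X,1}(\tau_1,\tau_2)\big].
\]

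As a cross-check, we could instead differentiate representation (16) or the definition (5) directly. Write $A(\tau_1)=\overline{F}_X(\tau_1)-\overline{F}_X(\tau_2)$. The integrand vanishes at $x=\tau_1$, since $\overline{F}_{X_{\tau_1,\tau_2}}(\tau_1)=1$ and $\ln 1=0$. Differentiating $-u\ln u$ with $u=(\overline{F}_X(x)-\overline{F}_X(\tau_2))/A(\tau_1)$ and using $\partial u/\partial\tau_1 = u\,h_{X,1}(\tau_1,\tau_2)$ gives
\[
\frac{\partial H}{\partial \tau_1}=h_{X,1}(\tau_1,\tau_2)\int_{\tau_1}^{\tau_2}(-u\ln u-u)\,dx .
\]
By (8) this equals $h_{X,1}(H-m_{X,1})$, the same identity. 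I would present this second route because it is short, and it avoids any question of whether $m_{X,1}(t,\tau_2)$ is itself differentiable.

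The conclusion then follows. On $D_X$ we have $\overline{F}_X(\tau_1)-\overline{F}_X(\tau_2)>0$ and $f_X(\tau_1)\ge 0$, so $h_{X,1}(\tau_1,\tau_2)\ge 0$. Hence $\partial H/\partial\tau_1\ge(\le)\,0$ for all $\tau_1$, with $\tau_2$ fixed, exactly when $H\ge(\le)\,m_{X,1}$; this gives ICRIE (DCRIE).

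The main obstacle is the converse direction at points where $f_X(\tau_1)=0$. There the derivative vanishes regardless of the sign of $H-m_{X,1}$, so monotonicity gives no information about that sign. I would handle this by assuming, as is implicit in the paper's setting, that $f_X>0$ on the support, so that $h_{X,1}>0$ and the equivalence is exact. Alternatively, one can note that the inequality needs to hold only where $h_{X,1}>0$ and then extend by continuity.
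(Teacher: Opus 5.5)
Your identity $\partial H/\partial\tau_1=h_{X,1}(\tau_1,\tau_2)\,[H(X;\tau_1,\tau_2)-m_{X,1}(\tau_1,\tau_2)]$ is exactly the paper's (43), and reading off its sign is the paper's whole argument, so the proposal is correct and follows the paper; the paper reaches (43) by differentiating representation (16) and simplifying with (9), whereas both of your routes (Theorem 1, or (5) together with (8)) reach it without (9). Your caveat about $f_X(\tau_1)=0$ is a real point the paper passes over, but the assumption you need is $f_X(\tau_1)>0$ for every $\tau_1$ considered, not merely on the support: $(\tau_1,\tau_2)\in D_X$ allows $\tau_1$ to lie left of the support, where $h_{X,1}=0$, and there a continuity argument cannot save the ICRIE direction (with $\tau_2=\infty$, $X=2+Y$ and $\overline{F}_Y(y)=(1+y)^{-2}$, the map $t\mapsto\mathcal{E}(X;t)$ is nondecreasing, being constant $2$ on $[0,2]$ and then $2(t-1)$, yet $\mathcal{E}(X)=2<3=\mathbb{E}(X)$).
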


\begin{proof}
	For any fixed value of $\tau_2$ and using (8), the differentiation of both sides of (40) gives
	\begin{align*}
		\textstyle \frac{\partial H(X;\tau_1,\tau_2)}{\partial \tau_1} & = \textstyle \ln[\overline{F}_X(\tau_1) - \overline{F}_X(\tau_2)]\left\{1 + \frac{\partial m_{X,1}(\tau_1,\tau_2)}{\partial \tau_1}\right\} - m_{X,1}(\tau_1,\tau_2) h_{X,1}(\tau_1,\tau_2) \\
		& \quad + h_{X,1}(\tau_1,\tau_2)\left\{H(X;\tau_1,\tau_2) - m_{X,1}(\tau_1,\tau_2)\ln[\overline{F}_X(\tau_1) - \overline{F}_X(\tau_2)]\right\}.
	\end{align*}
	Hence, using (9) it follows that
	\begin{equation}
		\textstyle \frac{\partial H(X;\tau_1,\tau_2)}{\partial \tau_1} = h_{X,1}(\tau_1,\tau_2)\left\{H(X;\tau_1,\tau_2) - m_{X,1}(\tau_1,\tau_2)\right\}.
	\end{equation}
	Now, the result follows directly from (43).
\end{proof}

\begin{corollary}
	Let $X$ be a non-negative absolutely continuous random variable. If for any fixed value of $\tau_2$, $m_{X,1}(\tau_1,\tau_2)$ is increasing (decreasing) in $\tau_1$, then the random variable $X$ is ICRIE (DCRIE). 
\end{corollary}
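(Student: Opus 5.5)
The plan is to combine Theorem 1 with the derivative formula (43) from the proof of Theorem 9. Throughout, $\tau_2$ is held fixed and we argue only about monotonicity in $\tau_1$.

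By Theorem 9 it suffices to prove $H(X;\tau_1,\tau_2)\geq(\leq)\,m_{X,1}(\tau_1,\tau_2)$ for all $(\tau_1,\tau_2)\in D_X$. Theorem 1 gives
\[
H(X;\tau_1,\tau_2)=\mathbb{E}[m_{X,1}(X,\tau_2)\mid \tau_1\leq X\leq \tau_2].
\]
The conditioning event forces $X\geq\tau_1$. So if $m_{X,1}(\cdot,\tau_2)$ is increasing, then $m_{X,1}(X,\tau_2)\geq m_{X,1}(\tau_1,\tau_2)$ almost surely on the event. Taking conditional expectations yields $H(X;\tau_1,\tau_2)\geq m_{X,1}(\tau_1,\tau_2)$. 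Theorem 9 then says $X$ is ICRIE. The decreasing case is identical with the inequalities reversed, giving DCRIE.

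An alternative route goes directly through (43):
\[
\frac{\partial H}{\partial\tau_1}=h_{X,1}(\tau_1,\tau_2)\{H-m_{X,1}\}.
\]
Here $h_{X,1}\ge 0$, so the sign of the derivative is the sign of $H-m_{X,1}$, which the previous paragraph controls. Either route works, and I would present the first since it invokes Theorem 9 cleanly.

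The only delicate point is the pointwise comparison on the conditioning set. The variable $X$ ranges over $[\tau_1,\tau_2]$, while $m_{X,1}(x,\tau_2)$ is defined only for $x<\tau_2$ with $F_X(x)<F_X(\tau_2)$. The endpoint $x=\tau_2$ has probability zero, however, so the almost-sure inequality suffices. I expect no real obstacle beyond noting this, together with the standing assumption that the derivative of $H$ in $\tau_1$ exists, which Theorem 9 requires.
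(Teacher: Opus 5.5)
Your proposal is correct and matches the paper's proof: Theorem 1 writes $H(X;\tau_1,\tau_2)$ as the conditional expectation of $m_{X,1}(X,\tau_2)$, monotonicity of $m_{X,1}(\cdot,\tau_2)$ on $\{X\ge\tau_1\}$ gives $H(X;\tau_1,\tau_2)\ge(\le)\, m_{X,1}(\tau_1,\tau_2)$, and Theorem 9 concludes. Your remark about the null set where $m_{X,1}(x,\tau_2)$ is undefined is a harmless refinement the paper omits, though it should cover every $x$ with $F_X(x)=F_X(\tau_2)$, not only $x=\tau_2$; that whole set also has zero conditional probability.
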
 

\begin{proof}
	From Theorem 1 we have
	$$ \textstyle H(X;\tau_1,\tau_2) = \frac{1}{\overline{F}_X(\tau_1) - \overline{F}_X(\tau_2)} \int_{\tau_1}^{\tau_2} m_{X,1}(x,\tau_2) f_X(x)\, dx. $$
	If $m_{X,1}(\tau_1,\tau_2)$ is increasing (decreasing) in $\tau_1$, then for $x \geq \tau_1$ it holds $m_{X,1}(x,\tau_2) \geq (\leq) m_{X,1}(\tau_1,\tau_2)$, and thus we obtain
	$$ \textstyle H(X;\tau_1,\tau_2) \geq (\leq) \frac{m_{X,1}(\tau_1,\tau_2)}{\overline{F}_X(\tau_1) - \overline{F}_X(\tau_2)} \int_{\tau_1}^{\tau_2} f_X(x)\, dx, $$
	i.e.,
	$$ \textstyle H(X;\tau_1,\tau_2) \geq (\leq) m_{X,1}(\tau_1,\tau_2). $$
	Now, the result follows from Theorem 9.
\end{proof}

\begin{example}
	Using Corollary 7, to illustrate the behavior of the mean residual lifetime function $ m_{X,1}(\tau_1, \tau_2) $ and the associated entropy measure $ H(X; \tau_1, \tau_2) $, we present graphical representations for specific distributional cases. These visualizations help us to understand the monotonicity properties of $ H(X; \tau_1, \tau_2) $ with respect to $ \tau_1 $ under varying parameter settings. In Figure~\ref{fig:5.1}, we consider the beta distribution with different shape parameters, while in Figure~\ref{fig:5.2}, we explore the exponential distribution with various rate parameters. Each figure includes plots of both the mean residual lifetime function and the entropy measure for a fixed value of $ \tau_2 $, showing how these quantities evolve as $ \tau_1 $ increases. We observe that for both distributions, the doubly truncated mean residual lifetime $ m_{X,1}(\tau_1, \tau_2) $ is a decreasing function in $ \tau_1 $, and the same is also true for $ H(X; \tau_1, \tau_2) $, as expected by Corollary 7. 
	
	\begin{figure}[H]
			\includegraphics[width=.95\linewidth]{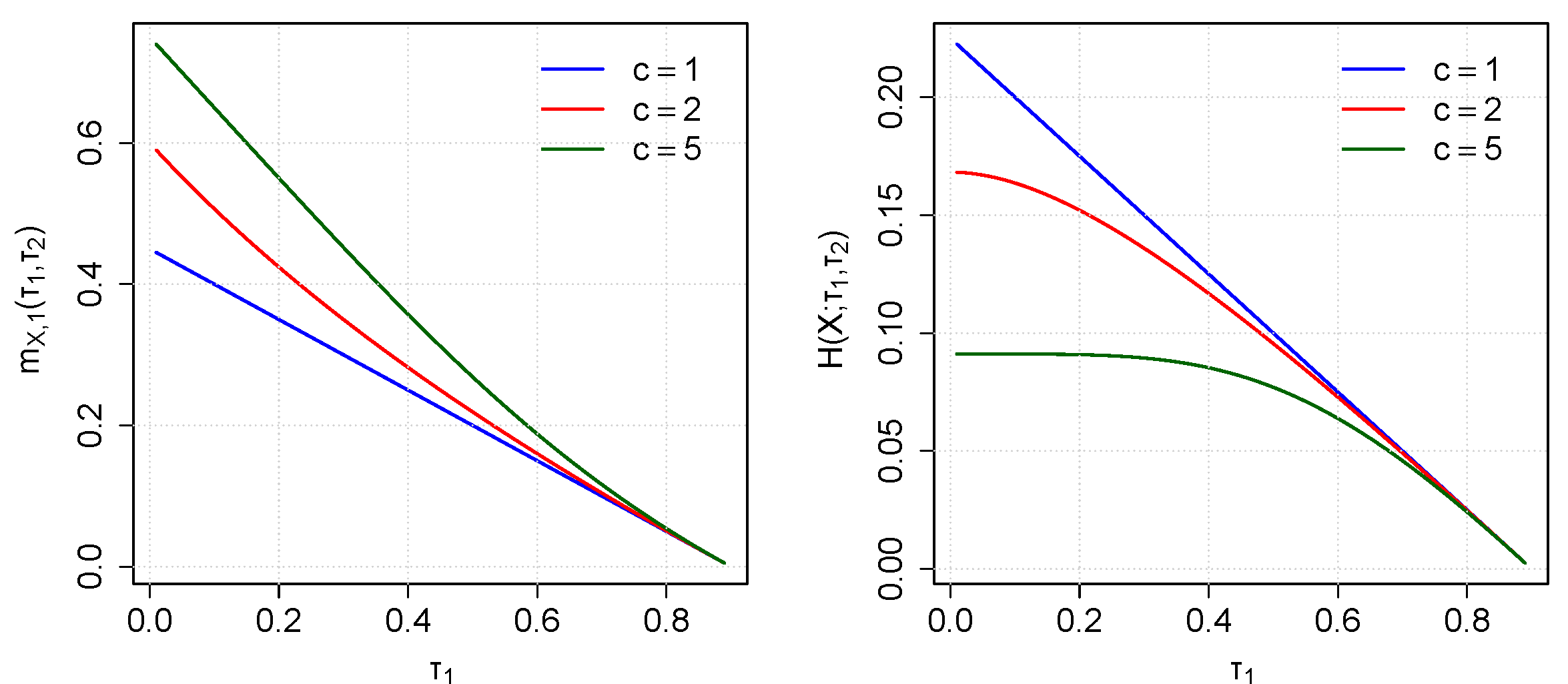}
			\caption{Plots of the mean residual lifetime function $ m_{X,1}(\tau_1,\tau_2) $ (left panel) and the entropy measure $ H(X; \tau_1, \tau_2) $ (right panel) versus $\tau_1 $ with fixed $ \tau_2 = 0.9 $ for the beta distribution with $\overline{F}(x) = 1 - x^c$ for $0 < x < 1$, where $ c = 1, 2, 5 $.}
			\label{fig:5.1}
	\end{figure}

	\begin{figure}[H]
			\includegraphics[width=.95\linewidth]{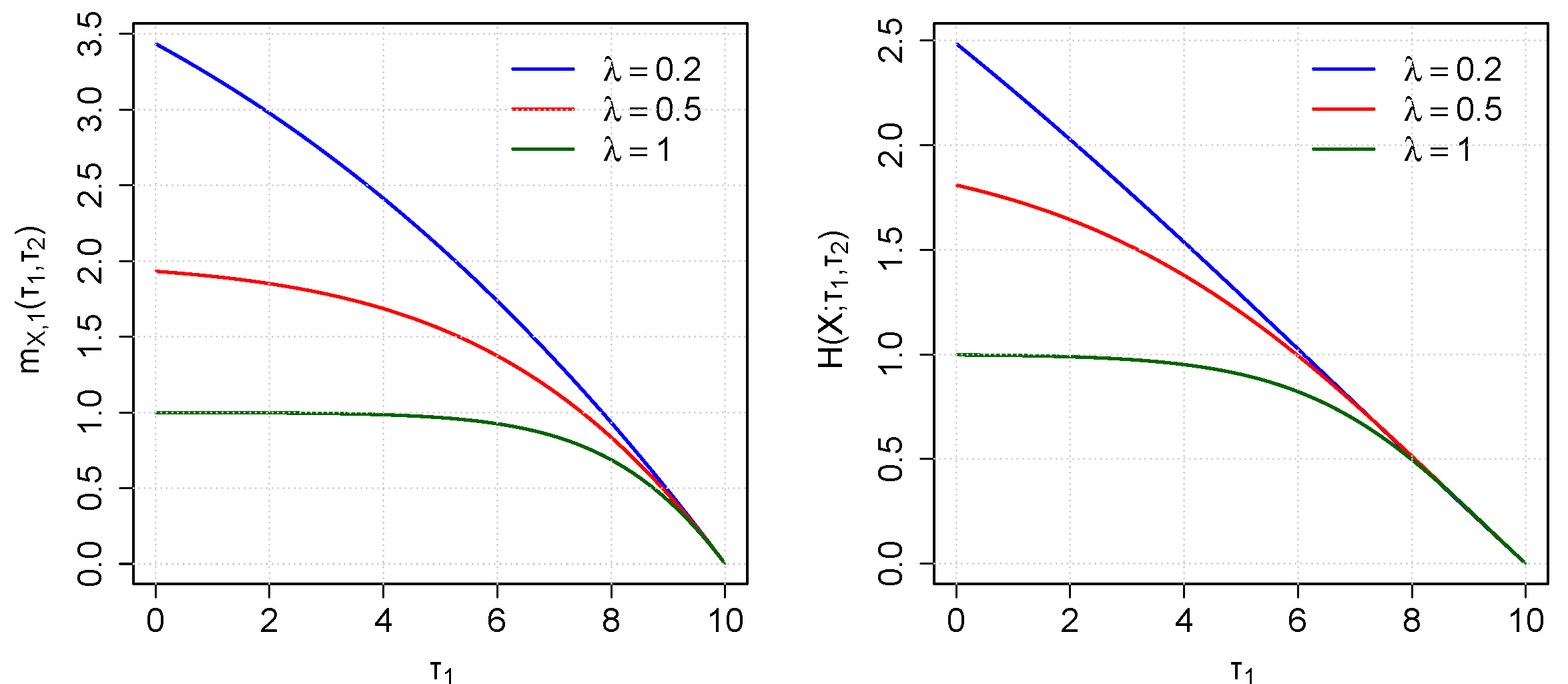}
			\caption{Plots of the mean residual lifetime function $ m_{X,1}(\tau_1,\tau_2) $ (left panel) and the entropy measure $ H(X; \tau_1, \tau_2) $ (right panel) versus $ \tau_1 $ with fixed $ \tau_2 = 10 $ for the exponential distribution with $\overline{F}(x) = \exp(-\lambda x)$ for $x \geq 0$, where $ \lambda = 0.2, 0.5, 1 $.}
			\label{fig:5.2}
	\end{figure}
\end{example}

We know from Lemma 2 (ii) that if $h_{X,1}(x,y)$ is decreasing (increasing) in $x$, then $m_{X,1}(x,y)$ is increasing (decreasing) in $x$. Therefore, the condition that $h_{X,1}(x,y)$ is decreasing (increasing) in $x$ is stronger than the condition that $m_{X,1}(x,y)$ is increasing (decreasing) in $x$. In the following proposition, based on the relevation transform representation of $H(X;\tau_1,\tau_2)$ given by Theorem 3, we prove that Corollary 7 also holds under the stronger condition that $h_{X,1}(x,y)$ is decreasing (increasing) in $x$ and, furthermore, is given a two-sided bound for $H(X;\tau_1,\tau_2)$.

\begin{proposition} 
	Let $X$ be a non-negative absolutely continuous random variable. If $h_{X,1}(x,y)$ is decreasing (increasing) in $x$, then, for all $(\tau_1,\tau_2)\in D_X$
	$$ \textstyle m_{X,1}(\tau_1,\tau_2) \leq (\geq) H(X;\tau_1,\tau_2) \leq (\geq) \frac{1}{2} h_{X,1}(\tau_1,\tau_2)\mathbb{E}[(X - \tau_1)^2 \mid \tau_1 \leq X \leq \tau_2], $$
	and thus $X$ is ICRIE (DCRIE).
\end{proposition}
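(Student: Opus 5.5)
The plan is to work with the representation of $H(X;\tau_1,\tau_2)$ through the cumulative generalized hazard. By Lemma~1(i), $-\ln \overline{F}_{X_{\tau_1,\tau_2}}(x)=\Lambda_X(x;\tau_1,\tau_2)=\int_{\tau_1}^x h_{X,1}(u,\tau_2)\,du$. Hence, directly from (5), or equivalently from Theorem~3 together with (25) and (8),
\[
H(X;\tau_1,\tau_2)=\int_{\tau_1}^{\tau_2}\overline{F}_{X_{\tau_1,\tau_2}}(x)\,\Lambda_X(x;\tau_1,\tau_2)\,dx .
\]
Both bounds will be obtained by comparing the integrand with a simpler function, using the monotonicity of $u\mapsto h_{X,1}(u,\tau_2)$ for fixed $\tau_2$.

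\emph{Upper bound (decreasing case).} If $h_{X,1}(u,\tau_2)$ is decreasing in $u$, then for $\tau_1\le u\le x$ we have $h_{X,1}(u,\tau_2)\le h_{X,1}(\tau_1,\tau_2)$. Integrating in $u$ gives $\Lambda_X(x;\tau_1,\tau_2)\le h_{X,1}(\tau_1,\tau_2)(x-\tau_1)$. Substituting this into the representation yields
\[
H(X;\tau_1,\tau_2)\le h_{X,1}(\tau_1,\tau_2)\int_{\tau_1}^{\tau_2}(x-\tau_1)\,\overline{F}_{X_{\tau_1,\tau_2}}(x)\,dx .
\]
Next, integrate by parts with $\overline{F}_{X_{\tau_1,\tau_2}}(\tau_2)=0$ and $-\tfrac{d}{dx}\overline{F}_{X_{\tau_1,\tau_2}}=f_{X_{\tau_1,\tau_2}}$. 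This identifies the last integral as $\tfrac12\mathbb{E}[(X-\tau_1)^2\mid \tau_1\le X\le\tau_2]$, which gives the upper bound. In the increasing case every inequality reverses.

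\emph{Lower bound.} If $h_{X,1}(x,y)$ is decreasing in $x$, Lemma~2(ii) shows that $m_{X,1}(x,\tau_2)$ is increasing in $x$. Theorem~1 then gives
\[
H(X;\tau_1,\tau_2)=\mathbb{E}[m_{X,1}(X,\tau_2)\mid\tau_1\le X\le\tau_2]\ge m_{X,1}(\tau_1,\tau_2),
\]
which is exactly the argument of Corollary~7. Since the text before the proposition stresses the relevation representation, I would also offer a direct route. Write $H=\int_{\tau_1}^{\tau_2}\overline{F}_{X_{\tau_1,\tau_2}}(x)\,\Lambda_X\,dx$ and apply Fubini as in the proof of Theorem~3. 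This gives $H=\mathbb{E}[m_{X,1}(X,\tau_2)\mid\tau_1\le X\le\tau_2]$, and monotonicity of $m_{X,1}$ finishes the argument in the same way. The increasing case is symmetric: $m_{X,1}$ is then decreasing and the inequality reverses. Finally, $H\ge(\le)\,m_{X,1}$ for all $(\tau_1,\tau_2)$ is, by Theorem~9, equivalent to $X$ being ICRIE (DCRIE).

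The only step needing real care is the integration by parts for the second moment. One has to justify that the boundary terms vanish: at $\tau_2$ because $\overline{F}_{X_{\tau_1,\tau_2}}(\tau_2)=0$, and at $\tau_1$ because of the factor $(x-\tau_1)^2$. One also has to check that the direction of the hazard monotonicity is applied with $\tau_2$ held fixed, which is how Lemma~2 is phrased. Everything else is a direct substitution of earlier results.
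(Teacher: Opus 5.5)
Your proof is correct, and for the second-moment bound it is the paper's argument. The paper also bounds $\int_{\tau_1}^x h_{X,1}(u,\tau_2)\,du$ by $(x-\tau_1)h_{X,1}(\tau_1,\tau_2)$, integrates against $\overline{F}_{X_{\tau_1,\tau_2}}$, and uses $\int_{\tau_1}^{\tau_2}(x-\tau_1)\overline{F}_{X_{\tau_1,\tau_2}}(x)\,dx=\tfrac{1}{2}\mathbb{E}[(X-\tau_1)^2\mid\tau_1\le X\le\tau_2]$. It reaches $H(X;\tau_1,\tau_2)=\int_{\tau_1}^{\tau_2}\overline{F}_{X_{\tau_1,\tau_2}}(x)\Lambda_X(x;\tau_1,\tau_2)\,dx$ via Theorem 3 and (25) rather than reading it off (5), but that is the same identity.

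Where you genuinely differ is the bound involving $m_{X,1}$. The paper does not go through Lemma 2(ii) and Theorem 1. It uses the other half of the same sandwich, namely $(x-\tau_1)h_{X,1}(x,\tau_2)\le\Lambda_X(x;\tau_1,\tau_2)$ in the decreasing case. Combined with the pointwise identity $\overline{F}_{X_{\tau_1,\tau_2}}(x)\,h_{X,1}(x,\tau_2)=f_{X_{\tau_1,\tau_2}}(x)$, this turns the lower integral directly into $\int_{\tau_1}^{\tau_2}(x-\tau_1)f_{X_{\tau_1,\tau_2}}(x)\,dx=m_{X,1}(\tau_1,\tau_2)$. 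So a single two-sided inequality on the cumulative hazard produces both bounds. The paper thereby gets a proof of the conclusion of Corollary 7 that is independent of Lemma 2 and Theorem 1, as it announces just before the proposition.

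Your route is less self-contained, since it leans on the Fubini computation of Theorem 1 and the differentiation in Lemma 2. In exchange, it makes explicit that the comparison with $m_{X,1}$ needs only monotonicity of $m_{X,1}(\cdot,\tau_2)$. Hazard monotonicity is then really used only for the second-moment bound.

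One remark applies equally to both arguments: for finite $\tau_2$ the decreasing case is vacuous. Indeed, $\Lambda_X(x;\tau_1,\tau_2)=-\ln\overline{F}_{X_{\tau_1,\tau_2}}(x)\to+\infty$ as $x\uparrow\tau_2$. A decreasing $h_{X,1}(\cdot,\tau_2)$ would instead give $\Lambda_X(x;\tau_1,\tau_2)\le(\tau_2-\tau_1)h_{X,1}(\tau_1,\tau_2)<\infty$. The substance of the proposition is therefore the increasing (DCRIE) half, together with the limit $\tau_2\to\infty$ used in Corollary 8.
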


\begin{proof}
	If $h_{X,1}(x,\tau_2)$ is decreasing (increasing) in $x$, then, for $\tau_1 \leq u \leq x$ it holds
	$$ \textstyle (x - \tau_1)h_{X,1}(x,\tau_2) \leq (\geq) \int_{\tau_1}^x h_{X,1}(u,\tau_2)\, du \leq (\geq) (x - \tau_1)h_{X,1}(\tau_1,\tau_2). $$
	Then, from (25) it follows that
	\begin{align*} 
		& \textstyle \overline{F}_{X_{\tau_1,\tau_2}}(x) + \overline{F}_{X_{\tau_1,\tau_2}}(x)(x - \tau_1)h_{X,1}(x,\tau_2) \leq (\geq) (\overline{F}_{X_{\tau_1,\tau_2}} \# \overline{F}_{X_{\tau_1,\tau_2}})(x) \\
		& \textstyle \qquad \qquad \leq (\geq) \overline{F}_{X_{\tau_1,\tau_2}}(x) + \overline{F}_{X_{\tau_1,\tau_2}}(x)(x - \tau_1)h_{X,1}(\tau_1,\tau_2).
	\end{align*}
	Integrating all sides from $\tau_1$ to $\tau_2$ and using (8) we obtain
	\begin{align*}
		& \textstyle m_{X,1}(\tau_1,\tau_2) + \int_{\tau_1}^{\tau_2} \overline{F}_{X_{\tau_1,\tau_2}}(x)(x - \tau_1)h_{X,1}(x,\tau_2)\, dx \\
		& \textstyle \qquad \qquad \leq (\geq) \textstyle  \int_{\tau_1}^{\tau_2} (\overline{F}_{X_{\tau_1,\tau_2}} \# \overline{F}_{X_{\tau_1,\tau_2}})(x)\, dx \\
		& \textstyle \qquad \qquad \leq (\geq) m_{X,1}(\tau_1,\tau_2) + \int_{\tau_1}^{\tau_2} \overline{F}_{X_{\tau_1,\tau_2}}(x)(x - \tau_1)h_{X,1}(\tau_1,\tau_2)\, dx.
	\end{align*}
	Therefore, from Theorem 3 the above relationship yields
	\begin{align*}
		\textstyle \int_{\tau_1}^{\tau_2} \overline{F}_{X_{\tau_1,\tau_2}}(x)(x - \tau_1)h_{X,1}(x,\tau_2)\, dx & \leq (\geq) H(X;\tau_1,\tau_2) \\
		& \textstyle \leq (\geq) \int_{\tau_1}^{\tau_2} \overline{F}_{X_{\tau_1,\tau_2}}(x)(x - \tau_1)h_{X,1}(\tau_1,\tau_2)\, dx, 
	\end{align*}
	or equivalently,
	\begin{align*} 
		& \textstyle \frac{1}{\overline{F}_X(\tau_1) - \overline{F}_X(\tau_2)} \int_{\tau_1}^{\tau_2} (x - \tau_1)f_X(x)\, dx \\ 
		& \qquad \qquad \textstyle \leq (\geq) H(X;\tau_1,\tau_2) \leq (\geq) \frac{h_{X,1}(\tau_1,\tau_2)}{\overline{F}_X(\tau_1) - \overline{F}_X(\tau_2)} \int_{\tau_1}^{\tau_2} (x - \tau_1)[\overline{F}_X(x) - \overline{F}_X(\tau_2)]\, dx, 
	\end{align*}
	i.e.,
	$$ \textstyle m_{X,1}(\tau_1,\tau_2) \leq (\geq) H(X;\tau_1,\tau_2) \leq (\geq) \frac{1}{2} h_{X,1}(\tau_1,\tau_2) \mathbb{E}[(X - \tau_1)^2 \mid \tau_1 \leq X \leq \tau_2]. $$
	Now, since
	$$ \textstyle \mathbb{E}[(X - \tau_1)^2 \mid \tau_1 \leq X \leq \tau_2] = \frac{2}{\overline{F}_X(\tau_1) - \overline{F}_X(\tau_2)} \int_{\tau_1}^{\tau_2} (x - \tau_1)[\overline{F}_X(x) - \overline{F}_X(\tau_2)]\, dx, $$
	the result follows from the above two-sided inequalities.
\end{proof}

By setting $\tau_2 \to \infty$, from Proposition 10 we immediately obtain the following corollary.

\begin{corollary} 
	Let $X$ be a non-negative absolutely continuous random variable. If $X$ is DFR (IFR) then
	\begin{equation}
		\textstyle m_X(t) \leq (\geq) \mathcal{E}(X;t) \leq (\geq) \frac{1}{2} h_X(t) \mathbb{E}[(X - t)^2 \mid X > t],
	\end{equation}
	and
	\begin{equation}
		\textstyle \mathbb{E}(X) \leq (\geq) \mathcal{E}(X) \leq (\geq) \frac{1}{2} f_X(0) \mathbb{E}[X^2].
	\end{equation}
\end{corollary}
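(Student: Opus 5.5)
The plan is to obtain the corollary as the limiting case $\tau_2\to\infty$ of Proposition 10, and then to specialize further to $t=0$.

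\textbf{Hypothesis.} First check that the hypothesis transfers. With $\tau_2\to\infty$ we have $h_{X,1}(x,\infty)=h_X(x)$. To be safe, note that if $X$ is DFR (IFR) then $h_X$ is decreasing (increasing). The proof of Proposition 10 only uses monotonicity of $u\mapsto h_{X,1}(u,\tau_2)$ on $[\tau_1,\tau_2]$. In the limit this becomes monotonicity of $h_X$ on $[t,\infty)$, which is exactly the DFR (IFR) assumption.

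\textbf{Direct route.} Rather than pass to a limit in the final inequality, which would need justification, I would rerun the proof of Proposition 10 with $\tau_2=\infty$.

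For $t\le u\le x$, DFR (IFR) gives
$$(x-t)h_X(x)\le(\ge)\int_t^x h_X(u)\,du\le(\ge)(x-t)h_X(t).$$
Multiply by $\overline{F}_X(x)/\overline{F}_X(t)$ and integrate over $(t,\infty)$. By Remark 4(i) (Theorem 3 with $\tau_2=\infty$), the middle term equals $\mathcal{E}(X;t)$. The left side equals
$$\frac{1}{\overline{F}_X(t)}\int_t^\infty (x-t)f_X(x)\,dx=m_X(t).$$
The right side equals $h_X(t)\int_t^\infty (x-t)\overline{F}_X(x)\,dx/\overline{F}_X(t)$. Integration by parts shows this is $\tfrac12 h_X(t)\mathbb{E}[(X-t)^2\mid X>t]$. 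These three identifications give (44).

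\textbf{Specialization to $t=0$.} Setting $t=0$ gives $m_X(0)=\mathbb{E}(X)$, $\mathcal{E}(X;0)=\mathcal{E}(X)$, and $h_X(0)=f_X(0)/\overline{F}_X(0)=f_X(0)$ since $X$ is non-negative and continuous. This yields (45).

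\textbf{Main obstacle.} The difficulty is integrability in the IFR/DFR limits over an infinite range, in particular the boundary terms in the integration by parts. I would assume $\mathbb{E}[X^2]<\infty$; if the second moment is infinite, the upper bound is trivially true. The boundary term $(x-t)^2\overline{F}_X(x)\to0$ then follows from finiteness of the second moment.
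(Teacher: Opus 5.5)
Your proposal is correct and is essentially the paper's argument. The paper obtains the corollary by letting $\tau_2\to\infty$ in Proposition 10, and rerunning that proof with $h_{X,1}(x,\infty)=h_X(x)$ and then setting $t=0$ is exactly this specialization. Working directly at $\tau_2=\infty$ is in fact the right reading: for finite $\tau_2$, Lemma 1(i) makes $h_{X,1}(\cdot,\tau_2)$ non-integrable near the upper truncation point, so the decreasing-GFR hypothesis can never hold there. Your integrability concern also disappears via Tonelli, which gives $\int_t^\infty (x-t)\overline{F}_X(x)\,dx=\tfrac12\int_t^\infty (x-t)^2 f_X(x)\,dx$ in $[0,\infty]$ with no boundary term.
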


\begin{remark}
	(i) Asadi and Zohrevand \cite{Asadi2007} proved in Corollary 4.4 that if $X$ is IMRL (DMRL) then 
	$$ \textstyle \mathcal{E}(X;t) \geq (\leq) m_X(t). $$
	Since the DFR (IFR) class is a subclass of the IMRL (DMRL) class, from Corollary 8 it follows that Corollary 4.4 of \cite{Asadi2007} is still valid under the stronger condition that $X$ is DFR (IFR).  
	(ii) We consider that $X$ has an exponential distribution with parameter $\lambda > 0$. Since it has a constant hazard rate, it follows that $X$ is DFR and IFR. In this case (44) and (45) must hold as equalities, i.e., we must have
	\begin{equation}
		\textstyle \mathcal{E}(X;t) = m_X(t) = \frac{1}{2} h_X(t) \mathbb{E}[(X - t)^2 \mid X > t],
	\end{equation}
	and
	\begin{equation}
		\textstyle \mathcal{E}(X) = \mathbb{E}(X) = \frac{1}{2} f_X(0) \mathbb{E}[X^2].
	\end{equation}
	Since
	$$ \textstyle \mathcal{E}(X;t) = \mathcal{E}(X) = \frac{1}{\lambda}, \quad h_X(t) = f_X(0) = \lambda, \quad m_X(t) = \mathbb{E}(X) = \frac{1}{\lambda}, $$
	and
	$$ \textstyle \mathbb{E}[(X - t)^2 \mid X > t] = \mathbb{E}[X^2] = \frac{2}{\lambda^2}, $$
	it follows that (46) and (47) are valid. Therefore, the bounds in (44) and (45) are sharp.
\end{remark}

The following corollary considers the behavior of the ICRIE (DCRIE) class under increasing linear transformations.

\begin{corollary} 
	Let $X$ be a non-negative absolutely continuous random variable, $a > 0$ and $b \geq 0$ be constants. If $X$ is ICRIE (DCRIE), then $aX + b$ is ICRIE (DCRIE).
\end{corollary}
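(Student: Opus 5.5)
The natural route is Proposition 1, which gives, for $Y=aX+b$ with $a>0$, $b\ge 0$,
$$ \textstyle H(Y;\tau_1,\tau_2)=a\,H\!\left(X;\frac{\tau_1-b}{a},\frac{\tau_2-b}{a}\right) $$
for all $(\tau_1,\tau_2)\in D_Y$. I will first check that this identity is legitimate on the right domain. Since $\overline{F}_Y(y)=\overline{F}_X((y-b)/a)$, we have $(\tau_1,\tau_2)\in D_Y$ if and only if $\left(\frac{\tau_1-b}{a},\frac{\tau_2-b}{a}\right)\in D_X$. So the map $(\tau_1,\tau_2)\mapsto\left(\frac{\tau_1-b}{a},\frac{\tau_2-b}{a}\right)$ is a bijection from $D_Y$ onto $D_X$, and the formula is meaningful throughout $D_Y$. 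If $\tau_1<b$, the truncation point for $X$ is negative; there $\overline{F}_X\equiv 1$ and the formula still holds, since Theorem 4 is a pure change of variables.

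With $\tau_2$ fixed, the second argument $\frac{\tau_2-b}{a}$ is a fixed value $s_2$. The map $\tau_1\mapsto s_1=\frac{\tau_1-b}{a}$ is strictly increasing because $a>0$. Suppose $X$ is ICRIE (respectively DCRIE), so that $s_1\mapsto H(X;s_1,s_2)$ is increasing (decreasing) for every fixed $s_2$. Then $\tau_1\mapsto H(X;\frac{\tau_1-b}{a},s_2)$ is a composition of an increasing map with an increasing (decreasing) function, hence increasing (decreasing). Multiplying by the positive constant $a$ preserves this. Therefore $H(Y;\tau_1,\tau_2)$ is increasing (decreasing) in $\tau_1$ for each fixed $\tau_2$, which means $Y$ is ICRIE (DCRIE).

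An alternative check goes through Theorem 9. One verifies that $m_{Y,1}(\tau_1,\tau_2)=a\,m_{X,1}\!\left(\frac{\tau_1-b}{a},\frac{\tau_2-b}{a}\right)$, using the substitution $y=ax+b$ in (8). The characterization $H\ge(\le)\, m_{X,1}$ then transfers directly to $Y$ after multiplying by $a$.

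The only point needing care is the domain bookkeeping in the first step: every pair $(\tau_1,\tau_2)$ admissible for $Y$ must map to an admissible pair for $X$, and monotonicity in $\tau_1$ must be examined only along admissible pairs. I do not expect any real obstacle beyond this.
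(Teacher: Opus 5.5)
Your proof is correct. Your main argument, however, is not the paper's route; your ``alternative check'' is essentially the paper's proof.

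\textbf{The paper's route.} It works through the characterization of Theorem 9. Writing $s_i=(\tau_i-b)/a$:
\begin{itemize}
\item from $X$ being ICRIE (DCRIE) it gets $H(X;s_1,s_2)\ge(\le)\,m_{X,1}(s_1,s_2)$ and multiplies by $a$;
\item it uses Proposition 1 together with the scaling identity $m_{aX+b,1}(\tau_1,\tau_2)=a\,m_{X,1}(s_1,s_2)$ to obtain $H(aX+b;\tau_1,\tau_2)\ge(\le)\,m_{aX+b,1}(\tau_1,\tau_2)$;
\item it concludes by Theorem 9 in the converse direction.
\end{itemize}

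\textbf{Your route.} You read the monotonicity directly off Proposition 1. For fixed $\tau_2$, the map $\tau_1\mapsto a\,H(X;(\tau_1-b)/a,s_2)$ is a positive multiple of a monotone function composed with an increasing affine map. This is shorter and more elementary. It needs neither the scaling of $m_{X,1}$ nor Theorem 9, whose proof differentiates $H$ in $\tau_1$ via (43). That in turn rests on the standing assumption that $\partial H/\partial\tau_1$ exists. You use nothing beyond the change of variables.

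Your domain bookkeeping is sound, with one point left implicit. For $s_1<0$ the integrand vanishes on $[s_1,0]$, and $\overline{F}_X(s_1)=\overline{F}_X(0)=1$. Hence $H(X;s_1,s_2)=H(X;0,s_2)$, so the monotonicity hypothesis extends trivially to the pairs with $\tau_1<b$.

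\textbf{What the paper's route buys.} It matches the theorem that follows. For a nonlinear increasing $\varphi$, Theorem 4 places the weight $\varphi'(x)$ inside the integral. Then $H(\varphi(X);\cdot,\cdot)$ is no longer a rescaled copy of $H(X;\cdot,\cdot)$, and your direct argument does not carry over. This is why the proof of Theorem 10 must pass through the integral characterization (49) of Proposition 11, which is equivalent to Theorem 9. The paper's proof of Corollary 9 is the linear special case of that template.
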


\begin{proof} 
	Under the conditions of theorem, from Theorem 1 it holds that
	\begin{equation}
		\textstyle a H\left(X;\frac{\tau_1 - b}{a}, \frac{\tau_2 - b}{a} \right) \geq (\leq) a m_{X,1}\left(\frac{\tau_1 - b}{a}, \frac{\tau_2 - b}{a} \right).
	\end{equation}
	Since it can be easily shown that
	$$ \textstyle m_{aX+b,1}(\tau_1, \tau_2) = a m_{X,1}\left(\frac{\tau_1 - b}{a}, \frac{\tau_2 - b}{a} \right), $$
	then from Proposition 1 and (48) we get that 
	$$ \textstyle H(aX+b ;\tau_1,\tau_2 ) \leq (\geq) m_{aX+b,1}(\tau_1,\tau_2), $$
	and hence the result follows from Theorem 9.
\end{proof}

From Corollary 9, we conclude that the ICRIE (DCRIE) class is closed under increasing linear transformations. Hence, it is interesting to examine whether this also holds for any increasing transformation. In Theorem 10, it is shown a more general result for the DCRIE but under an increasing convex transformation. More precisely, it is shown that the DCRIE class is closed under increasing convex transformations. For this, first we need the following proposition in which we give an equivalent condition to that of Theorem 9 in order for the random variable $X$ to be ICRIE (DCRIE).

\begin{proposition}
	Let $X$ be a non-negative absolutely continuous random variable. The random variable $X$ is ICRIE (DCRIE), if and only if for all $(\tau_1,\tau_2)\in D_X$
	\begin{equation}
		\textstyle \int_{\tau_1}^{\tau_2} \frac{\overline{F}_X(x) - \overline{F}_X(\tau_2)}{\overline{F}_X(\tau_1) - \overline{F}_X(\tau_2)} \left\{ \ln \left( \frac{\overline{F}_X(x) - \overline{F}_X(\tau_2)}{\overline{F}_X(\tau_1) - \overline{F}_X(\tau_2)} \right) + 1 \right\} dx \leq (\geq) 0.
	\end{equation}
\end{proposition}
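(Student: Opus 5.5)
Via Theorem 9, the characterization of ICRIE (DCRIE) reduces to rewriting the difference $H(X;\tau_1,\tau_2)-m_{X,1}(\tau_1,\tau_2)$ in integral form. Theorem 9 says that $X$ is ICRIE (DCRIE) if and only if $H(X;\tau_1,\tau_2)\geq(\leq)\, m_{X,1}(\tau_1,\tau_2)$ for all $(\tau_1,\tau_2)\in D_X$. The plan is to express both quantities as integrals over $[\tau_1,\tau_2]$ of functions of the single quantity $\overline{F}_{X_{\tau_1,\tau_2}}(x)$, and then read the inequality in (49) off directly.

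First, by Definition 1 (relation (5)),
$$\textstyle H(X;\tau_1,\tau_2)=-\int_{\tau_1}^{\tau_2}\overline{F}_{X_{\tau_1,\tau_2}}(x)\ln \overline{F}_{X_{\tau_1,\tau_2}}(x)\,dx .$$
By (8),
$$\textstyle m_{X,1}(\tau_1,\tau_2)=\int_{\tau_1}^{\tau_2}\overline{F}_{X_{\tau_1,\tau_2}}(x)\,dx .$$
Subtracting the second from the first gives
$$\textstyle H(X;\tau_1,\tau_2)-m_{X,1}(\tau_1,\tau_2)=-\int_{\tau_1}^{\tau_2}\overline{F}_{X_{\tau_1,\tau_2}}(x)\left\{\ln \overline{F}_{X_{\tau_1,\tau_2}}(x)+1\right\}dx .$$
Hence $H\geq(\leq)\,m_{X,1}$ holds precisely when the integral on the left-hand side of (49) is $\leq(\geq)\,0$. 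Combining this equivalence with Theorem 9 yields the proposition, including the correct matching of ICRIE with "$\leq 0$" and DCRIE with "$\geq 0$".

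There is no real obstacle here. The only point needing care is that both integrals are finite, so that the subtraction is legitimate. On the bounded interval $[\tau_1,\tau_2]$ this is automatic, since $0\leq -u\ln u\leq e^{-1}$ for $u\in(0,1]$ and $m_{X,1}(\tau_1,\tau_2)\leq \tau_2-\tau_1$. If $\tau_2=\infty$ is allowed, I would add the standing assumption $m_{X,1}<\infty$, which the paper implicitly uses throughout, so that $H$ is finite as well.
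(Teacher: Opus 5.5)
Your argument is correct and is essentially the paper's proof: both combine Theorem 9 with representation (8), $m_{X,1}(\tau_1,\tau_2)=\int_{\tau_1}^{\tau_2}\overline{F}_{X_{\tau_1,\tau_2}}(x)\,dx$, so that $H(X;\tau_1,\tau_2)-m_{X,1}(\tau_1,\tau_2)$ is minus the integral in (49), which gives ICRIE with $\leq 0$ and DCRIE with $\geq 0$. Your finiteness remark is a welcome extra check that the paper leaves implicit.
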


\begin{proof} Under the conditions of theorem, from Theorem 9 and (8) we get
	\begin{align*}
		\textstyle - \int_{\tau_1}^{\tau_2} \frac{\overline{F}_X(x) - \overline{F}_X(\tau_2)}{\overline{F}_X(\tau_1) - \overline{F}_X(\tau_2)} \ln \left( \frac{\overline{F}_X(x) - \overline{F}_X(\tau_2)}{\overline{F}_X(\tau_1) - \overline{F}_X(\tau_2)} \right) dx 
		\geq (\leq)
		\int_{\tau_1}^{\tau_2} \frac{\overline{F}_X(x) - \overline{F}_X(\tau_2)}{\overline{F}_X(\tau_1) - \overline{F}_X(\tau_2)} dx,
	\end{align*}
	from which (49) follows immediately. 
\end{proof}

\begin{theorem}
	Let $X$ be a non-negative absolutely continuous random variable, and $\varphi$ be an increasing convex function on $[0,+\infty)$ such that the derivative $\varphi'$ of $\varphi$ is continuous. If $X$ is DCRIE, then $\varphi(X)$ is also DCRIE.
\end{theorem}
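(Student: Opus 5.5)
The plan is to work with the integral characterization of Proposition 11 instead of differentiating $H$ directly. Put $Y=\varphi(X)$, fix $(\tau_1,\tau_2)\in D_Y$, and set $a=\varphi^{-1}(\tau_1)$, $b=\varphi^{-1}(\tau_2)$. Then $\overline{F}_Y(y)=\overline{F}_X(\varphi^{-1}(y))$, so with
$$ \textstyle G_{a,b}(x)=\frac{\overline{F}_X(x)-\overline{F}_X(b)}{\overline{F}_X(a)-\overline{F}_X(b)} $$
we have $\overline{F}_{Y_{\tau_1,\tau_2}}(\varphi(x))=G_{a,b}(x)$. The substitution $y=\varphi(x)$, as in the proof of Theorem 4, turns the left-hand side of (49) for $Y$ into
$$ \textstyle I_Y(\tau_1,\tau_2)=\int_a^b \varphi'(x)\,\psi_{a,b}(x)\,dx,\qquad \psi_{a,b}(x)=G_{a,b}(x)\{\ln G_{a,b}(x)+1\}. $$
By Proposition 11, $Y$ is DCRIE exactly when $I_Y\ge 0$ for every admissible pair. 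The hypothesis that $X$ is DCRIE gives $\int_s^t\psi_{s,t}(x)\,dx\ge 0$ for all $(s,t)\in D_X$. The task is therefore to transfer nonnegativity from the unweighted integral to the integral weighted by $\varphi'$.

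The key step is an Abel (integration-by-parts) decomposition using that $\varphi'$ is nonnegative, continuous and nondecreasing. Write $\varphi'(x)=\varphi'(a)+\int_a^x d\varphi'(t)$ and apply Fubini:
$$ \textstyle I_Y=\varphi'(a)\int_a^b\psi_{a,b}(x)\,dx+\int_a^b\Big(\int_t^b\psi_{a,b}(x)\,dx\Big)\,d\varphi'(t). $$
The first term is nonnegative by DCRIE of $X$ at $(a,b)$, since $\varphi'(a)\ge 0$. It then suffices to show that each tail integral $T(t)=\int_t^b\psi_{a,b}(x)\,dx$ is nonnegative for $t\in[a,b]$, because $d\varphi'\ge 0$ by convexity.

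For the tail, I would rescale to the subinterval $[t,b]$. With $c=G_{a,b}(t)\in(0,1]$ we have $G_{a,b}=c\,G_{t,b}$ on $[t,b]$. Using (8), this gives
$$ \textstyle T(t)=c\Big[\int_t^b\psi_{t,b}(x)\,dx+m_{X,1}(t,b)\ln c\Big]. $$
The first bracketed term is $\ge 0$ by DCRIE at $(t,b)$, but the second is $\le 0$. Controlling it is the main obstacle.

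To handle it, I would first try the derivative identity (43), which ties the excess $m_{X,1}(t,b)-H(X;t,b)=\int_t^b\psi_{t,b}$ to $-\partial H/\partial t$. The aim is to show that this excess, weighted by $c=G_{a,b}(t)$, is at least $c\,m_{X,1}(t,b)|\ln c|$, for example by integrating (43) in $t$ from $a$. If that fails, I would fall back on the representation of Theorem 1 for $Y$: write $H(Y;\tau_1,\tau_2)=\mathbb{E}[m_{Y,1}(Y,\tau_2)\mid\tau_1\le Y\le\tau_2]$ with $m_{Y,1}(\varphi(x),\varphi(b))=\int_x^b\varphi'(z)G_{x,b}(z)\,dz$. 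I would then compare with $m_{Y,1}(\tau_1,\tau_2)$ via Theorem 9, again splitting the weight $\varphi'$ by the same Abel argument.

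I am not certain either route closes. The single-crossing shape of $\psi$ (positive while $G>e^{-1}$, negative afterwards) means that an increasing weight puts more mass on the negative part. The proof must therefore use DCRIE on all subintervals $[t,b]$, not just at $(a,b)$, and exactly how to do so is the crux.
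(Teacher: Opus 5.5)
Your reduction to $I_Y=\int_a^b\varphi'\psi_{a,b}\,dx$ and your Abel identity are correct. The gap you flag cannot be closed, however, because the statement itself is false.

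First, the sufficient condition ``$T(t)\ge 0$ for all $t\in[a,b]$'' never holds. $G_{a,b}$ decreases continuously from $1$ to $0$, so some $t$ satisfy $0<G_{a,b}(t)<e^{-1}$. For such $t$, $\psi_{a,b}=G_{a,b}(\ln G_{a,b}+1)\le 0$ on $[t,b]$, strictly near $t$, so $T(t)<0$ for every $X$ and every $(a,b)$. Your rescaling also shows that DCRIE on subintervals cannot compensate. Since $\int_t^b\psi_{t,b}\,dx=m_{X,1}(t,b)-H(X;t,b)\le m_{X,1}(t,b)$, you get $T(t)\le c\,m_{X,1}(t,b)(1+\ln c)<0$ once $c<e^{-1}$. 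The paper's proof rests on the same transfer: it cites Corollary 7.2 of Barlow and Proschan with only $\int_a^b\psi_{a,b}\,dx\ge 0$ as input. A lemma of that type with a nonnegative nondecreasing weight needs exactly your tail condition, so the published argument has the same hole.

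Here is a counterexample. Let $X$ be uniform on $[0,1]$; it is DCRIE, since $H(X;\tau_1,\tau_2)=(\tau_2-\tau_1)/4$ (Example 2). Take $\varphi(x)=x^3$ and $Y=X^3$, with $(\tau_1,\tau_2)=(0,1)$. Then $\int_0^1(1-x)\{\ln(1-x)+1\}\,dx=\frac{1}{4}>0$, yet $I_Y(0,1)=\int_0^1 3x^2(1-x)\{\ln(1-x)+1\}\,dx=\frac{1}{4}-\frac{13}{48}=-\frac{1}{48}$. Equivalently, $H(Y;0,1)=\frac{13}{48}>\frac{1}{4}=m_{Y,1}(0,1)$. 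Explicitly, with $L=1-\tau_1^{1/3}$ one finds $H(Y;\tau_1,1)=\frac{3L}{4}-\frac{2L^2}{3}+\frac{3L^3}{16}$, whose $L$-derivative at $L=1$ is $-\frac{1}{48}$. Hence $H(Y;\tau_1,1)$ strictly increases in $\tau_1$ near $0$, and $Y$ is not DCRIE.

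A more robust instance: $X\sim\mathrm{Exp}(1)$ is DCRIE by Proposition 10, and $\varphi(x)=x^2$ gives $\mathcal{E}(Y;1)=6>4=m_Y(1)$. Hence $H(Y;1,\tau_2)>m_{Y,1}(1,\tau_2)$ for large $\tau_2$, and (43) gives $\partial H/\partial\tau_1>0$.

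What your Abel identity does prove is the mirror statement. Because $\psi_{a,b}$ is positive and then negative, $\int_a^b\psi_{a,b}\,dx\ge 0$ forces every head integral $\int_a^t\psi_{a,b}\,dx\ge 0$. Expanding $\varphi'$ from the right endpoint $b$ then shows that DCRIE is preserved by increasing \emph{concave} $\varphi$. Dually, ICRIE is preserved by increasing convex $\varphi$.
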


\begin{proof} Suppose that $X$ is DCRIE. Then, from (49) it follows that it also holds
	\begin{equation*}
			\textstyle \int_{\varphi^{-1}(\tau_1)}^{\varphi^{-1}(\tau_2)} 
			\frac{\overline{F}_X(x) - \overline{F}_X(\varphi^{-1}(\tau_2))}{\overline{F}_X(\varphi^{-1}(\tau_1)) - \overline{F}_X(\varphi^{-1}(\tau_2))}
			\left\{\ln \left(\frac{\overline{F}_X(x) - \overline{F}_X(\varphi^{-1}(\tau_2))}{\overline{F}_X(\varphi^{-1}(\tau_1)) - \overline{F}_X(\varphi^{-1}(\tau_2))}\right) + 1
			\right\} dx \geq 0.
	\end{equation*}
	Since $\varphi(x)$ is increasing convex function implying that $\varphi'(x)$ is non-negative increasing, from the above relationship and Corollary 7.2 (p. 121) of Barlow and Proschan \cite{Barlow1975} we have that
	\begin{equation}
			\textstyle \int_{\varphi^{-1}(\tau_1)}^{\varphi^{-1}(\tau_2)} 
			\varphi'(x) \frac{\overline{F}_X(x) - \overline{F}_X(\varphi^{-1}(\tau_2))}{\overline{F}_X(\varphi^{-1}(\tau_1)) - \overline{F}_X(\varphi^{-1}(\tau_2))}
			\left\{
			\ln \left(
			\frac{\overline{F}_X(x) - \overline{F}_X(\varphi^{-1}(\tau_2))}{\overline{F}_X(\varphi^{-1}(\tau_1)) - \overline{F}_X(\varphi^{-1}(\tau_2))}
			\right) + 1
			\right\} dx \geq 0.
	\end{equation}
	By setting $Y = \varphi(X)$, we obtain
	\begin{align*}
		& \textstyle \quad \int_{\tau_1}^{\tau_2} \frac{\overline{F}_Y(y) - \overline{F}_Y(\tau_2)}{\overline{F}_Y(\tau_1) - \overline{F}_Y(\tau_2)} 
		\left\{ \ln\left( \frac{\overline{F}_Y(y) - \overline{F}_Y(\tau_2)}{\overline{F}_Y(\tau_1) - \overline{F}_Y(\tau_2)} \right) + 1 \right\} dy \\
		& \textstyle \qquad \qquad = \int_{\varphi^{-1}(\tau_1)}^{\varphi^{-1}(\tau_2)} \frac{\overline{F}_X(\varphi^{-1}(y)) - \overline{F}_X(\varphi^{-1}(\tau_2))}{\overline{F}_X(\varphi^{-1}(\tau_1)) - \overline{F}_X(\varphi^{-1}(\tau_2))} 
		\left\{ \ln\left( \frac{\overline{F}_X(\varphi^{-1}(y)) - \overline{F}_X(\varphi^{-1}(\tau_2))}{\overline{F}_X(\varphi^{-1}(\tau_1)) - \overline{F}_X(\varphi^{-1}(\tau_2))} \right) + 1 \right\} dy \\
		& \textstyle \qquad \qquad = \int_{\varphi^{-1}(\tau_1)}^{\varphi^{-1}(\tau_2)} 
		\frac{\varphi'(x)\left( \overline{F}_X(x) - \overline{F}_X(\varphi^{-1}(\tau_2)) \right)}{\overline{F}_X(\varphi^{-1}(\tau_1)) - \overline{F}_X(\varphi^{-1}(\tau_2))} 
		\left\{ \ln\left( \frac{\overline{F}_X(x) - \overline{F}_X(\varphi^{-1}(\tau_2))}{\overline{F}_X(\varphi^{-1}(\tau_1)) - \overline{F}_X(\varphi^{-1}(\tau_2))} \right) + 1 \right\} dx \\
		& \textstyle \qquad \qquad \geq 0,
	\end{align*}
	where the last inequality follows from (50). Therefore, from (49) we get that the random variable $Y$ is DCRIE, which completes the proof.
\end{proof}

\begin{theorem}
	Let $X$ be a non-negative absolutely continuous random variable. If the random variable $X$ is ICRIE (DCRIE), then, for all $(\tau_1,\tau_2) \in D_X$
	\begin{equation}
		H(X; \tau_1, \tau_2) \leq (\geq) \frac{1}{h_{X,1}(\tau_1, \tau_2)}.
	\end{equation}
\end{theorem}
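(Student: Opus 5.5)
The plan is to reduce the inequality to a comparison of $\tau_1$-derivatives, using the two differential identities already available: (43) and the relation (9) between $h_{X,1}$ and $m_{X,1}$. I have not found a way to close the key step under the ICRIE (DCRIE) hypothesis alone, so that step is flagged below as a gap, not claimed.

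\emph{Step 1 (differential identity).} Fix $\tau_2$ and write $h=h_{X,1}(\tau_1,\tau_2)$, $m=m_{X,1}(\tau_1,\tau_2)$ and $H=H(X;\tau_1,\tau_2)$. Identity (43) in the proof of Theorem 9 gives $\partial H/\partial\tau_1=h\,(H-m)$. Relation (9) gives $h\,m=1+\partial m/\partial\tau_1$. Substituting the second into the first yields
\[
\frac{\partial H}{\partial \tau_1}=h\,H-1-\frac{\partial m_{X,1}(\tau_1,\tau_2)}{\partial \tau_1}.
\]
Since $h>0$ on $D_X$, the target $H\le 1/h$ (resp. $\ge$) is equivalent to $hH-1\le 0$ (resp. $\ge 0$). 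By the display, this is equivalent to
\[
\frac{\partial H}{\partial \tau_1}\le(\ge)\ \frac{\partial m_{X,1}}{\partial \tau_1}.
\]
So the problem becomes comparing the rates of change of $H$ and $m_{X,1}$ in $\tau_1$.

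\emph{Step 2 (what the hypothesis gives).} By Theorem 9, ICRIE (DCRIE) is exactly $H\ge(\le)\,m$, equivalently $\partial H/\partial\tau_1\ge(\le)\,0$. For the ICRIE case I would try to show $\partial H/\partial\tau_1\le\partial m/\partial\tau_1$ in two ways:
\begin{itemize}
\item by bounding $H-m$ from above using Proposition 11, the integral criterion (49);
\item by differentiating the representation of Theorem 1, $H\,[\overline F_X(\tau_1)-\overline F_X(\tau_2)]=\int_{\tau_1}^{\tau_2}m_{X,1}(x,\tau_2)f_X(x)\,dx$, and comparing term by term with $m\,[\overline F_X(\tau_1)-\overline F_X(\tau_2)]=\int_{\tau_1}^{\tau_2}[\overline F_X(x)-\overline F_X(\tau_2)]\,dx$.
\end{itemize}
The DCRIE case would follow by reversing every inequality.

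\emph{Step 3 (main obstacle).} Step 2 is the weak point. Neither sign of $\partial H/\partial\tau_1$, nor the inequality $H\ge m$, controls $\partial m/\partial\tau_1$ on its own. Theorem 9 only relates the level $H$ to the level $m$, whereas the target concerns $hH$ against $1$.

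As an alternative, I would integrate the linear ODE from Step 1 in $\tau_1$ with the boundary value $H\to 0$ as $\tau_1\to\tau_2$. This expresses $hH-1$ as a weighted integral of $\partial H/\partial\tau_1$ over $[\tau_1,\tau_2]$, and the sign hypothesis would then be applied inside the integral.

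If neither route closes, the statement likely needs an extra condition, such as monotonicity of $h_{X,1}(\cdot,\tau_2)$. In that case Proposition 10 gives the bound directly.
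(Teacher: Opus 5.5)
\textbf{Gap 1: sign error in Step 1.} From (43) and (9) you correctly obtain $\partial H/\partial\tau_1=h_{X,1}H-1-\partial m_{X,1}/\partial\tau_1$. This rearranges to
\[
h_{X,1}(\tau_1,\tau_2)\,H(X;\tau_1,\tau_2)-1=\frac{\partial}{\partial\tau_1}\bigl[H(X;\tau_1,\tau_2)+m_{X,1}(\tau_1,\tau_2)\bigr].
\]
So $H\le(\ge)\,1/h_{X,1}$ is equivalent to $H+m_{X,1}$ being decreasing (increasing) in $\tau_1$. It is not equivalent to $\partial H/\partial\tau_1\le(\ge)\,\partial m_{X,1}/\partial\tau_1$. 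For the uniform law below, $h_{X,1}H-1=-3/4$, while $\partial H/\partial\tau_1-\partial m_{X,1}/\partial\tau_1=1/4$.

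\textbf{Gap 2: the statement itself fails, so no version of Step 2 can close.} The corrected identity makes this visible.
\begin{itemize}
\item Both $H$ and $m_{X,1}$ are strictly positive on $D_X$. Both tend to $0$ as $\tau_1$ increases to the right end of its range, since e.g.\ $H\le(\tau_2-\tau_1)/e$ and $m_{X,1}\le\tau_2-\tau_1$. This is the boundary value you mention.
\item A positive function tending to $0$ cannot be nondecreasing. Hence no $X$ is ICRIE, and that half of the statement is vacuous.
\item The DCRIE inequality $H\ge 1/h_{X,1}$ would force $H+m_{X,1}$ to be nondecreasing in $\tau_1$. So it cannot hold on all of $D_X$ for any $X$.
\item Concretely, for $X$ uniform on $[0,b]$ you have $H=(\tau_2-\tau_1)/4$, $m_{X,1}=(\tau_2-\tau_1)/2$ and $1/h_{X,1}=\tau_2-\tau_1$. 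This $X$ is DCRIE, yet $H<1/h_{X,1}$ for every $\tau_1<\tau_2$.
\end{itemize}

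\textbf{Your fallback via Proposition 10 gives the opposite direction.} If $h_{X,1}(\cdot,\tau_2)$ is increasing, Proposition 10 gives $H\le m_{X,1}$. Lemma 2(ii) with (9) gives $h_{X,1}m_{X,1}=1+\partial m_{X,1}/\partial\tau_1\le 1$. Together these give $H\le m_{X,1}\le 1/h_{X,1}$, the reverse of the claimed DCRIE bound. A decreasing $h_{X,1}(\cdot,\tau_2)$ is impossible for finite $\tau_2$, because $\int_{\tau_1}^{x}h_{X,1}(u,\tau_2)\,du=-\ln\overline{F}_{X_{\tau_1,\tau_2}}(x)$ diverges at the right end.

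\textbf{The paper's proof does not supply a missing idea either.} It asserts $\lim_{\tau_1\to\tau_2}H(X;\tau_1,\tau_2)=1/h_{X,1}(\tau_1,\tau_2)$, then compares $H(X;\tau_1,\tau_2)$ with $H(X;\tau_2,\tau_2)$ by monotonicity. But the quotient it reduces to is exactly $m_{X,1}(\tau_1,\tau_2)\le\tau_2-\tau_1$, whose limit is $0$. The value $1/h_{X,1}(\tau_1,\tau_2)=[\overline{F}_X(\tau_1)-\overline{F}_X(\tau_2)]/f_X(\tau_1)$ is the l'H\^opital ratio of derivatives before the limit is taken. With the correct value $0$, the monotonicity argument gives $H\le 0$ under ICRIE, again showing that class is empty. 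Under DCRIE it gives only the trivial $H\ge 0$.

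So you were right not to claim a proof. What is provable is the reversed bound $H\le m_{X,1}\le 1/h_{X,1}$ whenever $m_{X,1}(\cdot,\tau_2)$ is decreasing. This follows from the argument of Corollary 7 together with (9).
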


\begin{proof}
	We have
	\begin{align*}
			& \textstyle \lim_{\tau_1 \to \tau_2} H(X;\tau_1,\tau_2) \\
			& \quad \textstyle = \lim_{\tau_1 \to \tau_2} \frac{- \int_{\tau_1}^{\tau_2} [\overline{F}_X(x) - \overline{F}_X(\tau_2)] \ln[\overline{F}_X(x) - \overline{F}_X(\tau_2)] dx + \ln[\overline{F}_X(x) - \overline{F}_X(\tau_2)] \int_{\tau_1}^{\tau_2} [\overline{F}_X(x) - \overline{F}_X(\tau_2)] dx}{\overline{F}_X(\tau_1) - \overline{F}_X(\tau_2)} \\
			& \quad \textstyle = \lim_{\tau_1 \to \tau_2} \frac{\int_{\tau_1}^{\tau_2} [\overline{F}_X(x) - \overline{F}_X(\tau_2)] dx}{\overline{F}_X(\tau_1) - \overline{F}_X(\tau_2)} \\
			& \quad \textstyle = \frac{1}{h_{X,1}(\tau_1, \tau_2)}.
	\end{align*}
	If $H(X;\tau_1,\tau_2)$ is increasing (decreasing) in $\tau_1$, then, for all $\tau_1 \leq \tau_2$ it holds $$ \textstyle H(X;\tau_1,\tau_2) \leq (\geq) H(X;\tau_2,\tau_2) = \lim_{\tau_1 \to \tau_2} H(X;\tau_1,\tau_2), $$ and hence (51) follows from the last two relationships.
\end{proof}

Using Theorems 9 and 11, a two-sided bound for $H(X;\tau_1,\tau_2)$ can be immediately obtained if $H(X;\tau_1,\tau_2)$ increases in $\tau_1$ for any fixed value of $\tau_2$. Thus, we have the following corollary.

\begin{corollary}
	Let $X$ be a non-negative absolutely continuous random variable. If the random variable $X$ is ICRIE (DCRIE), then, for all $(\tau_1,\tau_2) \in D$
	$$ \textstyle m_{X,1}(\tau_1, \tau_2) \leq (\geq) H(X;\tau_1,\tau_2) \leq (\geq) \frac{1}{h_{X,1}(\tau_1, \tau_2)}. $$
\end{corollary}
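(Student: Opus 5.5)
This corollary is obtained by placing the two one-sided bounds already proved side by side; no new estimate is required. Fix $(\tau_1,\tau_2)\in D_X$ and suppose $X$ is ICRIE, so that $H(X;\tau_1,\tau_2)$ is increasing in $\tau_1$ for fixed $\tau_2$.

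The lower bound comes from Theorem 9. Its characterization states that $X$ is ICRIE if and only if $H(X;\tau_1,\tau_2)\geq m_{X,1}(\tau_1,\tau_2)$ for all $(\tau_1,\tau_2)$. This can also be read directly from identity (43),
$$\frac{\partial H(X;\tau_1,\tau_2)}{\partial \tau_1}=h_{X,1}(\tau_1,\tau_2)\{H(X;\tau_1,\tau_2)-m_{X,1}(\tau_1,\tau_2)\}.$$
Since $h_{X,1}>0$ on $D_X$, the derivative is nonnegative exactly when $H\ge m_{X,1}$.

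The upper bound comes from Theorem 11, which gives $H(X;\tau_1,\tau_2)\le 1/h_{X,1}(\tau_1,\tau_2)$ for ICRIE variables. Combining the two yields
$$m_{X,1}(\tau_1,\tau_2)\le H(X;\tau_1,\tau_2)\le \frac{1}{h_{X,1}(\tau_1,\tau_2)}.$$
The DCRIE case is identical with all inequalities reversed, using the reversed versions of Theorems 9 and 11.

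The only point that deserves care is the limiting argument behind Theorem 11, which we are taking as given. It compares $H(X;\tau_1,\tau_2)$ with its limit as $\tau_1\to\tau_2$, and that limit should be read as a local quantity at the endpoint. As a consistency check, note that (9) gives $h_{X,1}m_{X,1}=1+\partial m_{X,1}/\partial\tau_1$. Hence the two bounds are compatible, that is $m_{X,1}\le 1/h_{X,1}$, precisely when $m_{X,1}$ is non-increasing in $\tau_1$. This matches the monotonicity regimes in Corollary 7 and Proposition 10. Apart from this remark, the proof is purely a citation of Theorems 9 and 11.
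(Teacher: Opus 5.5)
You follow the paper's route exactly, citing Theorem 9 for the $m_{X,1}$ side and Theorem 11 for the $1/h_{X,1}$ side. The step you take on faith is where the argument breaks: Theorem 11 is false in its DCRIE form and vacuous in its ICRIE form, and the statement inherits both defects. Since $0\le -u\ln u\le e^{-1}$ for $u\in[0,1]$, Definition 1 gives $0\le H(X;\tau_1,\tau_2)\le(\tau_2-\tau_1)/e$, so $H(X;\tau_1,\tau_2)\to 0$ as $\tau_1\uparrow\tau_2$. The proof of Theorem 11 applies L'H\^opital's rule to $m_{X,1}(\tau_1,\tau_2)$ and obtains the ratio of derivatives $[\overline{F}_X(\tau_1)-\overline{F}_X(\tau_2)]/f_X(\tau_1)=1/h_{X,1}(\tau_1,\tau_2)$. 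It then never actually lets $\tau_1\to\tau_2$, and a limit in $\tau_1$ cannot be a nonconstant function of $\tau_1$. With the correct limit $0$, the monotonicity comparison gives only $H\le 0$ for ICRIE. Because $H>0$ on $D_X$, no $X$ is ICRIE for finite $\tau_2$, so that half is vacuous. For DCRIE it gives only the trivial bound $H\ge 0$.

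For a concrete counterexample, take $X$ uniform on $[0,b]$ with $0\le\tau_1<\tau_2\le b$. The paper's own example gives $H(X;\tau_1,\tau_2)=(\tau_2-\tau_1)/4$, which is decreasing in $\tau_1$, so $X$ is DCRIE. However, $m_{X,1}(\tau_1,\tau_2)=(\tau_2-\tau_1)/2$ and $1/h_{X,1}(\tau_1,\tau_2)=\tau_2-\tau_1$. After dividing by $\tau_2-\tau_1$, the asserted chain $m_{X,1}\ge H\ge 1/h_{X,1}$ reads $\tfrac12\ge\tfrac14\ge 1$. Table 1 shows the same failure for $\mathrm{Exp}(1)$ at $(9,10)$: $H\approx 0.2565<1-e^{-1}=1/h_{X,1}(9,10)$. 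Allowing $\tau_2=\infty$ does not rescue the ICRIE half either. For the Lomax law with $\alpha>1$, $\mathcal{E}(X;t)=\alpha(\lambda+t)/(\alpha-1)^2$ is increasing, yet it exceeds $1/h_X(t)=(\lambda+t)/\alpha$.

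Your consistency check already detects the problem, but you read it backwards. By (9), $m_{X,1}\le 1/h_{X,1}$ exactly when $\partial m_{X,1}/\partial\tau_1\le 0$. Corollary 7 produces ICRIE from an \emph{increasing} $m_{X,1}$, which forces $m_{X,1}\ge 1/h_{X,1}$. That is the opposite of what the chain $m_{X,1}\le H\le 1/h_{X,1}$ requires. Symmetrically, DCRIE obtained from a decreasing $m_{X,1}$ gives $m_{X,1}<1/h_{X,1}$ wherever $\partial m_{X,1}/\partial\tau_1<0$, which contradicts $m_{X,1}\ge H\ge 1/h_{X,1}$.

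So no proof of this statement can exist. What survives is Theorem 9, and your derivation of it from (43) is fine. In addition, when $m_{X,1}$ is non-increasing in $\tau_1$, you get the same-side chain $H\le m_{X,1}\le 1/h_{X,1}$.
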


\end{document}